\newcommand{\subseteqRotated}{\mathrel{\reflectbox{\rotatebox[origin=c]{270}{$\subseteq$}}}}
\newcommand{\subseteqRotatedUp}{\mathrel{\reflectbox{\rotatebox[origin=c]{90}{$\subseteq$}}}}
\newcommand{\eqRotated}{\mathrel{\reflectbox{\rotatebox[origin=c]{270}{$=$}}}}
\DeclareMathOperator{\locdim}{locdim}
\DeclareMathOperator{\Hom}{Hom}
\DeclareMathOperator{\Sep}{Sep}
\DeclareMathOperator{\rr}{rr}
\DeclareMathOperator{\xrr}{xrr}
\DeclareMathOperator{\Ext}{Ext}
\newcommand{\In}[0]{$\mathrm{I}_n$}
\newcommand{\Iinf}[0]{$\mathrm{I}_\infty$}
\newcommand{\IIone}[0]{$\mathrm{II}_1$}
\newcommand{\IIinf}[0]{$\mathrm{II}_\infty$}
\newcommand{\III}[0]{$\mathrm{III}$}
\newcommand{\andSep}{\,\,\,\text{ and }\,\,\,}
\newcommand{\Bdd}{\mathcal{B}}
\newcommand{\Cpct}{\mathcal{K}}
\newcommand{\Calk}{Q}
\newcommand{\tensMax}[0]{\otimes_{\mathrm{max}}}
\newcommand{\ZZ}{{\mathbb{Z}}}
\newcommand{\sa}{\text{sa}}
\newcommand{\NN}{{\mathbb{N}}}
\newcommand{\ca}{$C^*$-algebra}
\numberwithin{equation}{section}
\newtheorem{lma}{Lemma}[section]
\newaliascnt{thmCt}{lma}
\newtheorem{thm}[thmCt]{Theorem}
\newaliascnt{corCt}{lma}
\newtheorem{cor}[corCt]{Corollary}
\newaliascnt{prpCt}{lma}
\newtheorem{prp}[prpCt]{Proposition}
\theoremstyle{definition}
\newaliascnt{pgrCt}{lma}
\newtheorem{pgr}[pgrCt]{}
\newaliascnt{dfnCt}{lma}
\newtheorem{dfn}[dfnCt]{Definition}
\newaliascnt{rmkCt}{lma}
\newtheorem{rmk}[rmkCt]{Remark}
\newaliascnt{qstCt}{lma}
\newtheorem{qst}[qstCt]{Question}
\newaliascnt{exaCt}{lma}
\newtheorem{exa}[exaCt]{Example}
\newcounter{theoremintro}
\newaliascnt{thmIntroCt}{theoremintro}
\newtheorem{thmIntro}[thmIntroCt]{Theorem}
\newaliascnt{exaIntroCt}{theoremintro}
\newtheorem{exaIntro}[exaIntroCt]{Example}
\newaliascnt{qstIntroCt}{theoremintro}
\newtheorem{qstIntro}[qstIntroCt]{Question}
\newaliascnt{pbmIntroCt}{theoremintro}
\newtheorem{pbmIntro}[pbmIntroCt]{Problem}
\title{Real rank of some multiplier algebras}
\author{Hannes Thiel}
\address{Hannes~Thiel, 
Department of Mathematical Sciences, Chalmers University of Technology and University of
Gothenburg, Gothenburg SE-412 96, Sweden.}
\email{hannes.thiel@chalmers.se}
\urladdr{www.hannesthiel.org}
\subjclass[2010]%
{Primary
46L05, 
46L85; 
Secondary
46L80, 
46M20. 
}
\keywords{$C^*$-algebras, dimension theory, real rank, multiplier algebras}
\thanks{
The author was partially supported by the Knut and Alice Wallenberg Foundation (KAW 2021.0140).
}
\date{\today}
\begin{document}

\begin{abstract}
We show that there exists a separable, nuclear \ca{} with real rank zero and trivial $K$-theory such that its multiplier and corona algebra have real rank one.
This disproves two conjectures of Brown and Pedersen.

We also compute the real rank of the stable multiplier algebra and the stable corona algebra of countably decomposable type~\Iinf{} and type~\IIinf{} factors. 
Together with results of Zhang this completes the computation of the real rank for stable multiplier and corona algebras of countably decomposable factors.
\end{abstract}

\maketitle

\section{Introduction}

The real rank is a noncommutative dimension theory that was introduced by Brown and Pedersen in \cite{BroPed91CAlgRR0}.
It assigns a number $\rr(A) \in \{0,1,\ldots,\infty\}$ to each \ca{} $A$, and it is considered a noncommutative generalization of covering dimension since for a compact, Hausdorff space $X$, the real rank of $C(X)$ agrees with $\dim(X)$, the covering dimension of $X$.

More accurately, the real rank should be thought of as a noncommutative generalization of \emph{local covering dimension}, since for a locally compact, Hausdorff space~$X$, the real rank of $C_0(X)$ agrees with $\locdim(X)$, which is equal to the supremum of the covering dimension of all compact subsets of $X$;
see \cite[Section~2.2(ii)]{BroPed09Limits}.
For $\sigma$-compact, locally compact, Hausdorff spaces, the local covering dimension agrees with the covering dimension (\cite[Proposition~5.3.4]{Pea75DimThy}), but in general the covering dimension can be strictly larger than the local covering dimension.

A major theme of research about noncommutative dimension theories in general, and the real rank in particular, is to determine to what extend results about the covering dimension of topological spaces can be generalized to the noncommutative setting.
In this paper, we consider the noncommutative analog of the classical result that for a $\sigma$-compact, locally compact, Hausdorff space $X$, the (local) covering dimension of $X$ agrees with that of its Stone-\v{C}ech compactification $\beta X$;
see for example 
\cite[Proposition~6.4.3]{Pea75DimThy}.

Considering a \ca{} $A$ as a noncommutative topological space, the multiplier algebra $M(A)$ is the noncommutative analog of the Stone-\v{C}ech compactification.
One might therefore expect that the real rank of a \ca{} $A$ agrees with that of its multiplier algebra, at least when $A$ is $\sigma$-unital (the noncommutative analog of $\sigma$-compactness).
One always has $\rr(A) \leq \rr(M(A))$, but in general the real rank of $A$ can be strictly smaller than that of $M(A)$, already for separable, subhomogeneous \ca{s};
see \cite[Example~3.16(i)]{Bro16HigherRRSR}.
This raises the problem of finding necessary and sufficient conditions for the equality $\rr(A) = \rr(M(A))$.

A particularly relevant instance of this problem occurs for $\rr(A)=0$, in which case we aim to determine when the multiplier algebra $M(A)$ has real rank zero.
This problem has interesting connections to generalizations of the Weyl-von Neumann theorem and quasidiagonality, which has been extensively studied by Lin \cite{Lin91GenWeylVN1, Lin93ExpRank, Lin95GenWeylVN2} and Zhang \cite{Zha91QuasidiagInterpolMultProj, Zha92RR0CoronaMultiplier1, Zha92RR0CoronaMultiplier2, Zha90RR0CoronaMultiplier3, Zha92RR0CoronaMultiplier4}.

First, we see that $\rr(A)=0$ does not generally imply $\rr(M(A))=0$ by considering the stabilization of the Calkin algebra (\autoref{exa:StableMultCalkin}), for which we have
\[
0 = \rr(Q\otimes\Cpct) < \rr(M(Q\otimes\Cpct)) = 1.
\]
More generally, it is known that a $K$-theoretic obstruction arises:
If $\rr(M(A))=0$ and if the $K$-theory of $M(A)$ vanishes (which is for example automatic if $A$ is stable), then $K_1(A)=0$.
This includes the above example, since $K_1(Q\otimes\Cpct)\cong\ZZ \neq 0$.

This lead Brown and Pedersen to formulate the following three conjectures in \cite[Remarks~3.22]{BroPed91CAlgRR0}: 
\begin{enumerate}
\item
$\rr(M(A))=0$ for every AF-algebra $A$;
\item
$\rr(M(A))=0$ for every $A$ with $\rr(A)=0$ and $K_1(A)=0$;
\item
$\rr(M(A)/A)=0$ for every $A$ with $\rr(A)=0$.
\end{enumerate}

The initial evidence for the first conjecture was a verification for matroid algebras (a special class of AF-algebras) by Brown-Pedersen \cite[Theorem~3.21]{BroPed91CAlgRR0} and Higson-R{\o}rdam \cite[Theorem~4.4]{HigRor91WeylVNMultSomeAF}.
Soon after, Lin showed that $\rr(M(A))=0$ whenever $A$ is a $\sigma$-unital \ca{} with real rank zero, stable rank one and $K_1(A)=0$;
see \cite[Theorem~10]{Lin93ExpRank}.
Since AF-algebras have these properties, this in particular confirmed the first conjecture.
It also verified the second conjecture under the additional assumption of stable rank one (and $\sigma$-unitality).

The third conjecture has been confirmed by Lin for $\sigma$-unital, simple \ca{s} with real rank zero and stable rank one (\cite[Theorem~15]{Lin93ExpRank}), and by Zhang for $\sigma$-unital, simple, purely infinite \ca{s} (\cite[Corollary 2.6(i)]{Zha92RR0CoronaMultiplier1}).
Note that the second and third conjectures are closely related:
By considering the extension
\[
0 \to A \to M(A) \to M(A)/A \to 0
\]
we see that $\rr(M(A))=0$ if and only if $\rr(A)=\rr(M(A)/A)=0$ and the index map $K_0(M(A)/A) \to K_1(A)$ vanishes;
see \cite[Proposition~4]{LinRor95ExtLimitCircle}.
This shows that the third conjecture is stronger than the second, and that both conjectures are equivalent if $A$ is stable.

\medskip

In this paper, we settle the second and third Brown-Pedersen conjectures negatively by exhibiting several counterexamples.
This also solves Problem~13 in \cite{Zha07OpenProbMult}.

\begin{exaIntro}[\ref{exa:SeparableBPCounterexample}]
There exists a separable, nuclear \ca{} $A$ with real rank zero, with trivial $K$-theory and such that
\[
\rr(M(A)) = \rr(M(A)/A) = 1.
\]
\end{exaIntro}

Another unexpectedly easy counterexample is given by the countably decomposable type~\Iinf{} factor, that is, the algebra of bounded operators on a separable, infinite-dimensional Hilbert space.
Further examples are given by countably decomposable type~\IIinf{} factors.
In both cases, the stable multiplier and corona algebra have real rank one.
Together with Zhang's results for type~\IIone{} factors (\cite[Example~2.11]{Zha92RR0CoronaMultiplier1}) and for countably decomposable type~\III~factors (\cite[Examples~2.7(iv)]{Zha92RR0CoronaMultiplier1}, see also \autoref{exa:StableMultIII}), this completes the computation of the real rank for stable multiplier and corona algebras of countably decomposable factors:

\begin{thmIntro}[{\ref{prp:rrStableMultBdd}}, \ref{prp:rrStableMultIIinf}]
\label{prp:rrMultFactor}
Let $N$ be a countably decomposable factor.

If $N$ has type~\Iinf{} or type~\IIinf{}, then 
\[
\rr( M(N\otimes\Cpct) ) 
= \rr( M(N\otimes\Cpct)/(N\otimes\Cpct) ) 
= 1.
\]

If $N$ has type~\IIone{} or type~\III, then
\[
\rr( M(N\otimes\Cpct) ) 
= \rr( M(N\otimes\Cpct)/(N\otimes\Cpct) ) 
= 0.
\]
\end{thmIntro}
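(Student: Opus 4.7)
The theorem aggregates four cases. For types \IIone{} and \III{}, the two equalities $\rr(M(N\otimes\Cpct))=\rr(M(N\otimes\Cpct)/(N\otimes\Cpct))=0$ are supplied by the cited results of Zhang: \cite[Example~2.11]{Zha92RR0CoronaMultiplier1} and \cite[Examples~2.7(iv)]{Zha92RR0CoronaMultiplier1} (the latter is re-derived in \autoref{exa:StableMultIII}). So the plan reduces to establishing the type \Iinf{} and \IIinf{} cases, which are the contents of \autoref{prp:rrStableMultBdd} and \autoref{prp:rrStableMultIIinf}.

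In both remaining cases $A := N\otimes\Cpct$ is $\sigma$-unital with $\rr(A)=0$ and $K_1(A)=0$, yet the claim is that its multiplier and corona both have real rank $1$. Since $\rr(B)\leq\rr(A)$ for any surjection $A\to B$ (lift self-adjoints, perturb, push forward), both asserted equalities follow from showing $\rr(M(A))\leq 1$ and $\rr(M(A)/A)\geq 1$; these I would address separately.

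For the upper bound I would first handle the type \Iinf{} case ($N=\Bdd(H)$) by identifying $M(\Bdd(H)\otimes\Cpct)$ with the adjointable operators on the standard Hilbert $\Bdd(H)$-module $\ell^2(\Bdd(H))$ and using its matrix representation. Given self-adjoints $(a_0,a_1)$ in the multiplier algebra, a Weyl--von Neumann--Berg-type diagonalization modulo $\Bdd(H)\otimes\Cpct$ reduces to a block-diagonal problem; one then applies $\rr(\Bdd(H))=0$ block by block and adds an $\epsilon$-multiple of the complementary projection in $M(A)$ to obtain $(b_0,b_1)$ with $b_0^2+b_1^2$ uniformly bounded below. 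The type \IIinf{} case is handled in parallel after writing $N\cong N_0\mathbin{\bar\otimes}\Bdd(H)$ for a countably decomposable type \IIone{} factor $N_0$, reducing the matrix analysis to the \Iinf{} computations.

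For the lower bound I would exhibit an explicit self-adjoint in the corona with no invertible approximant. The embedding $1\otimes\Bdd(\ell^2)\hookrightarrow M(N\otimes\Cpct)$ descends, via the slice-map identification $(1\otimes\Bdd(\ell^2))\cap(N\otimes\Cpct)=1\otimes\Cpct$, to an isometric unital embedding $1\otimes\Calk\hookrightarrow M(N\otimes\Cpct)/(N\otimes\Cpct)$, and a natural witness is $1\otimes(u+u^*)$ for a unitary $u\in\Calk$ with $[u]\neq 0$ in $K_1(\Calk)$. Since $1\otimes\Calk$ is not a hereditary subalgebra of the corona, one cannot simply cite $\rr(\Calk)\geq 1$; instead, one must pull the Calkin-algebra obstruction back through the embedding, for instance via a conditional expectation onto $1\otimes\Calk$ or through compression to a corner. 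Making this transfer rigorous is the main technical obstacle, precisely because the direct $K_1$-index argument used in \autoref{exa:StableMultCalkin} is unavailable when $K_1(A)=0$.
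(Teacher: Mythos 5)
Your reduction of the theorem to the type~\Iinf{} and~\IIinf{} cases, and your observation that it suffices to prove $\rr(M(A))\leq 1$ and $\rr(M(A)/A)\geq 1$ for $A=N\otimes\Cpct$, are both fine, and the \IIone{}/\III{} cases are indeed just citations of Zhang. The genuine gap is exactly where you flag it: the lower bound. Your plan embeds $1\otimes\Calk$ into the corona and hopes to pull the $K_1$-obstruction back through a non-hereditary, non-ideal inclusion; there is no conditional expectation or compression available that transports ``no invertible approximant'' along such an embedding (an element can easily become approximable by unimodular pairs after passing to a larger algebra), and as you note the index-map argument of \autoref{exa:StableMultCalkin} is blocked by $K_1(A)=0$. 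The idea you are missing is to go \emph{down} rather than \emph{up}: $N$ has a (unique, norm-closed) ideal $J$ with $N/J$ simple and purely infinite and $K_1(N/J)\neq 0$ (for type~\Iinf{} this is $\Calk$ with $K_1\cong\ZZ$; for type~\IIinf{} one computes $K_1(N/J)\cong\mathbb{R}$ from the six-term sequence). Since $N\otimes\Cpct$ is $\sigma$-unital, the quotient $N\otimes\Cpct\to (N/J)\otimes\Cpct$ induces a \emph{surjection} $M(N\otimes\Cpct)\to M((N/J)\otimes\Cpct)$ by \cite[Theorem~10]{Ped86SAW}, and real rank does not increase under quotients, so $\rr(M(N\otimes\Cpct))\geq \rr(M((N/J)\otimes\Cpct))=1$, the last equality being the Lin--R{\o}rdam index-map obstruction applied to a stable, simple, purely infinite algebra with nonvanishing $K_1$ (\autoref{prp:rrMultStableSimple}). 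The corona lower bound then follows from $\rr(M(A))=1$ together with $\rr(A)=0$, $K_1(A)=0$ and \cite[Proposition~4]{LinRor95ExtLimitCircle} (or, as in the paper, from $\xrr(A)=0$ via \autoref{prp:xrr0SufficientCond}).

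Your upper bound is also not in workable shape as stated: a pair of non-commuting self-adjoints in $M(\Bdd\otimes\Cpct)$ cannot be simultaneously block-diagonalized modulo $\Bdd\otimes\Cpct$, so the ``apply $\rr(\Bdd)=0$ block by block'' step does not launch, and the reduction of the \IIinf{} case to the \Iinf{} one via $N\cong N_0\mathbin{\bar\otimes}\Bdd$ is not justified. The paper instead gets the upper bound from the same extension $0\to J\otimes\Cpct\to N\otimes\Cpct\to (N/J)\otimes\Cpct\to 0$: the kernel of the induced surjection of multiplier algebras is a hereditary subalgebra of $M(J\otimes\Cpct)$, whose extension real rank is controlled (\autoref{prp:rrMultExtByKK} for $J=\Cpct$, \autoref{prp:rrMultExtSimpleSR1} for the \IIone{}-type ideal), yielding $\rr(M(N\otimes\Cpct))=\rr(M((N/J)\otimes\Cpct))$ in one stroke. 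So the single extension does both bounds; you should replace both halves of your argument with it.
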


This raises the problem of computing the real rank for stable multiplier and corona algebras of factors that are not countably decomposable, and more generally of arbitrary von Neumann algberas;
see \autoref{rmk:VNA}.

We ask if there are also simple counterexamples to the second and third Brown-Pedersen conjectures:

\begin{qstIntro}
Is there a \emph{simple}, separable \ca{} $A$ with $\rr(A)=0$ and $K_1(A)=0$ such that $\rr(M(A)/A) \neq 0$?
\end{qstIntro}

\subsection*{Methods}

One difficulty in computing the real rank of the stable multiplier algebras of a factor $N$ of type~\Iinf{} or type~\IIinf{} is that $N$ is not simple as a \ca{}.
To address this, we study the following problem:
Given an extension
\[
0 \to A \to E \to B \to 0
\]
of \ca{s}, how can we estimate the real rank of $M(E)$?
It is natural to assume that $E$ is $\sigma$-unital, since then the map $E \to B$ naturally induces a surjective morphism $M(E) \to M(B)$, and we obtain an extension
\[
0\to J \to M(E) \to M(B) \to 0
\]
where $J$ is a hereditary sub-\ca{} in $M(A)$;
see \autoref{pgr:MultExt} and \autoref{prp:KernelMultExt}.

In \cite{Thi23arX:RRExt}, the \emph{extension real rank}, denoted by $\xrr(\cdot)$, was introduced as a method to bound the real rank of an extension of \ca{s}.
(The main results are recalled in \autoref{sec:xrr}.)
Applying \autoref{prp:EstimateWithXRR}, we obtain
\[
\max\big\{ \rr(J), \rr(M(B)) \big\} 
\leq \rr(M(E)) 
\leq \max\big\{ \xrr(J), \rr(M(B)) \big\}.
\]

We are thus faced with the problem of computing the (extension) real rank of hereditary sub-\ca{s} of $M(A)$.

\begin{pbmIntro}
\label{pbm:xrrHerMA}
Given a \ca{} $A$, estimate the (extension) real rank of hereditary sub-\ca{s} of $M(A)$.
\end{pbmIntro}

In Sections~\ref{sec:ExtSimplePI} and~\ref{sec:ExtSimpleSR1}, we solve this problem for simple, purely infinite \ca{s}, and for certain simple \ca{s} with stable rank one.
The latter includes as a special case $A=\Cpct$, which we consider in \autoref{sec:ExtCpct}.
As a consequence, we obtain estimates for the real rank of multiplier algebras of extensions by such ideals:

\begin{thmIntro}[\ref{prp:rrMultExtByKK}, \ref{prp:rrMultExtSimplePI}, \ref{prp:rrMultExtSimpleSR1}]
Let $0 \to A \to E \to B \to 0$ be an extension of \ca{s}.
Assume that $A$ is simple, and that $E$ is $\sigma$-unital.
Assume further, that $A$ is purely infinite, or that $A$ has real rank zero, stable rank one, strict comparison of positive elements by traces, and finitely many extremal traces (normalized at some nonzero projection).
Then:
\[
\rr(M(B)) \leq \rr(M(E)) \leq \max\big\{1, \rr(M(B)) \big\}.
\]
If we additionally assume that $K_1(A)=0$, then
\[
\rr(M(E)) = \rr(M(B)).
\]
\end{thmIntro}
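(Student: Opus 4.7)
The proof is essentially a synthesis step that plugs the structural results of Sections~\ref{sec:ExtSimplePI} and~\ref{sec:ExtSimpleSR1} into the extension real rank machinery recalled in \autoref{sec:xrr}.

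First I would appeal to \autoref{prp:KernelMultExt}: since $E$ is $\sigma$-unital, the quotient map $E \to B$ extends to a surjection $M(E) \to M(B)$, and its kernel is a hereditary sub-\ca{} $J$ of $M(A)$. We thus have an extension
\[
0 \to J \to M(E) \to M(B) \to 0.
\]
The lower bound $\rr(M(B)) \leq \rr(M(E))$ is automatic, since real rank is monotone under surjective quotients.

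For the upper bound, \autoref{prp:EstimateWithXRR} gives $\rr(M(E)) \leq \max\{\xrr(J), \rr(M(B))\}$, so it suffices to bound the extension real rank of the hereditary sub-\ca{} $J \subseteq M(A)$. This is precisely \autoref{pbm:xrrHerMA}, whose solutions are obtained in Sections~\ref{sec:ExtSimplePI} (purely infinite case) and~\ref{sec:ExtSimpleSR1} (stable rank one case): under either hypothesis on $A$, every hereditary sub-\ca{} of $M(A)$ satisfies $\xrr(J) \leq 1$, which yields the first asserted inequality.

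For the sharpened statement when $K_1(A) = 0$, it suffices to strengthen the bound to $\xrr(J) = 0$. In the stable rank one case this follows from Lin's theorem \cite[Theorem~10]{Lin93ExpRank}, which gives $\rr(M(A)) = 0$ and hence $\rr(J) = 0$ (since real rank passes to hereditary sub-\ca{s}), so that $\xrr(J) = 0$; in the purely infinite case the analogous fact that $\rr(M(A)) = 0$ when $K_1(A) = 0$ should be extractable from Zhang's work in the simple purely infinite setting, giving the same conclusion. Either way the upper bound collapses to $\rr(M(E)) \leq \rr(M(B))$, and combined with the lower bound this gives the desired equality.

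The main obstacle is not in this final assembly but rather in the preceding Sections~\ref{sec:ExtSimplePI} and~\ref{sec:ExtSimpleSR1}: showing uniformly that hereditary sub-\ca{s} of $M(A)$ have extension real rank at most one requires genuinely different techniques in the two cases, and in the stable rank one case the hypotheses of strict comparison and finitely many extremal traces are crucial for controlling the tracial obstructions that arise when lifting norm-small self-adjoint elements to invertible perturbations in the multiplier algebra.
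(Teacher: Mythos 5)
Your overall architecture matches the paper's: pass to the extension $0 \to J \to M(E) \to M(B) \to 0$ via \autoref{pgr:MultExt} and \autoref{prp:KernelMultExt}, bound $\xrr(J)$ for the hereditary sub-\ca{} $J \subseteq M(A)$, and conclude with \autoref{prp:EstimateWithXRR}. However, there are two genuine gaps. First, the theorem does not assume that $A$ is separable or $\sigma$-unital (and this is essential for the applications, e.g.\ the Breuer ideal of a type~\IIinf{} factor is not $\sigma$-unital even though the factor is unital), whereas the solutions to \autoref{pbm:xrrHerMA} in Sections~\ref{sec:ExtSimplePI} and~\ref{sec:ExtSimpleSR1} are only established for $\sigma$-unital $A$ (\autoref{prp:xrrHerMultPI}, which relies on Zhang's unital-or-stable dichotomy) respectively separable $A$ (\autoref{prp:xrrHerMultTraces}). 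You cannot simply quote ``every hereditary sub-\ca{} of $M(A)$ has $\xrr \leq 1$'' for arbitrary $A$. The paper bridges this with a L{\"o}wenheim--Skolem reduction: \autoref{prp:ReduceToSeparable} and \autoref{prp:rrMultUsingClub} replace the given extension by a subextension $0 \to D\cap A \to D \to D/(D\cap A) \to 0$ with $D$ separable, chosen inside a club of separable sub-\ca{s} of $A$ that inherit the relevant properties (simplicity, pure infiniteness, real rank zero, strict comparison, boundedly many extremal traces, vanishing $K_1$). This reduction, together with verifying that the hypotheses on $A$ actually admit such a club, is a substantial part of the proofs of \autoref{prp:rrMultExtSimplePI} and \autoref{prp:rrMultExtSimpleSR1} and is entirely missing from your proposal.

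Second, your argument for the case $K_1(A)=0$ contains a false step: from $\rr(M(A))=0$ you deduce $\rr(J)=0$ and then immediately conclude $\xrr(J)=0$. Real rank zero does not imply extension real rank zero; the paper's own \autoref{exa:StableMultCalkin} gives $\rr(\Calk\otimes\Cpct)=0$ while $\xrr(\Calk\otimes\Cpct)=1$. The correct route (and the one the paper takes) is \autoref{prp:xrr0SufficientCond}, which requires \emph{three} inputs: $\xrr(J)\leq 1$, $\rr(J)=0$, and $K_1(J)=0$. The vanishing of $K_1$ for hereditary sub-\ca{s} of $M(A)$ is a nontrivial ingredient, supplied by Zhang's trivial-$K_1$-flow theorem and Lin's \cite[Theorem~9]{Lin93ExpRank} in the respective cases. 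Without the $K_1(J)=0$ input your sharpened bound $\xrr(J)=0$, and hence the equality $\rr(M(E))=\rr(M(B))$, does not follow.
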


\subsection*{Conventions}

Throughout, we let $\Cpct$ denote the \ca{} of compact operators on a separable, infinite-dimensional Hilbert space.
Further, we let $\Bdd$ denote the algebra of bounded operators on a separable, infinite-dimensional Hilbert space.
The Calkin algebra is $\Calk := \Bdd/\Cpct$.
Given a \ca{} $A$, we let $A_\sa$ and $A_+$ denote the subsets of self-adjoint and positive elements, respectively.

By an extension $0 \to A \to E \to B \to 0$ of \ca{s}, we mean that $E$ is a \ca{} containing $A$ as a closed ideal such that $B$ is isomorphic to $E/A$.

\subsection*{Acknowledgements}

The author thanks Eduard Vilalta for valuable feedback on a first version of this paper.

\section{Preliminaries on real rank of extensions}
\label{sec:xrr}

In this section, we recall estimates of the real rank of an extension of \ca{s} from \cite{Thi23arX:RRExt}.
We begin with the definition of the (extension) real rank.

Let $A$ be a unital \ca.
For $n \geq 1$, a self-adjoint tuple $(a_1,\ldots,a_n) \in A_\sa^n$ is said to be \emph{unimodular} if it generate $A$ as a left ideal.
One can show that a tuple $(a_1,\ldots,a_n) \in A_\sa^n$ is unimodular if and only if $\sum_{k=1}^n a_k^2$ is invertible.

\begin{dfn}[{Brown-Pedersen \cite{BroPed91CAlgRR0}}]
The \emph{real rank} of a unital \ca{}~$A$, denoted by $\rr(A)$, is defined as the smallest integer $n \geq 0$ such that the set of unimodular, self-adjoint $(n+1)$-tuples is dense in $A_\sa^{n+1}$.
By convention, we set $\rr(A)=\infty$ if there is no $n$ with this property.
The real rank of a nonunital \ca{} is defined as that of its minimal unitization. 
\end{dfn}

\begin{dfn}[{\cite[Defintion~3.1]{Thi23arX:RRExt}}]
\label{dfn:xrr}
Let $A$ be a \ca{}, let $n \geq 0$, and let $\pi_A \colon M(A) \to M(A)/A$ denote the quotient map.
We say that $A$ has property $(\Lambda_n)$ if for every $(a_0,\ldots,a_n) \in M(A)_\sa^{n+1}$ such that $(\pi_A(a_0),\ldots,\pi_A(a_n))$ is unimodular, and for every $\varepsilon>0$, there exists a unimodular tuple $(b_0,\ldots,b_n) \in M(A)_\sa^{n+1}$ such that
\[
\| a_0-b_0 \| < \varepsilon, \ldots, \| a_n-b_n \| < \varepsilon, \andSep
\pi_A(a_0)=\pi_A(b_0), \ldots,
\pi_A(a_n)=\pi_A(b_n).
\]

The \emph{extension real rank} of $A$, denoted by $\xrr(A)$, is the smallest integer $n \geq 0$ such that $A$ has property $(\Lambda_m)$ for all $m \geq n$.
We set $\xrr(A)=\infty$ if there is no $n$ such that $A$ has $(\Lambda_m)$ for all $m \geq n$.
\end{dfn}

\begin{thm}[{\cite[Theorem~2.4]{Thi23arX:RRExt}}]
\label{prp:EstimateBasic}
Let $0 \to A \to E \to B \to 0$ be an extension of \ca{s}.
Then
\[
\max\{ \rr(A), \rr(B) \}
\leq \rr(E)
\leq \rr(A) + \rr(B) + 1.
\]
\end{thm}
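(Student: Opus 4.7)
For the lower bounds I would argue separately. Since $A$ is an ideal of $E$ and thus a hereditary sub-C*-algebra, monotonicity of the real rank under passage to hereditary sub-C*-algebras gives $\rr(A) \leq \rr(E)$. For $\rr(B) \leq \rr(E)$, I would use that self-adjoint elements lift along surjective $*$-homomorphisms: given a self-adjoint $(k+1)$-tuple in $\widetilde{B}$, lift it to a self-adjoint tuple in $\widetilde{E}$ (by taking self-adjoint parts of small-norm lifts), perturb it to a unimodular tuple using $\rr(E) \leq k$, and observe that the image in $\widetilde{B}$ remains close to the original tuple and is unimodular since $*$-homomorphisms preserve invertibility.

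For the upper bound, set $n = \rr(A)$ and $m = \rr(B)$. Given $(x_0, \ldots, x_{n+m+1}) \in \widetilde{E}_\sa^{n+m+2}$ and $\varepsilon > 0$, the plan is a two-stage perturbation matching the split $n+m+2 = (m+1)+(n+1)$. First, using $\rr(B) = m$, I would approximate the image $(\pi(x_0), \ldots, \pi(x_m))$ in $\widetilde{B}$ by a unimodular self-adjoint $(m+1)$-tuple and lift the perturbation to $(x_0', \ldots, x_m')$ in $\widetilde{E}_\sa$ close to the originals. The element $s := \sum_{i=0}^m (x_i')^2 \in \widetilde{E}_+$ then satisfies $\pi(s) \geq \delta$ for some $\delta > 0$, so continuous functional calculus produces $a := (\delta - s)_+ \in A_+$ with $s + a \geq \delta$, hence invertible in $\widetilde{E}$.

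The second stage will use $\rr(A) = n$ on the remaining $n+1$ entries $(x_{m+1}, \ldots, x_{n+m+1})$. I would compress via $a$, producing a self-adjoint $(n+1)$-tuple $(a^{1/2} x_i a^{1/2})_{i=m+1}^{n+m+1}$ inside the hereditary sub-C*-algebra $\overline{aAa} \subseteq A$, whose real rank is at most $n$. Its unimodular approximation is then to be converted, via a cut-off argument, into small self-adjoint perturbations $d_i \in A$ of the $x_i$'s whose squared contribution dominates $a$ on its spectral support. Combined with $s \geq \delta - a$ from the first stage, this yields the full sum $s + \sum_{i=m+1}^{n+m+1}(x_i + d_i)^2$ bounded below by some $\eta > 0$, hence invertible in $\widetilde{E}$.

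The principal difficulty is in the second stage: converting an approximate unimodularity obtained inside the hereditary $\overline{aAa}$ into a controlled small-norm perturbation of the ambient self-adjoint elements $x_i \in \widetilde{E}$, while carefully tracking the cross terms $x_i d_i + d_i x_i$ produced by squaring. This is to be handled by iterated continuous functional calculus cut-offs and a scaling argument balancing the perturbation magnitude against the spectral threshold $\delta$, exploiting that $a$ and $1-a/\delta$ behave like complementary spectral projections up to an error that can be absorbed by choosing $\varepsilon$ small enough.
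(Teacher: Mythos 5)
This theorem is not proved in the paper at hand --- it is quoted from \cite{Thi23arX:RRExt}, so your argument can only be judged on its own terms. The lower bound $\rr(B)\leq\rr(E)$ is argued correctly. For $\rr(A)\leq\rr(E)$, however, you appeal to ``monotonicity of the real rank under passage to hereditary sub-\ca{s}.'' That is not an available fact: whether the real rank can increase when passing to a hereditary sub-\ca{} is an open problem (only real rank \emph{zero} is known to pass, by Brown--Pedersen). What you actually need is monotonicity for closed ideals, which is true but is itself part of the content of the cited theorem and requires a genuine argument (one must push a unimodular perturbation taken in $\widetilde{E}_\sa$ back into $\widetilde{A}_\sa$, e.g.\ via a quasicentral approximate unit); asserting it as a known permanence property is a gap.

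For the upper bound, your first stage is correct: one can arrange $\sum_{i\leq m}\pi(x_i')^2\geq\delta$, and then $a:=(\delta-s)_+$ lies in $A_+$ with $s+a\geq\delta$. But the second stage, which is where all the work of the proof lives, is not carried out, and the route you sketch has two concrete problems. First, you again invoke $\rr(\overline{aAa})\leq\rr(A)$, which is the same open hereditary-monotonicity question; one must instead work with $\widetilde{A}=A+\mathbb{C}1\subseteq\widetilde{E}$, where $\rr(\widetilde{A})=\rr(A)$ is actually available. Second, and more seriously, converting a unimodular perturbation of the compressed tuple $(a^{1/2}x_ia^{1/2})_i$ into small perturbations $d_i$ of the $x_i$ themselves is precisely the crux: unimodularity in $(\overline{aAa})^{\sim}$ is relative to an adjoined unit rather than to $1_{\widetilde{E}}$, and compression does not commute with squaring --- $(a^{1/2}xa^{1/2})^2=a^{1/2}xaxa^{1/2}$, not $a^{1/2}x^2a^{1/2}$ --- so a lower bound on $\sum_i(a^{1/2}x_ia^{1/2}+e_i)^2$ does not transfer to a lower bound on $\sum_i(x_i+d_i)^2$ over the spectral support of $a$ without a careful localization argument near the top of the spectrum of $a$. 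You explicitly defer this (``the principal difficulty\ldots is to be handled by iterated cut-offs and scaling''), so as written the upper bound is a plausible plan rather than a proof.
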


\begin{thm}[{\cite[Theorem~3.7]{Thi23arX:RRExt}}]
\label{prp:EstimateWithXRR}
Let $0 \to A \to E \to B \to 0$ be an extension of \ca{s}.
Then
\[
\max\{ \rr(A), \rr(B) \}
\leq \rr(E)
\leq \max\big\{ \xrr(A), \rr(B) \big\}.
\]
\end{thm}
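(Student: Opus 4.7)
The lower bound is already provided by \autoref{prp:EstimateBasic}, so I concentrate on the upper bound. Set $n := \max\{\xrr(A), \rr(B)\}$. The goal is to show that the unimodular self-adjoint $(n+1)$-tuples are dense in $E_\sa^{n+1}$; after extending the extension to $0 \to A \to \tilde{E} \to \tilde{B} \to 0$, one may assume $E$ (and therefore $B$) is unital, so the canonical $*$-homomorphism $\rho \colon E \to M(A)$ extending $A \hookrightarrow M(A)$ is also unital. The plan is a two-stage perturbation: first correct the image in $B$ using $\rr(B) \leq n$, and then correct within the $A$-cosets using $\xrr(A) \leq n$.

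Given $(e_0,\ldots,e_n) \in E_\sa^{n+1}$ and $\varepsilon > 0$, I apply $\rr(B) \leq n$ to $(\pi_E(e_i))_i$ to produce a unimodular $(c_0,\ldots,c_n) \in B_\sa^{n+1}$ with $\|c_i - \pi_E(e_i)\| < \varepsilon/2$, and then use norm-controlled self-adjoint lifting to obtain $(e_0',\ldots,e_n') \in E_\sa^{n+1}$ with $\pi_E(e_i') = c_i$ and $\|e_i' - e_i\| < \varepsilon/2$. Since the induced Busby invariant $\tau \colon B \to M(A)/A$ is unital and $(c_i)_i$ is unimodular in $B$, the tuple $(\tau(c_i))_i = (\pi_A(\rho(e_i')))_i$ is unimodular in $M(A)/A$; hence $(\rho(e_i'))_i \in M(A)_\sa^{n+1}$ satisfies the hypothesis of property $(\Lambda_n)$ for $M(A)$.

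Applying $(\Lambda_n)$ yields a unimodular $(m_0,\ldots,m_n) \in M(A)_\sa^{n+1}$ with $\pi_A(m_i) = \pi_A(\rho(e_i'))$ and $\|m_i - \rho(e_i')\| < \varepsilon/2$, so that $a_i := m_i - \rho(e_i')$ lies in $A_\sa \subseteq E$. Setting $e_i'' := e_i' + a_i \in E_\sa$, one obtains $\rho(e_i'') = m_i$, $\pi_E(e_i'') = c_i$, and $\|e_i'' - e_i\| < \varepsilon$. Unimodularity of $(e_i'')_i$ in $E$ then follows from the standard pullback identification $E \cong \{(m,b) \in M(A) \oplus B : \pi_A(m) = \tau(b)\}$, which realises $E$ as a unital sub-\ca{} of $M(A) \oplus B$: by spectral permanence, an element of $E$ is invertible in $E$ iff it is invertible in $M(A) \oplus B$, and both components of $\sum_i (e_i'')^2$ are invertible by construction.

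The main technical point will be the spectral-permanence step via the pullback description, together with the reduction to the unital setting; norm-controlled self-adjoint lifting and unitality of the Busby map are routine. Equivalently, one can bypass the explicit pullback by arguing directly that a self-adjoint element of $E$ whose images in both $B$ and $M(A)$ are invertible must itself be invertible in $E$, which encapsulates exactly what is needed to conclude.
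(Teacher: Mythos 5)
Your argument is correct: the two-stage perturbation (first correcting the image in $B$ using $\rr(B)\leq n$ and a norm-controlled self-adjoint lift, then correcting within the $A$-coset using property $(\Lambda_n)$), combined with the pullback description $E\cong\{(m,b)\in M(A)\oplus B:\pi_A(m)=\tau(b)\}$ and spectral permanence, is exactly the mechanism that \autoref{dfn:xrr} is designed for, and the unitization and edge cases ($B=0$, $B$ already unital) all go through as you indicate. Note that the present paper does not prove this statement but imports it from \cite[Theorem~3.7]{Thi23arX:RRExt}, where essentially this same argument is carried out, so your proof matches the intended one rather than offering a genuinely different route.
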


The next result allows us to estimate the extension real rank of a \ca{} given as an extension.

\begin{prp}[{\cite[Proposition~4.2]{Thi23arX:RRExt}}]
\label{prp:xrrOfExtension}
Let $0 \to A \to E \to B \to 0$ be an extension of \ca{s}.
Then
\[
\xrr(E)
\leq \max\big\{ \xrr(A), \xrr(B) \big\}.
\]
\end{prp}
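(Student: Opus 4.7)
My plan is to verify property $(\Lambda_m)$ for $E$ whenever $m \geq \max\{\xrr(A),\xrr(B)\}$. So fix such an $m$, a tuple $(a_0,\ldots,a_m) \in M(E)_\sa^{m+1}$ whose image $(\pi_E(a_i))$ is unimodular in $M(E)/E$, and $\varepsilon>0$; the aim is to perturb $(a_i)$ within $E$ by less than $\varepsilon$ to reach a unimodular tuple in $M(E)$.

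The key structural input is that $A$ is also a closed two-sided ideal of $M(E)$: since $E$ is an ideal in $M(E)$ and $A$ is an ideal in $E$, a short argument using an approximate unit of $E$ shows $M(E)\cdot A \subseteq A$. Passing to quotients produces the intermediate extension
\[
0 \to B \to M(E)/A \to M(E)/E \to 0,
\]
with $B$ identified with $E/A$, and a canonical unital $*$-homomorphism $\varphi \colon M(E)/A \to M(B)$ supplied by the universal property of the multiplier algebra (it is unital because $B$ is essential in $M(B)$). This lets me split the lifting problem into two stages, one for each ideal in the chain $A \subseteq E$.

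Stage one addresses the $B$-direction. Writing $\bar a_i$ for the image of $a_i$ in $M(E)/A$, the tuple $(\varphi(\bar a_i))$ lies in $M(B)_\sa^{m+1}$ and, by unitality of $\varphi$ and of the induced map $M(E)/E \to M(B)/B$, its image in $M(B)/B$ is unimodular. Property $(\Lambda_m)$ for $B$ then produces a unimodular $(c_i) \in M(B)_\sa^{m+1}$ with $c_i - \varphi(\bar a_i) \in B$ and arbitrarily small norm differences. Since $B = E/A$ sits inside $M(E)/A$, these differences admit self-adjoint lifts $e_i \in E$ of controlled norm, and the new tuple $a'_i := a_i + e_i \in M(E)_\sa$ satisfies $a'_i - a_i \in E$ small and $\varphi(\bar a'_i)=c_i$. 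Stage two then applies the analogous lifting principle for $A$: the tuple $(a'_i)$ presents a lifting problem for the extension $0 \to A \to M(E) \to M(E)/A \to 0$, which via the canonical unital map $M(E) \to M(A)$ reduces, up to perturbations absorbed into the tolerated error, to the hypothesis $(\Lambda_m)$ for $A$ (just as $(\Lambda_m)$ of $A$ is exploited in arbitrary extensions in the proof of \autoref{prp:EstimateWithXRR}). This produces a unimodular $(b_i) \in M(E)_\sa^{m+1}$ with $b_i - a'_i \in A$ and small norm differences, and concatenating the two stages gives $b_i - a_i \in E$ with $\|b_i - a_i\|<\varepsilon$, which is exactly $(\Lambda_m)$ for $E$.

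The main obstacle is the two reflection-of-invertibility steps: deducing unimodularity of $(\bar a'_i)$ in $M(E)/A$ from unimodularity of $(c_i) = \varphi(\bar a'_i)$ in $M(B)$, and analogously through $M(E) \to M(A)$ in stage two. The maps $\varphi$ and $M(E) \to M(A)$ need not be injective (the obstruction is the annihilator of the ideal in each ambient algebra), so invertibility cannot simply be transferred back. The fix is an additional small perturbation exploiting the slack in $(\Lambda_m)$ — absorbed into $\varepsilon$ — which is precisely why the extension real rank $\xrr$ interacts well with extensions while the ordinary real rank $\rr$ only admits the weaker bound in \autoref{prp:EstimateBasic}.
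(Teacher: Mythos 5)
This proposition is imported from \cite[Proposition~4.2]{Thi23arX:RRExt}; the present paper contains no proof of it, so I can only assess your argument on its own terms. Your two-stage architecture is the natural one and is sound: $A$ is an ideal in $M(E)$, one first corrects in the $B$-direction using $(\Lambda_m)$ for $B$ through the canonical unital map $\varphi\colon M(E)/A \to M(B)$, then in the $A$-direction using $(\Lambda_m)$ for $A$ through $M(E)\to M(A)$, and the two perturbations add up to a perturbation inside $E$ of size less than $\varepsilon$. You also correctly isolate the one nontrivial point, namely pulling invertibility back along the non-injective maps $\varphi$ and $M(E)\to M(A)$. However, your proposed resolution of that point does not work. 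The kernel of $\varphi$ is the annihilator $N$ of $B$ in $M(E)/A$, and the only perturbations that $(\Lambda_m)$ supplies lie \emph{inside the ideal} $B$; since $N\cdot B=0$, no ``additional small perturbation exploiting the slack in $(\Lambda_m)$'' can influence the behaviour of $\sum_i(\bar a_i')^2$ in the $N$-direction, so this step is a genuine gap as written.

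The correct mechanism uses the second piece of information you already have but do not invoke at that moment: the image of $(\bar a_i')$ in $(M(E)/A)/B\cong M(E)/E$ is unimodular. Concretely, one needs the following lemma: if $D$ is a unital \ca{}, $I\subseteq D$ an ideal, $\sigma\colon D\to M(I)$ the canonical (unital) map, and $h\in D_+$ has invertible image both in $D/I$ and in $M(I)$, then $h$ is invertible in $D$. Indeed, for $g_\varepsilon(t):=\max(0,1-t/\varepsilon)$ and $\varepsilon$ small, $g_\varepsilon(h)$ lies in $I$ because $g_\varepsilon(h+I)=g_\varepsilon(h)+I=0$, while $\sigma(g_\varepsilon(h))=g_\varepsilon(\sigma(h))=0$; since $\sigma$ restricts to the identity on $I$, this forces $g_\varepsilon(h)=0$, hence $h\geq\varepsilon\cdot 1$. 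Applying this with $(D,I)=(M(E)/A,\,B)$ and $h=\sum_i(\bar a_i')^2$ in stage one, and with $(D,I)=(M(E),\,A)$ in stage two, yields unimodularity without any further perturbation. With that lemma inserted, the rest of your argument goes through.
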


\begin{prp}[{\cite[Corollary~5.4]{Thi23arX:RRExt}}]
\label{prp:xrr0SufficientCond}
Let $A$ be a \ca{}.
Assume that $\xrr(A) \leq 1$, $\rr(A)=0$ and $K_1(A)=0$.
Then $\xrr(A)=0$.
\end{prp}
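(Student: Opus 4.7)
The task is to verify property $(\Lambda_0)$: given $a \in M(A)_\sa$ with $\pi_A(a)$ invertible in $M(A)/A$ and $\varepsilon>0$, produce an invertible $b \in M(A)_\sa$ with $\pi_A(b)=\pi_A(a)$ and $\|b-a\|<\varepsilon$. The higher lifting properties $(\Lambda_m)$ for $m \geq 1$ are already available from the hypothesis $\xrr(A) \leq 1$, so only the case of a single self-adjoint element needs to be upgraded.

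The plan is to first use $(\Lambda_1)$ to trade a small $M(A)$-perturbation of $a$ for an extra small self-adjoint error term sitting inside $A$. Applying $(\Lambda_1)$ to the pair $(a,0)$, whose image in $M(A)/A$ is the unimodular tuple $(\pi_A(a),0)$, with some tolerance $\varepsilon_1 \in (0,\varepsilon/2)$ to be chosen later, yields a unimodular self-adjoint pair $(a_0,a_1) \in M(A)_\sa^2$ satisfying $a_0 - a,\, a_1 \in A_\sa$ and $\|a_0-a\|, \|a_1\|<\varepsilon_1$. The problem is thus reduced to finding $c \in A_\sa$ of small norm such that $b := a_0+c$ is invertible in $M(A)$, since then $\pi_A(b) = \pi_A(a)$ automatically.

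Since $\pi_A(a_0)$ is invertible and self-adjoint in $M(A)/A$, it has a well-defined positive spectral projection $p := \chi_{(0,\infty)}(\pi_A(a_0))$. The $K$-theoretic obstruction to perturbing $a_0$ by an element of $A$ into an invertible self-adjoint element is the exponential index $\partial[p] \in K_1(A)$ of the Busby extension $0 \to A \to M(A) \to M(A)/A \to 0$. Since $K_1(A)=0$, this obstruction vanishes. To convert this abstract vanishing into an explicit small perturbation I would use $\rr(A)=0$: approximate $a_1 \in A_\sa$ by a self-adjoint element with finite spectrum, supported on a finite orthogonal family of projections in $A$, and realize the $K_1$-trivializing homotopy through finitely many elementary steps carried out inside corners cut out by these projections, all within a small ball around $a_0$.

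The principal obstacle is quantitative: translating the abstract vanishing of the $K_1$-obstruction into a perturbation whose norm is genuinely controlled by $\varepsilon$. This will likely require an iterative refinement that alternates a $(\Lambda_1)$-application (which shrinks the $A$-part of the pair but need not make $a_0$ invertible) with a spectral decomposition of the $A$-part using $\rr(A)=0$ (which unwinds the $K_1$-obstruction locally but may enlarge the perturbation), with $\varepsilon_1$ chosen small enough that the sum of all successive corrections stays below $\varepsilon$. Coordinating these two types of perturbations is the essential subtlety, and is precisely the point at which the three hypotheses $\xrr(A)\leq 1$, $\rr(A)=0$, and $K_1(A)=0$ are needed simultaneously.
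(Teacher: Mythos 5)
The paper does not actually prove this proposition: it is imported verbatim from \cite[Corollary~5.4]{Thi23arX:RRExt}, so your attempt has to be measured against the argument in that reference rather than against anything in the present text. Your opening reduction is correct: $\xrr(A)\leq 1$ already gives $(\Lambda_m)$ for all $m\geq 1$, so only $(\Lambda_0)$ is at stake, and applying $(\Lambda_1)$ to the pair $(a,0)$ legitimately reduces the problem to finding a small $c\in A_\sa$ with $a_0+c$ invertible.

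The gap is everything after that, and you concede as much: the paragraph beginning ``the principal obstacle is quantitative'' is an acknowledgment that the core of the statement is left unproved. Two concrete problems. First, what the vanishing of the index class of $p=\chi_{(0,\infty)}(\pi_A(a_0))$ in $K_1(A)$ buys you (together with $\rr(A)=0$) is that $p$ lifts to a projection in $M(A)$, hence that \emph{some} invertible self-adjoint lift of $\pi_A(a_0)$ exists; it does not produce one within $\varepsilon$ of $a_0$, since two self-adjoint lifts of the same coset differ by an element of $A$ of uncontrolled norm. Second, your proposed remedy --- ``realize the $K_1$-trivializing homotopy \ldots{} within a small ball around $a_0$'' --- is not available: a path witnessing triviality in $K_1$ carries no a priori norm or length bound (this is precisely the exponential-rank phenomenon studied in \cite{Lin93ExpRank}), so it cannot simply be squeezed into an $\varepsilon$-ball. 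The mechanism that actually yields the $\varepsilon$-control in arguments of this type is different: since $\pi_A(a_0)^2\geq\delta_0$ for some $\delta_0>0$, one has $g(a_0)\in A$ for every continuous $g$ supported in $(-\sqrt{\delta_0},\sqrt{\delta_0})$, so the whole problem localizes to a hereditary sub-\ca{} of $A$ attached to the spectral window of $a_0$ around $0$; every correction performed there automatically has norm bounded by the width of that window, which may be shrunk below $\varepsilon$ at the outset, and it is inside this hereditary subalgebra that $\rr(A)=0$ and $K_1(A)=0$ are brought to bear. Your outline never identifies this localization, and without it the ``iterative refinement'' you gesture at has no reason to keep the accumulated perturbation below $\varepsilon$. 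As written, the proposal is a plausible plan, not a proof.
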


\section{Real rank of multiplier algebras of some simple C*-algebras} 
\label{sec:MultSimple}

In this section, we estimate the real rank of the multiplier algebra of some simple \ca{s}.
In particular, and based on the seminal work of Zhang and Lin, we compute the real rank of the multiplier algebra of $\sigma$-unital, simple \ca{s} that are either purely infinite, or have real rank zero and stable rank one;
see \autoref{prp:rrMultSimple}.
We recover the computation of the real rank of the stable multiplier algebra of \IIone{} factors (\autoref{exa:StableMultII-1}) and of (countably decomposable) type~\III{} factors (\autoref{exa:StableMultIII}).

\medskip

In the next result, we consider the index map $K_0(M(A)/A) \to K_1(A)$ from the six-term exact sequence in $K$-theory (\cite[Corollary~V.1.2.22]{Bla06OpAlgs}) induced by the extension $0 \to A \to M(A) \to M(A)/A \to 0$.
If $A$ and $M(A)/A$ have real rank zero, then the vanishing of this index map is equivalent to the condition that every projection from $M(A)/A$ lifts to $M(A)$;
see, for example, \cite[Proposition~2.5]{Thi23arX:RRExt}.
If, moreover, $A$ is stable, then the $K$-theory of $M(A)$ vanishes (\cite[Theorem~10.2]{Weg93KThyBook}), and then it is also equivalent to the condition $K_1(A)=0$.

In \autoref{prp:rrMultSimple} below we will see that for many simple \ca{s} with real rank zero, the corona algebra has real rank zero as well, which then allows us to compute the real rank of the multiplier algebra.
In \cite{LinNg16CoronaStableJiangSu} and \cite{Ng22RR0PICorona} it is shown that the corona algebra also has real rank zero for certain simple \ca{s} with nonzero real rank, such as the Jiang-Su algebra.

\begin{prp}
\label{prp:rrMult-rr0Corona}
Let $A$ be a \ca{} such that the corona algebra $M(A)/A$ has real rank zero.
Then
\[
\rr(A) \leq \xrr(A) = \rr(M(A)) \leq \rr(A)+1.
\]

If $A$ and $M(A)/A$ have real rank zero, then
\[
\xrr(A)
= \rr(M(A))
= \begin{cases}
0, &\text{if the index map $K_0(M(A)/A) \to K_1(A)$ vanishes} \\
1, &\text{otherwiese}
\end{cases}.
\]
\end{prp}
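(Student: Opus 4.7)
The plan is to split the proof into two parts: first the chain $\rr(A) \leq \xrr(A) = \rr(M(A)) \leq \rr(A)+1$ under the standing hypothesis $\rr(M(A)/A) = 0$, and then the $K$-theoretic dichotomy under the additional assumption $\rr(A) = 0$.

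For the first chain, I would apply the general extension estimates of \autoref{sec:xrr} to the canonical extension $0 \to A \to M(A) \to M(A)/A \to 0$. Since $\rr(M(A)/A)=0$, \autoref{prp:EstimateWithXRR} immediately yields both $\rr(A) \leq \rr(M(A))$ and $\rr(M(A)) \leq \xrr(A)$, while \autoref{prp:EstimateBasic} yields $\rr(M(A)) \leq \rr(A)+1$. The only assertion left from the first chain is therefore the reverse inequality $\xrr(A) \leq \rr(M(A))$, and this is where the main obstacle lies.

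The strategy for $\xrr(A) \leq \rr(M(A))$ is to show that $A$ has property $(\Lambda_m)$ for every $m \geq \rr(M(A))$. Given $(a_0, \ldots, a_m) \in M(A)_\sa^{m+1}$ with $\pi_A$-image unimodular and $\varepsilon > 0$, set $c := \sum_k a_k^2$. The invertibility of $\pi_A(c)$ in $M(A)/A$ gives $\pi_A(c) \geq \delta$ for some $\delta > 0$, so $h := (\delta - c)_+$ lies in $A_+$ and $c+h$ is invertible in $M(A)$. The task is then to realize this positive adjustment coordinate-wise: to find $x_0, \ldots, x_m \in A_\sa$ with $\|x_k\| < \varepsilon$ and $\sum_k (a_k + x_k)^2$ invertible in $M(A)$. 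I would first invoke $\rr(M(A)) \leq m$ to approximate $(a_0, \ldots, a_m)$ by a unimodular tuple in $M(A)_\sa^{m+1}$, and then use the abundance of projections in the real-rank-zero quotient $M(A)/A$ to push the residual perturbation into $A$ while keeping the modified tuple unimodular. The subtle point is to ensure that the corrections land genuinely inside $A$ (not just approximately) and remain small in norm; this is the technically delicate step, and it is where both hypotheses $\rr(M(A)) \leq m$ and $\rr(M(A)/A) = 0$ are used essentially.

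For the second statement, assume in addition that $\rr(A) = 0$. The first part then gives $\xrr(A) = \rr(M(A)) \in \{0, 1\}$. By the Brown--Pedersen criterion for real rank zero of extensions whose ideal and quotient both have real rank zero \cite[Theorem~3.14]{BroPed91CAlgRR0}, $\rr(M(A)) = 0$ if and only if every projection in $M(A)/A$ lifts to a projection in $M(A)$. Since $\rr(M(A)/A) = 0$ ensures that $K_0(M(A)/A)$ is generated by classes of projections, projection-lifting is in turn equivalent to the vanishing of the index map $K_0(M(A)/A) \to K_1(A)$ from the six-term exact sequence of $0 \to A \to M(A) \to M(A)/A \to 0$. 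Combining these two equivalences with the bound $\rr(M(A)) \leq 1$ produces the claimed dichotomy.
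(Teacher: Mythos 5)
Your overall skeleton matches the paper's: both parts are obtained by applying \autoref{prp:EstimateBasic} and \autoref{prp:EstimateWithXRR} to the extension $0 \to A \to M(A) \to M(A)/A \to 0$, and the dichotomy in the second part is the standard projection-lifting/index-map criterion for real rank zero of an extension with real rank zero ideal and quotient (the paper packages this as \cite[Corollary~2.6]{Thi23arX:RRExt}; your route via Brown--Pedersen plus the $K$-theoretic reformulation is the same argument unpacked). That half of your proposal is fine.

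The genuine gap is the inequality $\xrr(A) \leq \rr(M(A))$. You correctly isolate it as the remaining obstacle, set up property $(\Lambda_m)$, and then stop exactly at the hard point: after approximating $(a_0,\ldots,a_m)$ by a unimodular tuple in $M(A)_\sa^{m+1}$ using $\rr(M(A)) \leq m$, the perturbations $b_k - a_k$ are small but have no reason to lie in $A$, and your plan to ``use the abundance of projections in $M(A)/A$ to push the residual perturbation into $A$'' is not an argument --- it is a restatement of the difficulty. This is not a routine verification one can defer: the inequality $\rr(A) \leq \xrr(A) \leq \rr(M(A))$ is a standalone result, \cite[Proposition~3.11]{Thi23arX:RRExt}, proved there for \emph{arbitrary} \ca{s} (so your claim that the hypothesis $\rr(M(A)/A)=0$ is used essentially in this step is also off the mark --- no hypothesis on the corona is needed). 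The paper's proof simply cites that proposition; without either citing it or supplying the missing perturbation argument (which in the reference goes through a quasi-central approximate unit construction, not through projections in the corona), your proof of the first chain, and hence of the equality $\xrr(A) = \rr(M(A))$ used throughout the second part, is incomplete.
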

\begin{proof}
Applying Theorems~\ref{prp:EstimateBasic} and~\ref{prp:EstimateWithXRR} for the extension
\[
0 \to A \to M(A) \to M(A)/A \to 0,
\] 
we get
\begin{align*}
\rr(M(A))
&\leq \rr(A)+\rr(M(A)/A)+1
= \rr(A)+1, \andSep \\
\rr(M(A))
&\leq \max\{ \xrr(A), \rr(M(A)/A) \big\}
= \xrr(A).
\end{align*}
By \cite[Proposition~3.11]{Thi23arX:RRExt}, we have
\[
\rr(A)
\leq \xrr(A) 
\leq \rr(M(A)).
\]
By combining these estimates, we get $\rr(A) \leq \xrr(A) = \rr(M(A)) \leq \rr(A)+1$.

By \cite[Corollary~2.6]{Thi23arX:RRExt}, an extension of real rank zero \ca{s} has real rank zero or one, depending on whether the index map from $K_0$ of the quotient to~$K_1$ of the ideal vanishes.
The computation of $\rr(M(A))$ follows.
\end{proof}

\begin{thm}
\label{prp:rrMultSimple}
Let $A$ be a $\sigma$-unital, simple \ca{} that is purely infinite or has real rank zero and stable rank one.
Then $\rr(M(A)/A)=0$ and
\[
\xrr(A)
= \rr(M(A))
= \begin{cases}
0, &\text{if the index map $K_0(M(A)/A) \to K_1(A)$ vanishes} \\
1, &\text{otherwise}
\end{cases}.
\]
\end{thm}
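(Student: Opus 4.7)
The plan is to reduce the statement to \autoref{prp:rrMult-rr0Corona} by verifying its hypotheses in both of the cases considered. Since \autoref{prp:rrMult-rr0Corona} already expresses $\xrr(A)$ and $\rr(M(A))$ in terms of the index map whenever $\rr(A)=\rr(M(A)/A)=0$, the only real work is to confirm that the corona algebra has real rank zero in each case.

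First, I would check that $\rr(A)=0$. In the second case this is part of the hypothesis, and in the first case it is Zhang's well-known result that every $\sigma$-unital simple purely infinite \ca{} has real rank zero.

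Next, I would establish the key ingredient $\rr(M(A)/A)=0$ by invoking the literature. For $\sigma$-unital, simple, purely infinite \ca{s} this is \cite[Corollary~2.6(i)]{Zha92RR0CoronaMultiplier1}; for $\sigma$-unital, simple \ca{s} with real rank zero and stable rank one this is \cite[Theorem~15]{Lin93ExpRank}. These two theorems are the substantive inputs, and together they guarantee $\rr(M(A)/A)=0$ under the hypotheses of the theorem.

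With both $A$ and $M(A)/A$ now known to have real rank zero, the conclusion is immediate from the second half of \autoref{prp:rrMult-rr0Corona}: we obtain $\xrr(A)=\rr(M(A))$, equal to $0$ if the index map $K_0(M(A)/A)\to K_1(A)$ vanishes and $1$ otherwise. The main obstacle in principle is showing that the corona algebra has real rank zero, but that work has already been carried out by Zhang and Lin; the present theorem is essentially a repackaging of their results through the extension-real-rank framework of \autoref{prp:rrMult-rr0Corona}.
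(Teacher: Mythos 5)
Your proposal is correct and follows essentially the same route as the paper: reduce to \autoref{prp:rrMult-rr0Corona} after verifying $\rr(A)=0$ and $\rr(M(A)/A)=0$ via Lin's theorem in the stable rank one case and Zhang's results in the purely infinite case. The only refinement the paper adds is that in the purely infinite case it first invokes Zhang's dichotomy that $A$ is either unital or stable, applying \cite[Corollary~2.6(i)]{Zha92RR0CoronaMultiplier1} only in the stable case (the unital case being trivial since then $M(A)/A=\{0\}$), a detail worth including when you cite that corollary.
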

\begin{proof}
We will see that in both cases we have $\rr(A)=0$ and $\rr(M(A)/A)=0$, whence the result follows from \autoref{prp:rrMult-rr0Corona}.
In the case that $A$ has real rank zero and stable rank one, we have $\rr(M(A)/A)=0$ by \cite[Theorem~15]{Lin93ExpRank}.
Now assume that $A$ is purely infinite.
Then $\rr(A)=0$ by \cite[Proposition~V.3.2.12]{Bla06OpAlgs};
see also \cite[Theorem~1.2(ii)]{Zha92RR0CoronaMultiplier1}.
Further, $A$ is either unital or stable by \cite[Theorem~1.2(i)]{Zha92RR0CoronaMultiplier1}.
It follows that $\rr(M(A)/A)=0$, using \cite[Corollary~2.6(i)]{Zha92RR0CoronaMultiplier1} in the stable case.
\end{proof}

\begin{cor}
\label{prp:rrMultStableSimple}
Let $A$ be a unital, simple \ca{} that is purely infinite or has real rank zero and stable rank one.
Then $\rr(M(A\otimes\Cpct)/(A\otimes\Cpct) = 0$ and 
\[
\xrr(A\otimes\Cpct)
= \rr(M(A\otimes\Cpct))
= \begin{cases}
0, &\text{if $K_1(A)=0$} \\
1, &\text{otherwise}
\end{cases}.
\]
\end{cor}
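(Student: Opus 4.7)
The plan is to apply \autoref{prp:rrMultSimple} directly to the \ca{} $B := A\otimes\Cpct$ and then translate the vanishing of the index map into a condition on $K_1(A)$. There is essentially no new work needed; the corollary is a routine consequence of the preceding theorem combined with the stability of $K$-theory.

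First I would verify that $B$ satisfies the hypotheses of \autoref{prp:rrMultSimple}. Since $A$ is unital and $\Cpct$ is $\sigma$-unital, $B$ is $\sigma$-unital, and simplicity of both tensor factors ensures that $B$ is simple. If $A$ is purely infinite, then $B$ is a stable simple purely infinite \ca{}. If instead $A$ has real rank zero and stable rank one, then so does $B$: real rank zero passes to $A\otimes\Cpct$ (viewed as an inductive limit of matrix algebras over $A$, or via hereditary subalgebras), and for simple unital $A$ of stable rank one, the stabilization $A\otimes\Cpct$ still has stable rank one (Rieffel). With the hypotheses verified, \autoref{prp:rrMultSimple} yields $\rr(M(B)/B)=0$ together with
\[
\xrr(B) = \rr(M(B)) = \begin{cases} 0, & \text{if the index map } K_0(M(B)/B)\to K_1(B) \text{ vanishes} \\ 1, & \text{otherwise.} \end{cases}
\]

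It then remains to show that vanishing of this index map is equivalent to $K_1(A)=0$. Since $B = A\otimes\Cpct$ is stable, the $K$-theory of $M(B)$ vanishes by \cite[Theorem~10.2]{Weg93KThyBook}. The six-term exact sequence in $K$-theory associated with the extension $0 \to B \to M(B) \to M(B)/B \to 0$ therefore collapses to an isomorphism $K_0(M(B)/B) \xrightarrow{\cong} K_1(B)$ given precisely by the index map. In particular, this map vanishes if and only if $K_1(B)=0$. Finally, stability of $K_1$ under tensoring with $\Cpct$ gives $K_1(B) \cong K_1(A)$, yielding the stated dichotomy.

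The only point requiring any care is the preservation of the structural hypotheses (real rank zero, stable rank one, pure infiniteness, simplicity, $\sigma$-unitality) under stabilization by $\Cpct$; each of these is standard and can be cited. There is no genuine obstacle, and indeed the statement is phrased so that the $K$-theoretic hypothesis $K_1(A\otimes\Cpct)=0$ from \autoref{prp:rrMultSimple} becomes the cleaner condition $K_1(A)=0$.
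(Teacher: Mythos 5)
Your proposal is correct and follows essentially the same route as the paper: apply \autoref{prp:rrMultSimple} to $A\otimes\Cpct$, then use that the $K$-theory of the multiplier algebra of a stable \ca{} vanishes (\cite[Theorem~10.2]{Weg93KThyBook}) and the six-term exact sequence to identify the index map with an isomorphism onto $K_1(A\otimes\Cpct)\cong K_1(A)$. The only difference is that you spell out the (standard) preservation of the structural hypotheses under stabilization, which the paper leaves implicit.
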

\begin{proof}
By \cite[Theorem~10.2]{Weg93KThyBook}, the $K$-theory of the the multiplier algebra of a stable \ca{} vanishes.
Applying the six-term exact sequence in $K$-theory (\cite[Corollary~V.1.2.22]{Bla06OpAlgs}), it follows that the index map
\[
K_0(M(A\otimes\Cpct)/(A\otimes\Cpct)) \to K_1(A\otimes\Cpct) \cong K_1(A)
\]
is an isomorphism.
Thus, the index map vanishes if and only if $K_1(A)=0$.
Now the result follows from \autoref{prp:rrMultSimple}.
\end{proof}

\begin{exa}
\label{exa:StableMultCalkin}
Consider the Calkin algebra $\Calk := \Bdd/\Cpct$.
Then $\Calk$ is a unital, simple, purely infinite \ca{} with $K_1(\Calk) \cong \ZZ \neq 0$.
It follows from \autoref{prp:rrMultStableSimple} that
\[
\rr( M(\Calk\otimes\Cpct)/(\Calk\otimes\Cpct) ) 
= 0, \andSep
\xrr(\Calk\otimes\Cpct) 
= \rr( M(\Calk\otimes\Cpct) ) 
= 1.
\]

The computation of the real rank of the stable corona algebra of $\Calk$ was first obtained by Zhang \cite[Examples~2.7(iii)]{Zha92RR0CoronaMultiplier1}.
\end{exa}

\begin{exa}
\label{exa:StableMultIrratRot}
Let $A_\theta$ be an irrational rotation algebra.
We refer to \cite[Examples~II.8.3.3(i) and~II.10.4.12(i)]{Bla06OpAlgs} for details.
The \ca{} $A_\theta$ is unital, simple, has real rank zero (\cite[Theorem~1.5]{BlaKumRor92ApproxCentralMatUnit}), stable rank one (\cite[Theorem~1]{Put90InvDenseIrratRot}), and $K_1(A_\theta) \cong \ZZ^2 \neq 0$ (\cite[Section~12.3]{Weg93KThyBook}).
It follows from \autoref{prp:rrMultStableSimple} that
\[
\rr( M(A_\theta\otimes\Cpct)/(A_\theta\otimes\Cpct) ) 
= 0, \andSep
\xrr(A_\theta\otimes\Cpct) 
= \rr(M(A_\theta\otimes\Cpct)) 
= 1.
\]

The computation of the real rank of the stable corona algebra of $A_\theta$ was first obtained by Lin \cite[Theorem~15]{Lin93ExpRank}.
\end{exa}

We now turn to the stable multiplier algebras of von Neumann factors.
Using the description of the norm-closed ideals in factors (\cite[Proposition~III.1.7.11]{Bla06OpAlgs}), we see that a factor $N$ is simple as a \ca{} if and only if it is type~\In{} for finite~$n$ (that is, $N \cong M_n(\mathbb{C})$), $N$ is type~\IIone, or $N$ is countably decomposable and type~\III.
In the case of type~\In, we have $N\otimes\Cpct \cong \Cpct$, and then 
\[
\rr(M(N\otimes\Cpct))
= \rr(\Bdd)
= 0.
\]
We now consider the real rank of the stable multiplier algebra of factors of type~\IIone{} and type~\III.
It is well-known that every von Neumann algebra has real rank zero and vanishing $K_1$-group, and that every finite von Neumann algebra has stable rank one.

\begin{exa}
\label{exa:StableMultII-1}
Let $N$ be a type~\IIone{} factor.
Then $N$ is a unital, simple \ca{} that has real rank zero, stable rank one, and vanishing $K_1$-group.
Thus, we have
\[
\rr(M(N\otimes\Cpct)) = 0, \andSep
\xrr(N \otimes \Cpct)
= \rr(M(N \otimes \Cpct))
= 0
\]
by \autoref{prp:rrMultStableSimple}.
The computation of the real rank of the stable multiplier algebra of type~\IIone{} factors was first obtained by Zhang \cite[Examples~2.11]{Zha92RR0CoronaMultiplier1};
see also \cite[Theorem~10]{Lin93ExpRank}.
\end{exa}

\begin{exa}
\label{exa:StableMultIII}
Let $N$ be a countably decomposable type~\III{} factor.
Then $N$ is a unital, simple, purely infinite \ca{} with $K_1(N)=0$.
Thus, we have
\[
\xrr(N \otimes \Cpct)
= \rr(M(N \otimes \Cpct))
= 0
\]
by \autoref{prp:rrMultStableSimple}.
The computation of the real rank of $M(N \otimes \Cpct))$ was first obtained by Zhang \cite[Examples~2.7(iv)]{Zha92RR0CoronaMultiplier1}.
However, contrary to what is stated in \cite[Examples~2.7(iv)]{Zha92RR0CoronaMultiplier1}, a type~\III{} factor is only simple if it is countably decomposable.
Therefore, it remains open if we have $\rr(M\otimes\Cpct)=0$ for arbitrary type~\III{} factors;
see also \autoref{rmk:VNA}.
\end{exa}

\section{Extensions by the algebra of compact operators} 
\label{sec:ExtCpct}

In this section, we consider extensions
\[
0 \to \Cpct \to E \to B \to 0
\]
where $E$ is $\sigma$-compact. 
We solve \autoref{pbm:xrrHerMA} for $\Cpct$ (\autoref{prp:xrrHerBdd}) and deduce in \autoref{prp:rrMultExtByKK} that
\[
\rr(M(E)) = \rr(M(B)).
\]

We use this to exhibit a separable, nuclear counterexample to the second and third Brown-Pedersen conjectures (\autoref{exa:SeparableBPCounterexample}) and to compute the real rank of the stable multiplier and corona algebra of the algebra $\Bdd$ of bounded operators on a separable, infinite-dimensional Hilbert space (\autoref{prp:rrStableMultBdd}).

\medskip

First, we consider the general framework for studying the multiplier and corona algebras of extensions.

\begin{pgr}
\label{pgr:MultExt}
Let $0 \to A \to E \xrightarrow{\pi} B \to 0$ be an extension of \ca{s}. 
The natural extension $\pi^{**}\colon E^{**} \to B^{**}$ is a surjective morphism whose kernel is naturally isomorphic to $A^{**}$;
see \cite[III.5.2.11]{Bla06OpAlgs}.
We realize the multiplier algebra of a \ca{} $D$ as
\[
M(D) = \big\{ x\in D^{**} : xD+Dx \subseteq D \big\}.
\]
Applying this for $E$ and $B$, we see that $\pi^{**}$ maps $M(E)$ to $M(B)$, and we denote this morphism by $\bar{\pi} \colon M(E) \to M(B)$. 

Let us now assume that $E$ is $\sigma$-unital.
Then $\bar{\pi}$ is surjective by \cite[Theorem~10]{Ped86SAW}, and we set $J:=\ker(\bar{\pi})$.
We obtain the following inclusions of extensions:
\[
\xymatrix@R-10pt{
0 \ar[r] & A \ar[r] & E \ar[r]^{\pi} & B \ar[r] & 0 \\
0 \ar[r] 
& J \ar[r] \ar@{}[u]|{\subseteqRotated} 
& M(E) \ar[r]^{\bar{\pi}} \ar@{}[u]|{\subseteqRotated} 
& M(B) \ar[r] \ar@{}[u]|{\subseteqRotated} & 0 \\
0 \ar[r] 
& A^{**} \ar[r] \ar@{}[u]|{\subseteqRotated}
& E^{**} \ar[r]^{\pi^{**}} \ar@{}[u]|{\subseteqRotated}
& B^{**} \ar[r] \ar@{}[u]|{\subseteqRotated} & 0 \\
}
\]

We thus have
\[
J = A^{**} \cap M(E) \subseteq A^{**}.
\]

The surjective morphism $\bar{\pi} \colon M(E) \to M(B)$ induces a surjective morphism $\bar{\bar{\pi}} \colon M(E)/E \to M(B)/B$, with kernel $K := \ker(\bar{\bar{\pi}})$.
We obtain the following extension:
\[
\xymatrix@R-10pt{
0 \ar[r] 
& K \ar[r] 
& M(E)/E \ar[r]^{\bar{\bar{\pi}}} 
& M(B)/B \ar[r] 
& 0
}
\]
Note that $K$ is naturally isomorphic to $J/A$.
\end{pgr}

\begin{lma}
\label{prp:KernelMultExt}
Retain the notation from \autoref{pgr:MultExt}.
Then $A \subseteq J \subseteq M(A)$, and~$J$ is a hereditary sub-\ca{} of $M(A)$.
Further, $K=J/A \subseteq M(A)/A$ is a hereditary sub-\ca{} of $M(A)/A$.
\end{lma}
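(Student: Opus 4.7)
The plan is to work inside $E^{**}$ and exploit the identification $A^{**} \cap E = A$ together with the characterization of hereditary sub-\ca{s} via $\overline{CDC} \subseteq C$.

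First, I would handle the inclusions. The containment $A \subseteq J$ is immediate from $A \subseteq E \subseteq M(E)$ together with $A \subseteq A^{**}$. For $J \subseteq M(A)$, the starting point is the identity $A^{**} \cap E = A$: if $e \in E \cap A^{**} = E \cap \ker \pi^{**}$, then $\pi(e) = \pi^{**}(e) = 0$, so $e \in A$. Using this, for $x \in J$ and $a \in A$, the product $xa$ lies in $A^{**}$ (since $A^{**}$ is an ideal in $E^{**}$) and in $E$ (since $x \in M(E)$), hence in $A$; the symmetric computation on the right then shows $x \in M(A)$.

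For the hereditary property of $J$ in $M(A)$, my plan is to verify directly that $J\,M(A)\,J \subseteq J$. Given $x, x' \in J$ and $m \in M(A)$, the product $xmx'$ lies in $A^{**}$ since each factor does. To see $xmx' \in M(E)$, I would compute, for $e \in E$,
\[
(xmx')\,e \;=\; x\,m\,(x'e) \;\in\; x\,m\,A \;\subseteq\; x\,A \;\subseteq\; A \;\subseteq\; E,
\]
using successively that $x' \in J$ sends $E$ into $A$, that $m \in M(A)$ preserves $A$, and that $x \in M(A)$ preserves $A$. The symmetric computation yields $e(xmx') \in A$, so $xmx' \in M(E) \cap A^{**} = J$.

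The statement for $K = J/A$ then follows formally: since $A$ is a closed ideal in $M(A)$ contained in $J$ and $J\,M(A)\,J \subseteq J$, applying the quotient map gives $(J/A)(M(A)/A)(J/A) \subseteq J/A$, and because $J/A$ is norm-closed in $M(A)/A$ this survives passage to the closure. The subtle point — and what I expect to be the main obstacle — is that the tempting stronger assertion that ``$J$ is an ideal of $M(A)$'' appears to fail: for $m \in M(A)$ and $e \in E$ the product $em$ need not lie in $E$, which obstructs showing $e(mx) \in E$ directly. The two-sided sandwich $xmx'$ circumvents this by using $x'\in J$ on the right to first drag $e$ into $A$, after which all subsequent $M(A)$-multiplications stay inside $A$.
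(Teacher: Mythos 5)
Your proof is correct and follows essentially the same route as the paper's: establish that elements of $J$ multiply $E$ into $A$ via the identification $A = E \cap A^{**}$, deduce $J \subseteq M(A)$, and verify heredity by the two-sided sandwich $xmx' \in J$, with the statement for $K$ following by passing to the quotient. Your closing observation about why the sandwich is needed (rather than a one-sided ideal property) matches the structure of the paper's argument exactly.
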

\begin{proof}
We will use that $J = A^{**} \cap M(E) \subseteq A^{**}$.
We first establish the following:

Claim: \emph{Let $x \in J$ and $e \in E$. Then $xe, ex \in A$.}
Indeed, since $x \in J \subseteq M(E)$ and $e \in E$, we have $xe,ex \in E$.
Further, since $x \in J \subseteq A^{**}$, and since $A^{**}$ is an ideal (in fact a summand) in $E^{**}$, we have $xe, ex \in A^{**}$.
Using that $A = E \cap A^{**}$, we obtain $xe,ex \in A$.
This proves the claim.

It follows directly from the claim that $J$ is contained in $M(A) = \{ x\in A^{**} : xA+Ax \subseteq A \}$.
To show that $J \subseteq M(A)$ is hereditary, let $x,y\in J$ and let $z \in M(A)$.
We need to verify $xzy \in J$.
We clearly have $xyz\in A^{**}$, and it remains to verify that $xzy \in M(E)$.
Let $e \in E$.
Using the claim, we have $ex \in A$.
Since $z \in M(A)$, we get $axz \in A$, and then $axzy \in A \subseteq E$.
Analogously, we have $xzye\in E$.
Since this holds for every $e \in E$, we have $xzy \in M(E)$, as desired.

Using that $J \subseteq M(A)$ is hereditary, it follows that $K =J/A \subseteq M(A)/A$ is hereditary as well.
\end{proof}

\begin{prp}
\label{prp:rrMultGeneralExt}
Let $0 \to A \to E \to B \to 0$ be an extension of \ca{s}, and assume that $E$ is $\sigma$-unital.

If $\rr(M(A))=0$, then
\[
\rr(M(B)) \leq \rr(M(E)) \leq \rr(M(B)) + 1.
\]

If $\rr(M(A)/A)=0$, then
\[
\rr(M(B)/B) \leq \rr(M(E)/E) \leq \rr(M(B)/B) + 1.
\]
\end{prp}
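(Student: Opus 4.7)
The plan is to reduce both statements to the basic estimate from \autoref{prp:EstimateBasic} applied to the extensions of multiplier and corona algebras described in \autoref{pgr:MultExt}, exploiting the hereditary inclusions from \autoref{prp:KernelMultExt}.

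For the first statement, I would invoke \autoref{pgr:MultExt} to obtain the extension
\[
0 \to J \to M(E) \to M(B) \to 0,
\]
which exists because $E$ is assumed $\sigma$-unital. By \autoref{prp:KernelMultExt}, $J$ is a hereditary sub-\ca{} of $M(A)$. Since the real rank passes to hereditary sub-\ca{s} (this is a standard permanence property of real rank zero, due to Brown--Pedersen), the assumption $\rr(M(A)) = 0$ gives $\rr(J) = 0$. Plugging this into \autoref{prp:EstimateBasic} yields
\[
\max\{ 0, \rr(M(B)) \} \leq \rr(M(E)) \leq 0 + \rr(M(B)) + 1,
\]
which is exactly the desired inequality.

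For the second statement, the same strategy applies to the corona extension
\[
0 \to K \to M(E)/E \to M(B)/B \to 0
\]
from \autoref{pgr:MultExt}. By \autoref{prp:KernelMultExt}, $K = J/A$ is a hereditary sub-\ca{} of $M(A)/A$, so the assumption $\rr(M(A)/A) = 0$ forces $\rr(K) = 0$, and then \autoref{prp:EstimateBasic} gives the bounds $\rr(M(B)/B) \leq \rr(M(E)/E) \leq \rr(M(B)/B) + 1$.

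The only nontrivial ingredient is the hereditary permanence of real rank zero, which is a well-known fact that does not need to be reproved here; everything else is bookkeeping of the extension diagrams already assembled in \autoref{pgr:MultExt} and \autoref{prp:KernelMultExt}. Since both halves follow the identical pattern, I would present them together with one concise argument and invoke the relevant permanence property by citation.
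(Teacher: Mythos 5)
Your proposal is correct and follows essentially the same route as the paper: both pass to the extensions $0 \to J \to M(E) \to M(B) \to 0$ and $0 \to K \to M(E)/E \to M(B)/B \to 0$ from \autoref{pgr:MultExt}, use \autoref{prp:KernelMultExt} together with the Brown--Pedersen hereditary permanence of real rank zero to get $\rr(J)=0$ (resp.\ $\rr(K)=0$), and conclude via \autoref{prp:EstimateBasic}. No gaps.
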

\begin{proof}
By \autoref{pgr:MultExt} and \autoref{prp:KernelMultExt}, we have extensions
\[
0 \to J \to M(E) \to M(B) \to 0, \andSep
0 \to K \to M(E)/E \to M(B)/B \to 0,
\]
where $J$ is a hereditary sub-\ca{} of $M(A)$, and $K$ is a hereditary sub-\ca{} of $M(A)/A$.

By \cite[Corollary~2.8]{BroPed91CAlgRR0}, real rank zero passes to hereditary sub-\ca{s}.
Thus, if $\rr(M(A))=0$, then $\rr(J)=0$, and the estimate for $\rr(M(E))$ follows from \autoref{prp:EstimateBasic}.
Similarly, if $\rr(M(A)/A)=0$, then $\rr(K)=0$, and the estimate for $\rr(M(E)/E)$ follows analogously.
\end{proof}

Thus, in the setting of \autoref{prp:rrMultGeneralExt}, if $\rr(M(A))=0$, then the real rank of $M(E)$ can take at most two values, and we are led to wonder if $\rr(M(E))$ is equal to $\rr(M(B))$ or to $\rr(M(B))+1$.
The following result of Brown-Pedersen provides an answer when $M(B)$ has real rank zero.
This suggests \autoref{qst:rr:MultExtension} below.

\begin{prp}[{\cite[Theorem~4.8]{BroPed09Limits}}]
\label{prp:MultExtRR0}
Let $0 \to A \to E \to B \to 0$ be an extension of \ca{s}.
Assume that $E$ is $\sigma$-unital, and that $\rr(M(A))=0$ and $\rr(M(B))=0$.
Then $\rr(M(E)) = 0$.
\end{prp}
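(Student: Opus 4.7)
The plan is to use the extension framework from \autoref{pgr:MultExt} together with the $K$-theoretic characterization of real rank zero extensions to reduce the statement to a projection-lifting problem, which I then handle using the real rank zero of the ideal $J$.

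Setup: By \autoref{pgr:MultExt} and \autoref{prp:KernelMultExt}, the $\sigma$-unitality of $E$ gives a surjective morphism $\bar{\pi}\colon M(E)\to M(B)$ whose kernel $J$ is a hereditary sub-\ca{} of $M(A)$. Since $\rr(M(A))=0$ and real rank zero passes to hereditary sub-\ca{s} (\cite[Corollary~2.8]{BroPed91CAlgRR0}), we have $\rr(J)=0$, and \autoref{prp:rrMultGeneralExt} already yields $\rr(M(E))\le 1$. To upgrade this bound to $0$, I would invoke \cite[Corollary~2.6]{Thi23arX:RRExt}: for the extension $0\to J\to M(E)\to M(B)\to 0$ of real rank zero \ca{s}, the middle term has real rank zero if and only if the induced index map $\partial\colon K_0(M(B))\to K_1(J)$ vanishes. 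Because $M(B)$ has real rank zero, $K_0(M(B))$ is generated by classes of projections, so the vanishing of $\partial$ is equivalent to the condition that every projection $p\in M(B)$ lifts to a projection $q\in M(E)$ with $\bar{\pi}(q)=p$.

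Projection-lifting argument: Given a projection $p\in M(B)$, take a self-adjoint lift $x\in M(E)$ with $0\le x\le 1$, which exists by applying the truncation $t\mapsto \min(1,\max(0,t))$ to an arbitrary self-adjoint lift. Then $y:=x-x^2\in J_+$ with $\|y\|\le 1/4$. Using $\rr(J)=0$ together with the fact that $J$ is an ideal in $M(E)$, I would construct a quasicentral approximate unit of $J$ inside $M(E)$ consisting of projections. For a sufficiently large member $e$ from this approximate unit, the commutator $[e,x]$ is norm-small and $ey\approx y$, allowing separate analysis on the two corners. On $eM(E)e\subseteq J$, real rank zero of $J$ furnishes a projection $q_0$ close to $exe$; on $(1-e)M(E)(1-e)$, the element $(1-e)x(1-e)$ has spectrum clustered near $\{0,1\}$ because $(1-e)y(1-e)$ is small by quasicentrality, so a spectral cut-off yields a projection $q_1$. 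Setting $q:=q_0+q_1$ and performing an iterative refinement to secure exact agreement $\bar{\pi}(q)=p$ (rather than only approximate) produces the desired lift.

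The main obstacle is the simultaneous construction of a quasicentral approximate unit of $J$ inside $M(E)$ whose members are actual projections. A quasicentral approximate unit of positive contractions always exists, and $\rr(J)=0$ alone provides an approximate unit of projections, but combining both properties is subtle. Here the strong hypothesis $\rr(M(A))=0$, rather than just $\rr(J)=0$, is precisely what gives access to enough projections in the correct position inside $M(E)$ to carry out the spectral splitting; this hereditary embedding $J\subseteq M(A)$ is the crucial ingredient that makes the argument work, and the iterative refinement ensuring $\bar{\pi}(q)=p$ exactly is the remaining technical step.
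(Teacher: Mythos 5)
First, a point of reference: the paper offers no proof of this proposition at all --- it is quoted from Brown--Pedersen \cite[Theorem~4.8]{BroPed09Limits} --- so your attempt has to stand on its own. Your reduction is fine: the extension $0 \to J \to M(E) \to M(B) \to 0$ with $J$ hereditary in $M(A)$, hence $\rr(J)=0$, and the equivalence (for an extension whose ideal and quotient have real rank zero) between real rank zero of the middle algebra and liftability of projections from the quotient, are exactly \autoref{pgr:MultExt}, \autoref{prp:KernelMultExt}, and \cite[Proposition~2.5 and Corollary~2.6]{Thi23arX:RRExt}. The problem is the lifting step, and the gap there is fatal rather than merely technical.

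The only inputs your lifting argument actually uses are $\rr(J)=0$ (approximate units of projections in hereditary sub-\ca{s} of $J$, spectral splitting across a projection $e\in J$) and $\rr(M(B))=0$ (used only in the reduction). The embedding $J \subseteq M(A)$ with $\rr(M(A))=0$ contributes nothing further: the only projections you can insert into $M(E)$ are those of $J$ itself (elements of $M(A)\setminus J$ need not multiply $M(E)$ into $M(E)$), and since hereditary sub-\ca{s} of $J$ are hereditary in $M(A)$, the hypothesis $\rr(M(A))=0$ supplies exactly the same approximate units of projections as $\rr(J)=0$ does. An argument running on these inputs alone would prove that every extension with real rank zero ideal and quotient has real rank zero, which is false --- this is precisely the index obstruction the whole paper is about. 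For instance, $0 \to \Calk\otimes\Cpct \to M(\Calk\otimes\Cpct) \to M(\Calk\otimes\Cpct)/(\Calk\otimes\Cpct) \to 0$ has ideal and quotient of real rank zero but middle algebra of real rank one (\autoref{exa:StableMultCalkin}, \autoref{exa:rrMultExtSimplePI-ExceptionalCase}); similarly, the UCT produces extensions $0 \to \Calk\otimes\Cpct \to C \to \mathbb{C}^2 \to 0$ with nonzero index map $K_0(\mathbb{C}^2)\to K_1(\Calk\otimes\Cpct)\cong\ZZ$, so that the projection $(1,0)$ does not lift even though the ideal is simple, purely infinite, and awash in projections. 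Concretely, the step that fails is the existence of an approximate unit of projections of $J$ that is quasicentral relative to the lift $x$: such approximate units do not exist in general (for $J=\Cpct$ and $x$ related to a non-quasidiagonal operator this is exactly the quasidiagonality obstruction), and you give no construction --- you only assert that $\rr(M(A))=0$ ``gives access to enough projections in the correct position,'' which, as above, it does not. A correct proof must exploit, in the lifting step itself, that $\bar{\pi}$ is the map induced between multiplier algebras of a $\sigma$-unital extension --- that $A \subseteq J$, that $J$ is an ideal of $M(E)$, and that $\rr(M(B))=0$ --- and not merely that $J$ is a real rank zero hereditary sub-\ca{} of $M(A)$.
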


\begin{qst}
\label{qst:rr:MultExtension}
Let $0 \to A \to E \to B \to 0$ be an extension of \ca{s}.
Assume that $E$ is $\sigma$-unital, and that $\rr(M(A))=0$.
Do we have $\rr(M(E)) = \rr(M(B))$?
\end{qst}

In \autoref{prp:rrMultExtByKK}, we will answer \autoref{qst:rr:MultExtension} positively for the case $A=\Cpct$.
Recall that we let $Q:=\Bdd/\Cpct$ denote the Calkin algebra.

\begin{lma}
\label{prp:xrrHerBdd}
We have $\xrr(J) = 0$ for every hereditary sub-\ca{} $J \subseteq \Bdd$.
Further, we have $\xrr(K) \leq 1$ for every hereditary sub-\ca{} $K \subseteq Q$.
\end{lma}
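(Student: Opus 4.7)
The plan is to establish both bounds by combining the general tools of \autoref{sec:xrr} with the concrete ideal structure of $\Bdd$ and $\Calk$.

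First I would handle part~(2). Since $\Calk$ is simple and purely infinite, every nonzero hereditary sub-\ca{} $K \subseteq \Calk$ is itself simple and purely infinite, as both properties are inherited by hereditary sub-\ca{s} of simple \ca{s}. Applying \autoref{prp:rrMultSimple} to such a $K$ (reducing to the $\sigma$-unital case when necessary) yields $\xrr(K) = \rr(M(K)) \in \{0, 1\}$, giving $\xrr(K) \leq 1$.

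For part~(1), the plan is to apply \autoref{prp:xrr0SufficientCond}, which reduces $\xrr(J) = 0$ to verifying $\rr(J) = 0$, $\xrr(J) \leq 1$, and $K_1(J) = 0$. Real rank zero is immediate: $\Bdd$ is a von Neumann algebra with $\rr(\Bdd) = 0$, and real rank zero passes to hereditary sub-\ca{s} by \cite[Corollary~2.8]{BroPed91CAlgRR0}. For the bound on $\xrr(J)$, I would consider the short exact sequence
\[
0 \to J \cap \Cpct \to J \to J/(J \cap \Cpct) \to 0.
\]
The ideal $J \cap \Cpct$ is a hereditary sub-\ca{} of $\Cpct$, hence isomorphic to $M_n(\mathbb{C})$ or to $\Cpct$, both of which are $\sigma$-unital and simple with trivial $K_1$, and so satisfy $\xrr = 0$ by \autoref{prp:rrMultSimple}. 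The quotient $J/(J \cap \Cpct)$ embeds as a hereditary sub-\ca{} of $\Calk$, so by part~(2) its extension real rank is at most~$1$. Applying \autoref{prp:xrrOfExtension} then yields $\xrr(J) \leq 1$.

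For $K_1(J) = 0$, I would use that the only closed two-sided ideals of $\Bdd$ are $0$, $\Cpct$, and $\Bdd$ itself, all with vanishing $K_1$. A $\sigma$-unital hereditary sub-\ca{} $J \subseteq \Bdd$ is stably isomorphic, via Brown's stabilization theorem, to the closed ideal $\overline{\Bdd J \Bdd}$ that it generates, so inherits $K_1 = 0$ by Morita invariance; the general case follows by continuity of $K$-theory along the net of $\sigma$-unital hereditary sub-\ca{s}. Combining the three inputs via \autoref{prp:xrr0SufficientCond} concludes $\xrr(J) = 0$.

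The main obstacle I anticipate is the implicit dependence on $\sigma$-unitality in \autoref{prp:rrMultSimple} and Brown's stabilization theorem. For non-$\sigma$-unital $J$ or $K$, one would reduce to the $\sigma$-unital case by exploiting that property $(\Lambda_n)$ is a condition on a finite tuple of multipliers, each of which can be absorbed into a $\sigma$-unital hereditary sub-\ca{} of the ambient algebra.
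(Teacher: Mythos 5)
Your overall architecture matches the paper's: the same extension $0 \to J\cap\Cpct \to J \to J/(J\cap\Cpct) \to 0$, the same appeal to \autoref{prp:xrrOfExtension} to get $\xrr(J)\leq 1$, and the same final step via $\rr(J)=0$, $K_1(J)=0$ and \autoref{prp:xrr0SufficientCond}. Your argument for $K_1(J)=0$ (Brown's stabilization theorem, making a $\sigma$-unital hereditary sub-\ca{} stably isomorphic to the ideal $\overline{\Bdd J\Bdd}\in\{0,\Cpct,\Bdd\}$ it generates, followed by continuity of $K$-theory over the directed family of $\sigma$-unital hereditary sub-\ca{s}) is a legitimate self-contained alternative to the paper's citation of \cite[Theorem~2.1]{Zha90TrivialK1Flow}.

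The genuine gap is in how you obtain $\xrr(K)\leq 1$ for hereditary $K\subseteq\Calk$. Hereditary sub-\ca{s} of $\Calk$ (and of $\Bdd$) need not be $\sigma$-unital, and \autoref{prp:rrMultSimple} is stated only for $\sigma$-unital algebras. Your proposed reduction --- absorbing a finite tuple of multipliers into a $\sigma$-unital hereditary sub-\ca{} --- does not work as stated: the elements to be perturbed in property $(\Lambda_n)$ live in $M(K)$, and a proper $\sigma$-unital hereditary sub-\ca{} $K_0\subsetneq K$ cannot contain an approximate unit for $K$, so there is no induced map relating $M(K)$ to $M(K_0)$ (compare \autoref{prp:ReduceToSeparable}, which crucially assumes $\sigma$-unitality of the ambient algebra, and \autoref{rmk:LargerBPCounterexample} for how pathological $M(K)$ can be otherwise). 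Since part~(1) feeds the quotient $J/(J\cap\Cpct)$, which likewise need not be $\sigma$-unital, the gap propagates there too. The paper avoids the issue by citing \cite[Proposition~5.11(2)]{Thi23arX:RRExt}, which gives $\xrr\leq 1$ for \emph{all} simple, purely infinite \ca{s} with no $\sigma$-unitality hypothesis; replacing your appeal to \autoref{prp:rrMultSimple} by that reference (and, for $J\cap\Cpct$, by \cite[Proposition~5.9(1)]{Thi23arX:RRExt}) closes the gap, after which the rest of your argument goes through.
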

\begin{proof}
If $K$ is a hereditary sub-\ca{} of $Q$, then $K$ is simple and purely infinite and therefore $\xrr(K) \leq 1$ by \cite[Proposition~5.11(2)]{Thi23arX:RRExt}.
Now, if $J$ is a hereditary sub-\ca{} of $\Bdd$, then we obtain an extension
\[
0 \to J\cap\Cpct \to J \to J/(J\cap\Cpct) \to 0.
\]
Note that $J\cap\Cpct$ is a hereditary sub-\ca{} of $\Cpct$ and therefore isomorphic to~$\Cpct$ or to a complex matrix algebra.
In either case, we have $\xrr(J\cap\Cpct)=0$, for example by \cite[Proposition~5.9(1)]{Thi23arX:RRExt}.
Further, $J/(J\cap\Cpct)$ is a hereditary sub-\ca{} of $Q$, and thus $\xrr(J/(J\cap\Cpct)) \leq 1$ as shown above.
Applying that the extension real rank does not increase when passing to extensions, \autoref{prp:xrrOfExtension}, we get
\[
\xrr(J) \leq \max\big\{ \xrr(J\cap\Cpct), \xrr(J/(J\cap\Cpct)) \big\} \leq 1.
\]
Since $\rr(\Bdd)=0$, and since real rank zero passes to hereditary sub-\ca{s} by \cite[Corollary~2.8]{BroPed91CAlgRR0}, we also have $\rr(J)=0$.
Further, we have $K_1(J)=0$ by \cite[Theorem~2.1]{Zha90TrivialK1Flow}.
Then $\xrr(J)=0$ by \autoref{prp:xrr0SufficientCond}.
\end{proof}

\begin{prp}
\label{prp:rrMultExtByKK}
Let $0 \to \Cpct \to E \to B \to 0$ be an extension of \ca{s} and assume that $E$ is $\sigma$-unital.
Then
\[
\rr(M(E))
= \rr(M(B)),
\]
and
\[
\rr(M(B)/B) \leq \rr(M(E)/E) \leq \max\big\{1, \rr(M(B)/B) \big\}.
\]
\end{prp}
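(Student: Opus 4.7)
The plan is to reduce the statement to the two extensions produced by the machinery of \autoref{pgr:MultExt} and then apply the extension real rank estimate.

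First, I would invoke \autoref{pgr:MultExt} and \autoref{prp:KernelMultExt} with $A = \Cpct$. Since $E$ is $\sigma$-unital, this gives surjective morphisms $\bar\pi\colon M(E) \to M(B)$ and $\bar{\bar\pi}\colon M(E)/E \to M(B)/B$, and identifying $M(\Cpct) = \Bdd$ and $M(\Cpct)/\Cpct = Q$, we obtain extensions
\[
0 \to J \to M(E) \to M(B) \to 0 \andSep 0 \to K \to M(E)/E \to M(B)/B \to 0,
\]
where $J$ is a hereditary sub-\ca{} of $\Bdd$ and $K = J/\Cpct$ is a hereditary sub-\ca{} of $Q$.

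Next, I would apply \autoref{prp:xrrHerBdd} to conclude $\xrr(J) = 0$ and $\xrr(K) \leq 1$. Since $\rr(\Bdd) = 0$ and real rank zero passes to hereditary sub-\ca{s} by \cite[Corollary~2.8]{BroPed91CAlgRR0}, we also have $\rr(J) = 0$; similarly, $\rr(Q) = 0$ (as $Q$ is simple and purely infinite) gives $\rr(K) = 0$.

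Finally, I would apply \autoref{prp:EstimateWithXRR} to each of the two extensions. For the first,
\[
\rr(M(B)) = \max\{\rr(J), \rr(M(B))\} \leq \rr(M(E)) \leq \max\{\xrr(J), \rr(M(B))\} = \rr(M(B)),
\]
yielding $\rr(M(E)) = \rr(M(B))$. For the second,
\[
\rr(M(B)/B) = \max\{\rr(K), \rr(M(B)/B)\} \leq \rr(M(E)/E) \leq \max\{\xrr(K), \rr(M(B)/B)\} \leq \max\{1, \rr(M(B)/B)\},
\]
which is the desired estimate. There is no real obstacle here: all the technical work has been done in \autoref{prp:xrrHerBdd} (which in turn rests on the triviality of $K_1$ for hereditary subalgebras of $\Bdd$ and on the fact that $\xrr \leq 1$ for simple purely infinite \ca{s}), and in the general extension estimate \autoref{prp:EstimateWithXRR}.
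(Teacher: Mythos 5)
Your proposal is correct and follows essentially the same route as the paper: pass to the two extensions from \autoref{pgr:MultExt} and \autoref{prp:KernelMultExt}, apply \autoref{prp:xrrHerBdd} to get $\xrr(J)=0$ and $\xrr(K)\leq 1$, and conclude via \autoref{prp:EstimateWithXRR}. The extra observations that $\rr(J)=0$ and $\rr(K)=0$ are harmless but unnecessary, since the lower bounds already follow from $\max\{\rr(J),\rr(M(B))\}\leq\rr(M(E))$ without computing $\rr(J)$ or $\rr(K)$.
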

\begin{proof}
By \autoref{pgr:MultExt} and \autoref{prp:KernelMultExt}, we have extensions
\[
0 \to J \to M(E) \to M(B) \to 0, \andSep
0 \to K \to M(E)/E \to M(B)/B \to 0,
\] 
where $J$ is a hereditary sub-\ca{} of $\Bdd$, and $K$ is a hereditary sub-\ca{} of $Q$.
By \autoref{prp:xrrHerBdd}, we have $\xrr(J) = 0$ and $\xrr(K) \leq 1$, and now the result follows from \autoref{prp:EstimateWithXRR}.
\end{proof}

\begin{exa}
\label{exa:SeparableBPCounterexample}
We construct a concrete separable, nuclear counterexample to the second and third Brown-Pedersen conjectures.
Let $B$ be the stable Kirchberg algebra in the UCT class and with $K_0(B) = 0$ and $K_1(B) \cong \ZZ$.
(This algebra is unique by the Kirchberg-Phillips classification theorem, \cite[Theorem~8.4.1]{Ror02Classification}, and it exists by \cite[Proposition~4.3.3]{Ror02Classification}.)

Next, we compute the extension group $\Ext(B,\Cpct)$.
Applying \cite[Proposition~17.6.5]{Bla98KThy} at the first step, and using the Universal Coefficient Theorem (\cite[Theorem~23.1.1]{Bla98KThy}) at the second step, we have
\[
\Ext(B,\Cpct)
\cong KK^1(B,\Cpct)
\cong \Hom(K_\ast(B),K_\ast(\Cpct))
\cong \Hom(\ZZ,\ZZ)
\cong \ZZ.
\]

Using Voiculescu's theorem on the existence of absorbing extensions (\cite[Theorem~15.12.3]{Bla98KThy}), we realize the element $1 \in \ZZ \cong \Ext(B,\Cpct)$ by an essential, nonunital extension
\[
0 \to \Cpct \to A \to B \to 0.
\]

We show that $A$ is a separable, nuclear \ca{} with real rank zero, trivial $K$-theory, and such that
\[
\rr(M(A)) = \rr(M(A)/A) = 1.
\]

First, $A$ is an extension of separable, nuclear \ca{s} and therefore separable and nuclear itself;
see \cite[Proposition~IV.3.1.3]{Bla06OpAlgs}.
Further, since $\Cpct$ and $B$ have real rank zero, and since the induced map $K_0(A) \to K_0(B)$ is surjective, it follows that $A$ has real rank zero;
see \cite[Proposition~4]{LinRor95ExtLimitCircle}.

Since the extension realizes the class $1 \in \ZZ \cong \Ext(B,\Cpct)$, the associated index map $\ZZ \cong K_1(B) \to K_0(\Cpct) \cong \ZZ$ is an isomorphism.
Using the six-term exact sequence in $K$-theory (\cite[Corollary~V.1.2.22]{Bla06OpAlgs}), we get $K_0(A) = K_1(A) = 0$.

Using \autoref{prp:rrMultExtByKK} at the first step, and applying \autoref{prp:rrMultStableSimple} at the second step (using that $K_1(B)\neq 0$), we have
\[
\rr(M(A)) = \rr(M(B)) = 1.
\]

To see that $\rr(M(A)/A)=1$, we consider the extension
\[
0 \to A \to M(A) \to M(A)/A \to 0.
\]

Since the real rank does not increase when passing to quotients, we deduce that $\rr(M(A)/A) \leq 1$;
see \autoref{prp:EstimateWithXRR}.
To reach a contradiction, assume that $\rr(M(A)/A)=0$.
Using the six-term exact sequence in $K$-theory, and using that $K_1(A)=0$, we see that the map $K_0(M(A)) \to K_0(M(A)/A))$ is surjective, and then $\rr(M(A))=0$ by \cite[Proposition~4]{LinRor95ExtLimitCircle}, which is the desired contradiction.
Thus, $M(A)/A$ does not have real rank zero, and so $\rr(M(A)/A)=1$.
\end{exa}

\begin{rmk}
\label{rmk:LargerBPCounterexample}
In \cite{Osa93CounterexBrownPedConj}, Osaka found non-$\sigma$-unital counterexamples to the second and third Brown-Pedersen conjectures. 
Indeed, while for a $\sigma$-compact, locally compact, Hausdorff space $X$, we have $\locdim(X)=\dim(\beta X)$, there exists a non-$\sigma$-compact, locally compact, Hausdorff space $Y$ with $\locdim(Y)=0$ and $\dim(\beta Y) \geq 1$;
see \cite{Osa93CounterexBrownPedConj} and \cite[Section~3.3]{DowHar22ZeroDimFSpNotStrZeroDim}.
Then the commutative \ca{} $C_0(Y)$ satisfies
\[
\rr(C_0(Y)) = 0, \quad
K_1(C_0(Y)) = 0, \andSep
\rr(M(C_0(Y))) \geq 1.
\]

It is a general phenomenon that multiplier algebras of non-$\sigma$-unital \ca{s} can behave rather strange.
For example, there exists a non-$\sigma$-unital \ca{} $A$ such that $M(A)$ agrees with the minimal unitization of $A$; 
see \cite{GhaKos18ExtCpctOpsTrivMult}.
In this case, the corona algebra $M(A)/A$ is isomorphic to $\mathbb{C}$, and $A$ is complemented in $M(A)$ as a Banach space. 

On the other hand, if $B$ is a nonunital, $\sigma$-unital \ca{}, then $B$ is not complemented in $M(B)$ as a Banach space (\cite[Corollary~3.7]{Tay72GenPhillipsThm}), and the corona algebra $M(B)/B$ is not separable (\cite[Corollary~2, Theorem~13]{Ped86SAW}).

Therefore, $\sigma$-unitality is a common and natural assumption when working with multiplier algebras.
\end{rmk}

As another major application of \autoref{prp:rrMultExtByKK}, we compute the real rank of the stable multiplier and stable corona algebra of~$\Bdd$.
Since $\Bdd\otimes\Cpct$ has real rank zero and trivial $K_1$-group, this provides another natural counterexample to the second and third Brown-Pedersen conjectures.

\begin{thm}
\label{prp:rrStableMultBdd}
We have 
\[
\rr( M(\Bdd\otimes\Cpct) )
= \rr( M(\Bdd\otimes\Cpct)/(\Bdd\otimes\Cpct) )
= 1.
\] 

Further, we have $\xrr(\Bdd\otimes\Cpct)=0$.
Thus, given any extension
\[
0 \to \Bdd\otimes\Cpct \to E \to B \to 0,
\]
we have $\rr(E) = \rr(B)$.
\end{thm}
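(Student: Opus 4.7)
The plan is to combine the Calkin extension $0 \to \Cpct \to \Bdd \to \Calk \to 0$ with \autoref{prp:rrMultExtByKK}, and then sharpen the corona bound via the $K$-theoretic obstruction used in \autoref{exa:SeparableBPCounterexample}.

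First, tensoring the Calkin extension with $\Cpct$ and using $\Cpct\otimes\Cpct\cong\Cpct$ yields the extension
\[
0 \to \Cpct \to \Bdd\otimes\Cpct \to \Calk\otimes\Cpct \to 0.
\]
Since $\Bdd\otimes\Cpct$ is $\sigma$-unital, \autoref{prp:rrMultExtByKK} gives $\rr(M(\Bdd\otimes\Cpct))=\rr(M(\Calk\otimes\Cpct))$ together with $\rr(M(\Bdd\otimes\Cpct)/(\Bdd\otimes\Cpct)) \leq \max\big\{1,\rr(M(\Calk\otimes\Cpct)/(\Calk\otimes\Cpct))\big\}$. By \autoref{exa:StableMultCalkin} the right-hand values are $1$ and $0$, so $\rr(M(\Bdd\otimes\Cpct))=1$ and $\rr(M(\Bdd\otimes\Cpct)/(\Bdd\otimes\Cpct)) \leq 1$.

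For the matching lower bound on the corona, we mimic \autoref{exa:SeparableBPCounterexample}: since $\Bdd$ is a von Neumann algebra we have $\rr(\Bdd\otimes\Cpct)=0$ and $K_1(\Bdd\otimes\Cpct)=0$. If the corona had real rank zero, then applying the six-term exact sequence to $0 \to \Bdd\otimes\Cpct \to M(\Bdd\otimes\Cpct) \to M(\Bdd\otimes\Cpct)/(\Bdd\otimes\Cpct) \to 0$ and using the vanishing of $K_1$ of the ideal, the induced map $K_0(M(\Bdd\otimes\Cpct)) \to K_0(M(\Bdd\otimes\Cpct)/(\Bdd\otimes\Cpct))$ would be surjective, and then \cite[Proposition~4]{LinRor95ExtLimitCircle} would force $\rr(M(\Bdd\otimes\Cpct))=0$, contradicting the first paragraph. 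Hence $\rr(M(\Bdd\otimes\Cpct)/(\Bdd\otimes\Cpct))=1$.

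Finally, the general inequality $\xrr(A) \leq \rr(M(A))$ from \cite[Proposition~3.11]{Thi23arX:RRExt} gives $\xrr(\Bdd\otimes\Cpct) \leq 1$; combined with $\rr(\Bdd\otimes\Cpct)=0$ and $K_1(\Bdd\otimes\Cpct)=0$, \autoref{prp:xrr0SufficientCond} improves this to $\xrr(\Bdd\otimes\Cpct)=0$. For an arbitrary extension $0 \to \Bdd\otimes\Cpct \to E \to B \to 0$, \autoref{prp:EstimateWithXRR} then sandwiches $\rr(B) \leq \rr(E) \leq \max\{0,\rr(B)\}=\rr(B)$. The only nontrivial step is the lower bound in the second paragraph: \autoref{prp:rrMultExtByKK} by itself gives only $\rr(M(\Bdd\otimes\Cpct)/(\Bdd\otimes\Cpct)) \leq 1$, and the $K$-theoretic obstruction is needed to exclude real rank zero.
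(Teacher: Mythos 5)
Your proposal is correct, and for the value of $\rr(M(\Bdd\otimes\Cpct))$ and for $\xrr(\Bdd\otimes\Cpct)=0$ it coincides with the paper's proof step for step: the same tensored Calkin extension, \autoref{prp:rrMultExtByKK} combined with \autoref{exa:StableMultCalkin}, then \cite[Proposition~3.11]{Thi23arX:RRExt} and \autoref{prp:xrr0SufficientCond}, and finally \autoref{prp:EstimateWithXRR} for the statement about arbitrary extensions. The one place where you diverge is the lower bound for the corona: you rule out real rank zero by the six-term sequence and \cite[Proposition~4]{LinRor95ExtLimitCircle}, i.e.\ the argument of \autoref{exa:SeparableBPCounterexample}, whereas the paper applies \autoref{prp:EstimateWithXRR} to the extension $0 \to \Bdd\otimes\Cpct \to M(\Bdd\otimes\Cpct) \to M(\Bdd\otimes\Cpct)/(\Bdd\otimes\Cpct) \to 0$ using the already-established fact $\xrr(\Bdd\otimes\Cpct)=0$, which yields both inequalities at once and shows directly that the corona and the multiplier algebra have the same real rank. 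Both arguments are valid; the paper's is marginally more economical because $\xrr(\Bdd\otimes\Cpct)=0$ is needed anyway, while yours makes the $K$-theoretic obstruction explicit and does not rely on the order in which the claims are proved.
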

\begin{proof}
By tensoring the extension $0 \to \Cpct \to \Bdd \to \Calk \to 0$ by $\Cpct$, and identifying $\Cpct\otimes\Cpct$ with $\Cpct$, we obtain the extension
\[
0 \to \Cpct \to \Bdd\otimes\Cpct \to Q\otimes\Cpct \to 0.
\]
Using \autoref{prp:rrMultExtByKK} at the first step, and \autoref{exa:StableMultCalkin} at the second step, we get
\[
\rr( M(\Bdd\otimes\Cpct) )
= \rr( M(Q\otimes\Cpct) ) 
= 1.
\]

By \cite[Proposition~3.11]{Thi23arX:RRExt}, the extension real rank is dominated by the real rank of the multiplier algebra.
Thus, we have $\xrr(\Bdd\otimes\Cpct) \leq 1$.
We also have $\rr(\Bdd\otimes\Cpct)=0$ and $K_1(\Bdd\otimes\Cpct)=0$, and so $\xrr(\Bdd\otimes\Cpct)=0$ by \autoref{prp:xrr0SufficientCond}.
The statement about the real rank of extensions by $\Bdd\otimes\Cpct$ follows from \autoref{prp:EstimateWithXRR}.

Finally, to compute the real rank of the stable corona algebra of $\Bdd$, we consider the extension
\[
0 \to \Bdd\otimes\Cpct \to M(\Bdd\otimes\Cpct) \to M(\Bdd\otimes\Cpct)/(\Bdd\otimes\Cpct) \to 0.
\]
Using at the first step that $\xrr(\Bdd\otimes\Cpct)=0$ (and \autoref{prp:EstimateWithXRR}), we get
\[
\rr( M(\Bdd\otimes\Cpct)/(\Bdd\otimes\Cpct) )
= \rr( M(\Bdd\otimes\Cpct) ) 
= 1. \qedhere
\]
\end{proof}

\begin{rmk}
In the setting of \autoref{prp:rrMultExtByKK}, if $\rr(M(B)/B)=0$, then the real rank of $M(E)/E$ takes the value $0$ or $1$, and both are possible.
The value $0$ arises for example from trivial extensions.
On the other hand, for the extension 
\[
0 \to \Cpct\to \Bdd\otimes\Cpct \to \Calk\otimes\Cpct \to 0
\]
we have
\[
\rr( M(\Bdd\otimes\Cpct)/(\Bdd\otimes\Cpct) ) = 1, \andSep
\rr( M(\Calk\otimes\Cpct)/(\Calk\otimes\Cpct) ) = 0
\]
by \autoref{prp:rrStableMultBdd} and \autoref{exa:StableMultCalkin}.
\end{rmk}

\begin{exa}
\label{exa:tensMax}
We have
\[
\rr( \Bdd\tensMax \Bdd )
= \rr( \Bdd\tensMax \Calk )
= \max\big\{ 1, \rr( \Calk \tensMax \Calk ) \big\}.
\]
Indeed, since the maximal tensor product preserves short exact sequences (\cite[II.9.6.6]{Bla06OpAlgs}), we have an extension
\[
0 \to \Bdd\tensMax\Cpct \to \Bdd\tensMax\Bdd \to \Bdd\tensMax\Calk \to 0,
\]
to which we may apply \autoref{prp:rrStableMultBdd} to obtain that $\Bdd\tensMax\Bdd$ and $\Bdd\tensMax\Calk$ have the same real rank.

Next, we consider the extension
\[
0 \to \Cpct\tensMax Q \to \Bdd\tensMax Q \to Q\tensMax Q \to 0.
\]
Using that $\xrr( \Calk\otimes\Cpct) = 1$ by \autoref{exa:StableMultCalkin}, and applying \autoref{prp:EstimateWithXRR}, we have
\[
\rr( \Calk\tensMax\Calk ) 
\leq \rr( \Bdd\tensMax\Calk ) 
\leq \max\big\{ 1, \rr( \Calk\tensMax\Calk ) \big\}.
\]
On the other hand, applying \cite[Corollary~1.2]{Osa99NonzeroRR} at the first step, and using at the second step that the minimal tensor product $\Bdd\otimes\Bdd$ is a quotient of $\Bdd\tensMax\Bdd$, we have
\[
1
\leq \rr( \Bdd\otimes\Bdd )
\leq \rr( \Bdd\tensMax\Bdd )
= \rr( \Bdd\tensMax\Calk ).
\]
We deduce that the real rank of $\Bdd\tensMax\Calk$ is equal to $\max\{ 1, \rr( \Calk\tensMax\Calk ) \}$.
\end{exa}

\begin{qst}
What is the real rank of $Q\tensMax Q$?
\end{qst}

If $\rr(Q\tensMax Q)=0$, then \autoref{exa:tensMax} would imply that $\rr(\Bdd\tensMax \Bdd)=\rr(\Bdd\otimes\Bdd)=1$, which would answer \cite[Question~3.3]{Osa99NonzeroRR}.
I suspect, however, that the real rank of $Q\tensMax Q$ is nonzero.
On the other hand, note that the minimal tensor product $Q\otimes Q$ is simple, unital and purely infinite, whence $\rr(Q\otimes Q)=0$ by Zhang's theorem \cite[Proposition~V.3.2.12]{Bla06OpAlgs}.

\section{Extensions by simple, purely infinite C*-algebras}
\label{sec:ExtSimplePI}

In this section, we consider extensions
\[
0 \to A \to E \to B \to 0
\]
where $E$ is $\sigma$-unital, and $A$ is simple and purely infinite.
We solve \autoref{pbm:xrrHerMA} for~$A$ (\autoref{prp:xrrHerMultPI}) and deduce in \autoref{prp:rrMultExtSimplePI} that
\[
\rr(M(B)) \leq \rr(M(E)) \leq \max\big\{ 1,\rr(M(B) \big\}.
\]

Thus, we have $\rr(M(E)) = \rr(M(B))$, unless $\rr(M(B))=0$ while $\rr(M(E))=1$, and we will see in \autoref{exa:rrMultExtSimplePI-ExceptionalCase} that this exceptional case can occur.
We also show that $\rr(M(E)) = \rr(M(B))$ if we additionally assume that $K_1(A)=0$.
We obtain analogous results for the real rank of the corona algebra $M(E)/E$.

\medskip

Since we do not want to assume that $A$ is separable or $\sigma$-unital, we first devise a method to reduce the problem of computing $\rr(M(E))$ to suitable subextensions $0 \to A' \to E' \to B' \to 0$, where $A'$ is separable.
To that end, we show that for a $\sigma$-unital \ca{} $E$, the multiplier algebra $M(E)$ is exhausted by the images of maps $M(D) \to M(E)$ for separable sub-\ca{s} $D \subseteq E$ containing an approximate unit of $E$.

Here we use that a morphism $\varphi \colon D \to E$ between \ca{s} naturally induces a unital morphism $\bar{\varphi} \colon M(D) \to M(E)$ if the image of $\varphi$ contains an approximate unit for $E$.

\begin{lma}
\label{prp:ReduceToSeparable}
Let $E$ be a $\sigma$-unital \ca{}, and let $L \subseteq M(E)$ be a separable sub-\ca{}.
Then there exists a separable sub-\ca{} $D \subseteq E$ containing an approximate unit for $E$ and such that the image of the naturally induced map $M(D) \to M(E)$ contains $L$.
\end{lma}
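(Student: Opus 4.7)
The plan is a standard exhaustion argument: construct $D$ as the closure of an increasing chain $D_0 \subseteq D_1 \subseteq \cdots$ of separable sub-\ca{s} of $E$, arranged so that $D$ is invariant under left and right multiplication by $L$ and contains a countable approximate unit of $E$.

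Concretely, I would fix a countable approximate unit $(h_n)_{n\in\NN}$ of $E$ (available by $\sigma$-unitality) together with a countable dense subset $L_0 \subseteq L$. Let $D_0$ be the separable sub-\ca{} of $E$ generated by $\{h_n : n \in \NN\}$. Inductively, given $D_k$, pick a countable dense subset $D_{k,0} \subseteq D_k$ and let $D_{k+1}$ be the separable sub-\ca{} of $E$ generated by
\[
D_k \ \cup \ \{ xd, dx : x \in L_0,\ d \in D_{k,0}\},
\]
noting that $xd$ and $dx$ lie in $E$ since $x \in L \subseteq M(E)$. Setting $D := \overline{\bigcup_k D_k}$, this $D$ is separable, contains every $h_n$, and satisfies $LD \subseteq D$ and $DL \subseteq D$ by continuity of multiplication combined with the density of $L_0$ in $L$ and of $\bigcup_k D_{k,0}$ in $D$.

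Because $D$ contains an approximate unit of $E$, the inclusion $D \hookrightarrow E$ is nondegenerate and extends canonically to a unital morphism $\bar{\iota} \colon M(D) \to M(E)$. For each $x \in L$, the inclusions $xD \subseteq D$ and $Dx \subseteq D$ exhibit $x$ as a double centralizer on $D$, hence define an element $\tilde{x} \in M(D)$; the identity $\bar{\iota}(\tilde{x}) = x$ then follows because both sides act as $da \mapsto xda$ on the dense subset $DE \subseteq E$.

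The main (rather mild) obstacle is the bookkeeping in the inductive step: one must simultaneously close $D$ under the \ca{} operations and under left and right multiplication by the separable sub-\ca{} $L$, while keeping each $D_k$ separable. Once that is arranged, both the existence of $\bar{\iota}$ and the identification $\bar{\iota}(\tilde{x}) = x$ are standard consequences of the nondegeneracy of $D$ in $E$.
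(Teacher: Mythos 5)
Your proof is correct and takes essentially the same approach as the paper: enlarge a separable sub-\ca{} of $E$ containing a countable approximate unit until it is invariant under left and right multiplication by $L$, and then identify each element of $L$ with a multiplier of $D$ mapping back to itself under $M(D)\to M(E)$. The only difference is cosmetic: the paper achieves the invariance in a single generation step, adjoining $e_n a$ and $a e_n$ for $a$ in a countable dense subset of $L$ and using the approximation $ad \approx a e_n d$, whereas you iterate the closure countably many times; both versions work.
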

\begin{proof}
Let $(e_n)_n$ be a countable approximate unit for $E$, and let $L_0 \subseteq L$ be a countable dense subset.
Let $D$ be the sub-\ca{} of $E$ generated by 
\[
\big\{ e_n : n \in\NN \big\} 
\cup \big\{ e_n a : n \in \NN, a \in L_0 \big\} 
\cup \big\{ a e_n : n \in \NN, a \in L_0 \big\}.
\]
Then $D$ is a separable sub-\ca{} of $E$ containing the approximate unit $(e_n)_n$.
The inclusion $\iota \colon D \to E$ extends to an injective homomorphism $\iota^{**} \colon D^{**} \to E^{**}$.
We realize the multiplier algebra of $D$ as 
\[
M(D) = \big\{ x \in D^{**} : xD + Dx \subseteq D \big\},
\]
and similarly for $M(E)$.
Since $D$ contains an approximate unit for $E$, the map $\iota^{**}$ sends $M(D)$ into $M(E)$.
We may therefore view $M(D)$ as a subalgebra of $M(E)$, as shown in the following diagram:
\[
\xymatrix{ 
E \ar@{}[r]|{\subseteq} 
& M(E) \ar@{}[r]|{\subseteq} 
& E^{**} \\
D \ar@{}[r]|{\subseteq} \ar@{}[u]|{\subseteqRotatedUp}
& M(D) \ar@{}[r]|{\subseteq} \ar@{}[u]|{\subseteqRotatedUp}
& D^{**}. \ar@{}[u]|{\subseteqRotatedUp}
}
\]

To verify $L \subseteq M(D)$, it suffices to show that $M(D)$ contains $L_0$.
Let $a \in L_0$, $d \in D$, and $\varepsilon>0$.
Since $(e_n)_n$ is an approximate unit for $E$ (and hence for~$D$), we obtain $n \in \NN$ such that $\| e_nd  - d \| < \varepsilon/\|a\|$. 
Then
\[
\| ad - ae_n d \| < \varepsilon.
\]
Since $ae_n$ belongs to $D$ by construction, we have $ae_nd \in D$.
Thus, $ad$ has distance less than $\varepsilon$ to $D$.
Since this holds for every $\varepsilon>0$, we get $ad \in D$, and thus $aD \subseteq D$.
Similarly, we obtain $Da \subseteq D$, and thus $a \in M(D)$.
\end{proof}

\begin{pgr}
\label{pgr:LS}
Let $A$ be a (nonseparable) \ca{}.
We use $\Sep(A)$ to denote the collection of separable sub-\ca{s} of $A$, equipped with the partial order given by inclusion.
Following the terminology from model theory, we say that a family $\mathcal{F} \subseteq \Sep(A)$ is a \emph{club} if it is $\sigma$-complete (for every countable, upward directed subset $\mathcal{F}_0 \subseteq \mathcal{F}$, we have $\overline{\bigcup\mathcal{F}_0} \in \mathcal{F}$) and cofinal (for every $B \in \Sep(A)$ there exists $C \in \mathcal{F}$ with $B \subseteq C$);
see \cite[Section~6.2]{Far19BookSetThyCAlg}.

A property $\mathcal{P}$ for \ca{s} is said to satisfy the L{\"o}wenheim-Skolem condition if for every \ca{} $A$ satisfying $\mathcal{P}$ there is a club in $\Sep(A)$ of (separable) \ca{s} satisfying $\mathcal{P}$.
Many common properties of \ca{s} satisfy the L{\"o}wenheim-Skolem condition, including properties that are axiomatizable in model theory \cite[Section~7.3]{Far19BookSetThyCAlg}, and properties that are `separably inheritable' in the sense of Blackadar \cite[Definition~II.8.5.1]{Bla06OpAlgs};
see, for example, \cite[Paragraph~4.5]{Thi23arX:RRExt}.

A countable intersection of clubs in $\Sep(A)$ is again a club, which shows that the conjunction of countably many properties with the L{\"o}wenheim-Skolem condition also satisfies the L{\"o}wenheim-Skolem condition.
\end{pgr}

\begin{lma}
\label{prp:rrMultFromSepSub}
Let $A$ be a $\sigma$-unital \ca, and let $n \in \NN$.
Then:

(1)
Assume that for every separable sub-\ca{} $B \subseteq A$ there exists a sub-\ca{} $D \subseteq A$ with $B \subseteq D$ and $\rr(M(D)) \leq n$.
Then $\rr(M(A)) \leq n$.

(2)
Assume that for every separable sub-\ca{} $B \subseteq A$ there exists a sub-\ca{} $D \subseteq A$ such that $B \subseteq D$ and $\rr(M(D)/D) \leq n$.
Then $\rr(M(A)/A) \leq n$.
\end{lma}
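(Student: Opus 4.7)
My plan is to use \autoref{prp:ReduceToSeparable} to reduce the approximation problem in $M(A)$ (respectively in $M(A)/A$) to an approximation problem inside $M(D)$ (respectively $M(D)/D$) for a sub-\ca{} $D \subseteq A$ supplied by the hypothesis, and then push the result back up. The key structural input is that whenever $D \subseteq A$ contains an approximate unit for $A$, the canonical map $M(D) \to M(A)$ is a \emph{unital} isometric *-embedding, and hence the induced map $M(D)/D \to M(A)/A$ is a unital *-homomorphism. Such maps are norm-decreasing and invertibility-preserving, which is exactly what is needed to transfer approximations and unimodularity.

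For part (1), given a self-adjoint tuple $(x_0,\ldots,x_n) \in M(A)_\sa^{n+1}$ and $\varepsilon>0$, I would first apply \autoref{prp:ReduceToSeparable} to the separable \ca{} $L$ generated by $x_0,\ldots,x_n$ in $M(A)$, producing a separable $D' \subseteq A$ containing an approximate unit for $A$ with $x_0,\ldots,x_n \in M(D') \subseteq M(A)$. Then apply the hypothesis to obtain $D \supseteq D'$ with $\rr(M(D)) \leq n$. Since $D \supseteq D'$, the algebra $D$ also contains an approximate unit for $A$, so $M(D)$ embeds as a unital sub-\ca{} of $M(A)$ containing all the $x_i$. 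Applying $\rr(M(D)) \leq n$ inside $M(D)$, we obtain a unimodular self-adjoint tuple $(y_0,\ldots,y_n) \in M(D)_\sa^{n+1}$ with $\|x_i - y_i\| < \varepsilon$; its image in $M(A)$ is then unimodular and provides the desired approximation.

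Part (2) proceeds analogously, after the preliminary step of lifting a given self-adjoint tuple $(u_0,\ldots,u_n) \in (M(A)/A)_\sa^{n+1}$ to a self-adjoint tuple $(x_0,\ldots,x_n) \in M(A)_\sa^{n+1}$. Running the same construction with $\rr(M(D)/D) \leq n$ in place of $\rr(M(D)) \leq n$, applying the approximation inside $M(D)/D$ to the images of the $x_i$ there, and then pushing the resulting unimodular tuple forward through the unital *-homomorphism $M(D)/D \to M(A)/A$ yields the required approximation of $(u_0,\ldots,u_n)$. The one potentially confusing point is that the map $M(D)/D \to M(A)/A$ need not be injective (an element of $M(D) \cap A$ that fails to lie in $D$ maps to zero), but injectivity is not used anywhere; only unitality is needed, which suffices to preserve unimodularity upon pushforward. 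This is the one subtle point, but it is more a matter of bookkeeping than a genuine obstacle.
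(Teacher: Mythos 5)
Your proposal is correct and follows essentially the same route as the paper's proof: apply \autoref{prp:ReduceToSeparable} to the separable sub-\ca{} generated by the given tuple, enlarge to a $D$ supplied by the hypothesis (noting $D$ still contains an approximate unit for $A$, so the unital morphism $M(D)\to M(A)$ is defined and its image contains the tuple), approximate by a unimodular tuple inside $M(D)$ (resp.\ $M(D)/D$), and push forward. The paper only writes out part (1) and declares (2) analogous, so your explicit handling of the lift and of the possibly non-injective map $M(D)/D \to M(A)/A$ is a welcome but not essentially different elaboration.
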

\begin{proof}
We only verify~(1).
The proof of~(2) is analogous.
Let $(a_0,\ldots,a_n) \in M(A)_\sa^{k+1}$ be a self-adjoint $(n+1)$-tuple in~$M(A)$, and let $\varepsilon>0$.
We need to find a unimodular tuple $(b_0.\ldots,b_n) \in M(A)_\sa$ such that
\[
\| b_0 - a_0 \| < \varepsilon, \quad \ldots,\quad 
\| b_n - a_n \| < \varepsilon.
\]

By \autoref{prp:ReduceToSeparable}, there exists a separable sub-\ca{} $B \subseteq A$ containing an approximate unit for $A$ such that the image of the induced inclusion $M(B) \to M(A)$ contains $a_0,\ldots,a_k$.
By assumption, we obtain a sub-\ca{} $D \subseteq A$ such that $B \subseteq D$ and $\rr(M(D)) \leq n$.
Then $D$ contains an approximate unit for $A$, and the image of the inclusion $M(D) \to M(A)$ also contains $a_0,\ldots,a_k$.
Using that $\rr(M(D)) \leq n$, we can find the desired tuple $(b_0.\ldots,b_n)$ in $M(D)$.
\end{proof}

\begin{exa}
We can apply \autoref{prp:rrMultFromSepSub} to show that $\Bdd\otimes\Cpct$ contains many \emph{separable} sub-\ca{s} that are counterexamples to the second and third Brown-Pedersen conjectures.
(See \autoref{exa:SeparableBPCounterexample} for a concrete separable and nuclear counterexample.)

By \autoref{prp:rrStableMultBdd}, we have 
\[
\rr(\Bdd\otimes\Cpct)=0, \quad 
K_1(\Bdd\otimes\Cpct)=0, \andSep 
\rr(M(\Bdd\otimes\Cpct)/\Bdd\otimes\Cpct) = 1.
\]

Since `real rank zero' and `vanishing $K_0$-group' each satisfy the L{\"o}wenheim-Skolem condition (\cite[Paragraph~II.8.5.5]{Bla06OpAlgs}), there exists a club $\mathcal{F}$ of separable sub-\ca{s} $A \subseteq \Bdd\otimes\Cpct$ satisfying $\rr(A)=0$ and $K_1(A)=0$.
If every $A \in \mathcal{F}$ satisfied $\rr(M(A)/A)=0$, then \autoref{prp:rrMultFromSepSub} would imply $\rr(M(\Bdd\otimes\Cpct)/\Bdd\otimes\Cpct) = 0$, a contradiction.
Thus, there exists $A \in \mathcal{F}$ such that
\[
\rr(A)=0, \quad 
K_1(A)=0, \andSep 
\rr(M(A)/A) \neq 0.
\]
\end{exa}

We now turn to a technical result that allows us to estimate the real rank of the multiplier and corona algebra of an extension with a nonseparable ideal.

\begin{lma}
\label{prp:rrMultUsingClub}
Let $0 \to A \to E \xrightarrow{\pi} B \to 0$ be an extension of \ca{s}, let $n \in \NN$, and assume that $E$ is $\sigma$-unital.
Then:

(1)
If $A$ contains a club of separable sub-\ca{s} $A' \subseteq A$ such that every hereditary sub-\ca{} $J$ of $M(A')$ satisfies $\xrr(J) \leq n$, then
\[
\rr(M(B))
\leq \rr(M(E)) 
\leq \max\big\{ n, \rr(M(B)) \big\}.
\]

(2)
If $A$ contains a club of separable sub-\ca{s} $A' \subseteq A$ such that every hereditary sub-\ca{} $K$ of $M(A')/A'$ satisfies $\xrr(K) \leq n$, then
\[
\rr(M(B)/B)
\leq \rr(M(E)/E) 
\leq \max\big\{ n, \rr(M(B)/B) \big\}.
\]
\end{lma}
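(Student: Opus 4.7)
The plan. The lower bounds in (1) and (2) are immediate: by \autoref{pgr:MultExt}, $M(B)$ and $M(B)/B$ arise as quotients of $M(E)$ and $M(E)/E$ under the surjections $\bar{\pi}$ and $\bar{\bar{\pi}}$, and quotients do not increase the real rank by \autoref{prp:EstimateBasic}.

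For the upper bound in (1) (the proof of (2) proceeds analogously using $\rr(M(\cdot)/\cdot)$ and \autoref{prp:rrMultFromSepSub}(2)), set $m := \max\{n, \rr(M(B))\}$ and write $r := \rr(M(B))$. By \autoref{prp:rrMultFromSepSub}(1) applied to the $\sigma$-unital \ca{} $E$, it suffices to exhibit, for every separable sub-\ca{} $L \subseteq E$, a sub-\ca{} $D \subseteq E$ containing $L$ with $\rr(M(D)) \leq m$. I would construct such a $D$ by a back-and-forth L{\"o}wenheim-Skolem iteration. The hypothesis gives a club $\mathcal{F}_A \subseteq \Sep(A)$ of separable $A'$ such that every hereditary sub-\ca{} of $M(A')$ has $\xrr \leq n$. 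Separately, the property ``$\sigma$-unital with $\rr(M(\cdot)) \leq r$'' should itself satisfy the L{\"o}wenheim-Skolem condition in the sense of \autoref{pgr:LS}, yielding a club $\mathcal{F}_B \subseteq \Sep(B)$ of separable $B'$ that contain an approximate unit for $B$ and satisfy $\rr(M(B')) \leq r$. Starting from $L$, I iteratively build separable $D_k \subseteq E$, $A'_k \in \mathcal{F}_A$, and $B'_k \in \mathcal{F}_B$ so that $L \subseteq D_0$, every $D_k$ contains an approximate unit for $E$, $D_k \cap A \subseteq A'_k \subseteq D_{k+1}$, and $\pi(D_k) \subseteq B'_k \subseteq \pi(D_{k+1})$ (the last inclusion achieved by lifting generators of $B'_k$ through $\pi$ into $D_{k+1}$).

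Setting $D := \overline{\bigcup_k D_k}$, $A' := \overline{\bigcup_k A'_k}$, and $B' := \overline{\bigcup_k B'_k}$, the $\sigma$-completeness of the two clubs gives $A' \in \mathcal{F}_A$ and $B' \in \mathcal{F}_B$, while an ideal-chase using $A \lhd E$ identifies $D \cap A = A'$ and $\pi(D) = B'$. Then $D$ is $\sigma$-unital and fits into an extension $0 \to A' \to D \to B' \to 0$, so \autoref{pgr:MultExt} and \autoref{prp:KernelMultExt} yield an extension $0 \to J_D \to M(D) \to M(B') \to 0$ with $J_D$ a hereditary sub-\ca{} of $M(A')$. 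The hypothesis gives $\xrr(J_D) \leq n$, and $\rr(M(B')) \leq r$ by the choice of $B'$; applying \autoref{prp:EstimateWithXRR} then produces $\rr(M(D)) \leq \max\{\xrr(J_D), \rr(M(B'))\} \leq m$, which is what was needed. The main obstacles I anticipate are (i) establishing L{\"o}wenheim-Skolem reflection for ``$\rr(M(\cdot)) \leq r$'' on $\sigma$-unital \ca{s} (and the analogous corona statement for (2)), and (ii) arranging the back-and-forth so that the limit honestly satisfies $D \cap A = A'$ and $\pi(D) = B'$ — the first equality is subtle, since in general $A \cap \overline{\bigcup_k D_k}$ may strictly exceed $\overline{\bigcup_k (A \cap D_k)}$, and one must inductively absorb sufficient pieces of an approximate unit for $A$ into the $A'_k$ to close this gap.
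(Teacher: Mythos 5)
Your lower bounds and the overall L{\"o}wenheim--Skolem framework are in the right spirit, but the upper bound rests on a step that is not available: the existence of a club of separable sub-\ca{s} $B' \subseteq B$ with $\rr(M(B')) \leq \rr(M(B))$. You flag this yourself as obstacle~(i), but it is not a removable technicality --- it is the crux. The quantity $\rr(M(B'))$ is a property of the nonseparable algebra $M(B')$ attached to $B'$, and the standard closing-off arguments that produce clubs operate inside $B$ itself; they give no control over unimodular approximation in $M(B')$, since an approximant in $M(B)$ to an element of $M(B') \subseteq M(B)$ has no reason to lie in, or even near, $M(B')$. \autoref{prp:rrMultFromSepSub} only goes the other way (from separable subalgebras up to the ambient algebra), and no reflection statement for $\rr(M(\cdot))$ is proved in this paper or its references. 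Without it your separable $D$ with $\rr(M(D))\le m$ is never produced, so the argument is incomplete. (Your obstacle~(ii), by contrast, is harmless: $D \cap A$ is a closed ideal of the increasing union $D = \overline{\bigcup_k D_k}$, and an ideal of an increasing union equals the closed union of its intersections with the terms, so $D \cap A = \overline{\bigcup_k (D_k \cap A)} = A'$ follows automatically once you arrange $D_k \cap A \subseteq A'_k \subseteq D_{k+1}$.)

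The paper's proof avoids the problematic reflection entirely by arguing tuple by tuple rather than trying to bound $\rr(M(D))$ for separable $D$. Given $(a_0,\dots,a_k) \in M(E)_\sa^{k+1}$ with $k = \max\{n,\rr(M(B))\}$, one first uses $\rr(M(B)) \le k$ on the \emph{full} algebra $M(B)$ to perturb the tuple so that $(\bar{\pi}(a_0),\dots,\bar{\pi}(a_k))$ is unimodular, hence $\sum_j \bar{\pi}(a_j)^2 \ge \delta\, 1_{M(B)}$ for some $\delta>0$. One then chooses, via \autoref{prp:ReduceToSeparable} and the fact that $\{E' \in \Sep(E) : E' \cap A \in S\}$ is a club whenever $S \subseteq \Sep(A)$ is, a separable $D \subseteq E$ containing an approximate unit for $E$ with $a_0,\dots,a_k \in M(D)$ and $D\cap A$ in the given club. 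The lower bound $\delta$ is inherited by $\sum_j \bar{\varrho}(a_j)^2$ in $M(D/(D\cap A))$, so the tuple is unimodular modulo the hereditary kernel $J \subseteq M(D\cap A)$, and property $(\Lambda_k)$ of $J$ (that is, $\xrr(J)\le n \le k$) yields the unimodular perturbation directly inside $M(D) \subseteq M(E)$. Note that this route never needs $\rr(M(D/(D\cap A)))$ at all. To salvage your architecture you would have to prove the reflection of $\rr(M(\cdot)) \le r$ to a club of separable subalgebras, which is not established; otherwise you should switch to this pointwise argument.
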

\begin{proof}
We identify $A$ with an ideal in $E$, and we let $\bar{\pi}\colon M(E) \to M(B)$ be the natural morphism induced by $\pi$.
Since $E$ is $\sigma$-unital, $\bar{\pi}$ is surjective by \cite[Theorem~10]{Ped86SAW}.
It follows that the natural map $M(E)/E \to M(B)/B$ is surjective as well. 
By \autoref{prp:EstimateBasic}, we have
\[
\rr(M(B)) \leq \rr(M(E)), \andSep
\rr(M(B)/B) \leq \rr(M(E)/E).
\]

We now verify the upper bounds.
Set $k := \max\{ n, \rr(M(B)) \}$, which we may assume to be finite.
Let $(a_0,\ldots,a_k) \in M(E)_\sa^{k+1}$ be a self-adjoint $(k+1)$-tuple in~$M(E)$, and let $\varepsilon>0$.
We need to find a unimodular tuple $(b_0.\ldots,b_n) \in M(E)_\sa$ such that
\[
\| b_0 - a_0 \| < \varepsilon, \quad \ldots,\quad 
\| b_k - a_k \| < \varepsilon.
\]

Since $\rr(M(B)) \leq k$, the tuple $(\bar{\pi}(a_0),\ldots,\bar{\pi}(a_k)) \in M(B)^{k+1}_\sa$ can be approximated by a unimodular, self-adjoint tuple, which then may be lifted to a self-adjoint tuple in $M(E)$ close to $(a_0,\ldots,a_k)$.
Thus, without loss of generality, we may assume that $(\bar{\pi}(a_0),\ldots,\bar{\pi}(a_k))$ is unimodular. 

\medskip

By \autoref{prp:ReduceToSeparable}, there exists a separable sub-\ca{} $D \subseteq A$ containing an approximate unit for $E$ such that the image of the induced inclusion $M(D) \to M(E)$ contains $a_0,\ldots,a_k$.
Then $D \cap A$ is a separable sub-\ca{} of $A$.
By \cite[Lemma~3.2(1)]{Thi23grSubhom}, if $S \subseteq \Sep(A)$ is a club of separable sub-\ca{s}, then so is $\{ E' \in \Sep(E) : E' \cap A \in S \}$.
Using the assumption, we may therefore assume that $D \cap A$ has the property that every hereditary sub-\ca{} $J$ of $M(D \cap A)$ satisfies $\xrr(J) \leq n$.
Let $\varrho$ denote the restriction of $\pi$ to $D$.
We have the following inclusion of extensions:
\[
\xymatrix@R-10pt{
0 \ar[r]
& A \ar[r]
& E \ar[r]^{\pi}
& B \ar[r]
& 0 \\
0 \ar[r] 
& D \cap A \ar[r] \ar@{}[u]|{\subseteqRotatedUp}
& D \ar[r]^-{\varrho} \ar@{}[u]|{\subseteqRotatedUp}
& D/(D \cap A) \ar[r] \ar@{}[u]|{\subseteqRotatedUp}
& 0.
}
\]

Since $D$ is separable (and hence $\sigma$-unital), the surjective morphism $\varrho$ naturally induces a surjective morphism $\bar{\varrho} \colon M(D) \to M(D/(D \cap A))$, and as explained in \autoref{pgr:MultExt} and \autoref{prp:KernelMultExt} the kernel $J$ of~$\bar{\varrho}$ is isomorphic to a hereditary sub-\ca{} of $M(D \cap A)$.
We have the following commutative diagram:
\[
\xymatrix@R-10pt{
& & M(E) \ar[r]^{\bar{\pi}}
& M(B) \\
0 \ar[r] 
& J \ar[r] 
& M(D) \ar[r]^-{\bar{\varrho}} \ar@{}[u]|{\subseteqRotatedUp}
& M(D/(D \cap A)) \ar[r] \ar@{}[u]|{\subseteqRotatedUp}
& 0.
}
\]

By construction, we have $\xrr(J) \leq n \leq k$.
Further, the tuple $(a_0,\ldots,a_k)$ belongs to $M(D)^{k+1}_\sa$.
Let us see that $(\bar{\varrho}(a_0),\ldots,\bar{\varrho}(a_k))$ is unimodular.
Since $(\bar{\pi}(a_0),\ldots,\bar{\pi}(a_k))$ is unimodular, there exists $\delta > 0$ such that
\[
\sum_{j=0}^k \bar{\pi}(a_j)^2 \geq \delta 1_{M(B)}.
\]
It follows that
\[
\sum_{j=0}^k \bar{\varrho}(a_j)^2 \geq \delta 1_{M(D/(D \cap A))},
\]
and thus $(\bar{\varrho}(a_0),\ldots,\bar{\varrho}(a_k))$ is unimodular.
Now, since $\xrr(J) \leq k$, it follows from \autoref{dfn:xrr} that $(a_0,\ldots,a_k)$ can be approximated by an unimofular, self-adjoint tuple in $M(D)$.
The image of this tuple in $M(E)$ has the desired properties.
This verifies~(1).
The proof of~(2) is similar.
We omit the details.
\end{proof}

\begin{lma}
\label{prp:xrrHerMultPI}
Let $A$ be a $\sigma$-unital, simple, purely infinite \ca{}.
Then:
\begin{enumerate}
\item
If $J \subseteq M(A)$ is a hereditary sub-\ca{}, then $\xrr(J) \leq 1$.
If we additionally assume that $K_1(A)=0$, then $\xrr(J)=0$.
\item
If $K \subseteq M(A)/A$ is a hereditary sub-\ca{}, then $\xrr(K) \leq 1$.
If we additionally assume that $K_0(A)=0$, then $\xrr(K)=0$.
\end{enumerate}
\end{lma}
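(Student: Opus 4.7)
The plan is to imitate the proof of \autoref{prp:xrrHerBdd}, with the pair $\Cpct \subseteq \Bdd$ replaced by $A \subseteq M(A)$. First I would dispose of the case when $A$ is unital (then $M(A) = A$ and $M(A)/A = 0$, and the statement reduces to $\xrr$-bounds on hereditary sub-\ca{s} of the simple, purely infinite \ca{} $A$ itself, which follow from \cite[Proposition~5.11(2)]{Thi23arX:RRExt} together with \autoref{prp:xrr0SufficientCond}). In the remaining nonunital case, \cite[Theorem~1.2(i)]{Zha92RR0CoronaMultiplier1} gives that $A$ is stable, and then \cite[Corollary~2.6(i)]{Zha92RR0CoronaMultiplier1} shows that $M(A)/A$ is simple and purely infinite.

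For the upper bound in~(2), since hereditary sub-\ca{s} of simple, purely infinite \ca{s} are again simple and purely infinite, $K$ is simple and purely infinite and \cite[Proposition~5.11(2)]{Thi23arX:RRExt} yields $\xrr(K) \leq 1$. For the upper bound in~(1) I would use the extension
\[
0 \to J \cap A \to J \to J/(J \cap A) \to 0,
\]
observing that $J \cap A$ is hereditary in $A$ (so simple and purely infinite) and that $J/(J \cap A) \cong (J + A)/A$ is a hereditary sub-\ca{} of $M(A)/A$. Applying \cite[Proposition~5.11(2)]{Thi23arX:RRExt} to the ideal and part~(2) to the quotient, \autoref{prp:xrrOfExtension} delivers $\xrr(J) \leq 1$.

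For the two equalities $\xrr = 0$ I would invoke \autoref{prp:xrr0SufficientCond}. Real rank zero passes to hereditary sub-\ca{s} by \cite[Corollary~2.8]{BroPed91CAlgRR0}, and \autoref{prp:rrMultSimple} gives $\rr(M(A)/A) = 0$ unconditionally and $\rr(M(A)) = 0$ under the hypothesis $K_1(A) = 0$. The remaining task is to verify the $K_1$-vanishing. For~(2), since $A$ is stable we have $K_*(M(A)) = 0$ by \cite[Theorem~10.2]{Weg93KThyBook}, so the six-term sequence for $0 \to A \to M(A) \to M(A)/A \to 0$ yields $K_1(M(A)/A) \cong K_0(A) = 0$, and then $K_1(K) = 0$ as soon as one knows that $K \hookrightarrow M(A)/A$ induces an isomorphism on $K$-theory. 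For~(1), the same principle gives $K_1(J \cap A) = 0$, and the six-term sequence of $0 \to J \cap A \to J \to J/(J \cap A) \to 0$, combined with naturality against the full multiplier extension, identifies the index map $K_1(J/(J \cap A)) \to K_0(J \cap A)$ with the isomorphism $K_1(M(A)/A) \xrightarrow{\cong} K_0(A)$; injectivity then forces $K_1(J) = 0$.

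The main obstacle is precisely this $K$-theoretic input: that the inclusion of a hereditary sub-\ca{} into a simple, purely infinite \ca{} induces an isomorphism on $K_*$. For $\sigma$-unital hereditary sub-\ca{s} this follows from Brown's stable isomorphism theorem applied to the full corner; for general hereditary sub-\ca{s} I would argue by expressing them as directed unions of their $\sigma$-unital hereditary sub-\ca{s} and invoking continuity of $K$-theory, with compatibility of the Brown isomorphisms ensuring that the connecting maps in the resulting inductive limit are themselves isomorphisms.
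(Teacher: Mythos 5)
Your argument is correct, and its skeleton --- the unital/stable dichotomy, the fact that $M(A)/A$ is simple and purely infinite when $A$ is stable, and the extension $0 \to J\cap A \to J \to J/(J\cap A) \to 0$ combined with \cite[Proposition~5.11(2)]{Thi23arX:RRExt} and \autoref{prp:xrrOfExtension} for the upper bounds --- coincides with the paper's. The genuine divergence is the step $K_1(J)=0$ in part~(1) under $K_1(A)=0$: the paper obtains it in one line from Zhang's trivial-$K_1$-flow theorem \cite[Theorem~2.4]{Zha90TrivialK1Flow} (and gets $\rr(J)=0$ from \cite[Corollary~2.6(ii)]{Zha92RR0CoronaMultiplier1}), whereas you reconstruct it from naturality of the six-term sequence against the multiplier extension plus the fact that the inclusion of a hereditary sub-\ca{} of a simple \ca{} induces a $K_*$-isomorphism. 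The obstacle you flag there is real but is resolved exactly as you sketch: for a hereditary sub-\ca{} $D$ of a simple \ca{} $B$, the $\sigma$-unital hereditary sub-\ca{s} $\overline{aDa}$ with $a \in D_+$ form a directed family with dense union, satisfy $\overline{aDa}=\overline{aBa}$, and all connecting maps between them are $K_*$-isomorphisms by Brown's theorem, so continuity of $K$-theory gives that $K_*(D)\to K_*(B)$ is an isomorphism; note also that $J\cap A\neq 0$ whenever $J\neq 0$ because $A$ is essential in $M(A)$. The paper's route buys brevity (a single citation replaces your naturality argument); yours buys independence from Zhang's $K_1$-flow machinery at the cost of invoking Brown's stable isomorphism theorem. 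Two cosmetic remarks: \cite[Corollary~2.6(i)]{Zha92RR0CoronaMultiplier1} gives $\rr(M(A)/A)=0$ rather than simplicity and pure infiniteness of the corona, for which the paper cites \cite[Theorem~3.3]{Zha90RieszDecomp} and \cite[Theorem~1.3(a)]{Zha89StrProjIdlsCorona}; and for the $\xrr=0$ conclusions the paper applies \cite[Proposition~5.9(2)]{Thi23arX:RRExt} directly to the simple, purely infinite algebras in question, which is interchangeable with your use of \autoref{prp:xrr0SufficientCond}.
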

\begin{proof}
By \cite[Theorem~1.2]{Zha92RR0CoronaMultiplier1}, $A$ is either unital or stable.
In the first case, we have $M(A)=A$ and $M(A)/A=\{0\}$, and then~(2) clearly holds.
To verify~(1), let $J \subseteq M(A)=A$ be a hereditary sub-\ca{}.
The result is clear if $J=\{0\}$.
If $J$ is nonzero, then it is Morita equivalent to $A$ and therefore simple and purely infinite.
Then $\xrr(J) \leq 1$ by \cite[Proposition~5.11(2)]{Thi23arX:RRExt}.
If additionally $K_1(A)=0$, then also $K_1(J)=0$, and then $\xrr(J)=0$ by \cite[Proposition~5.9(2)]{Thi23arX:RRExt}.

We may therefore assume that $A$ is stable.
Then it follows from results of Zhang that $M(A)/A$ is simple and purely infinite.
Indeed, $M(A)/A$ is simple by \cite[Theorem~3.3]{Zha90RieszDecomp}.
Further, every hereditary sub-\ca{} of $M(A)/A$ contains an infinite projection by \cite[Theorem~1.3(a)]{Zha89StrProjIdlsCorona}, and it is known that this characterizes pure infiniteness for simple \ca{s};
see, fore example \cite[Proposition~V.2.3.3]{Bla06OpAlgs}.
See also \cite{Lin04SimpleCorona}.

To verify~(2), let $K \subseteq M(A)/A$ be a hereditary sub-\ca{}.
Without loss of generality, we may assume that $K$ is nonzero.
Then $K$ is Morita equivalent to $M(A)/A$, whence $K$ is simple and purely infinite, and then $\xrr(K) \leq 1$ by \cite[Proposition~5.11(2)]{Thi23arX:RRExt}.

Let us now additionally assume that $K_0(A)=0$.
Since $A$ is stable, we have $K_0(M(A))=K_1(M(A))=0$; 
see, for example, \cite[Theorem~10.2]{Weg93KThyBook}.
Using the six-term exact sequence in $K$-theory (\cite[Corollary~V.1.2.22]{Bla06OpAlgs}) at the second step, we get
\[
K_1(K) \cong K_1(M(A)/A) \cong K_0(A) = 0,
\]
and then $\xrr(K) = 0$ by \cite[Proposition~5.9(2)]{Thi23arX:RRExt}.

To verify~(1), let $J \subseteq M(A)$ be a hereditary sub-\ca{}.
We have the following inclusion of extensions:
\[
\xymatrix@R-15pt{
0 \ar[r] 
& A \ar[r] 
& M(A) \ar[r] 
& M(A)/A \ar[r] 
& 0 \\
0 \ar[r] 
& J \cap A \ar[r] \ar@{}[u]|{\subseteqRotatedUp}
& J \ar[r] \ar@{}[u]|{\subseteqRotatedUp}
& J/(J \cap A) \ar[r] \ar@{}[u]|{\subseteqRotatedUp}
& 0 \\
}
\]

Since $J\cap A \subseteq A$ and $J/( J\cap A) \subseteq M(A)/A$ are hereditary sub-\ca{s}, they are simple and purely infinite (or the zero algebra), and hence have extension real rank at most one by \cite[Proposition~5.11(2)]{Thi23arX:RRExt},
Using \autoref{prp:xrrOfExtension} at the first step, we get
\[
\xrr(J) 
\leq \max\big\{ \xrr(J \cap A), \xrr(J/(J \cap A)) \big\}
\leq 1.
\]

Let us now additionally assume that $K_1(A)=0$.
Then $M(A)$ has real rank zero by \cite[Corollary~2.6(ii)]{Zha92RR0CoronaMultiplier1}.
Since real rank zero passes to hereditary sub-\ca{s} by \cite[Corollary~2.8]{BroPed91CAlgRR0}, we get $\rr(J)=0$.
Further, we have $K_1(J)=0$ by \cite[Theorem~2.4]{Zha90TrivialK1Flow}.
Then $\xrr(J)=0$ by \autoref{prp:xrr0SufficientCond}.
\end{proof}

We stress that in the next result, we do not assume that $A$ is separable or $\sigma$-unital.

\begin{prp}
\label{prp:rrMultExtSimplePI}
Let $0 \to A \to E \to B \to 0$ be an extension of \ca{s}.
Assume that $E$ is $\sigma$-unital, and that $A$ is simple and purely infinite.
Then:
\begin{enumerate}
\item
We have
\[
\rr(M(B)) \leq \rr(M(E)) \leq \max\big\{1, \rr(M(B)) \big\}.
\]
If we additionally assume that $K_1(A)=0$, then
\[
\rr(M(E)) = \rr(M(B)),
\]
\item
We have
\[
\rr(M(B)/B) \leq \rr(M(E)/E) \leq \max\big\{1, \rr(M(B)/B) \big\}.
\]
If we additionally assume that $K_0(A)=0$, then 
\[
\rr(M(E)/E) = \rr(M(B)/B).
\]
\end{enumerate}
\end{prp}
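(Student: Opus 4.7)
The plan is to combine \autoref{prp:xrrHerMultPI} with \autoref{prp:rrMultUsingClub}, reducing the possibly nonseparable ideal $A$ to a club of well-behaved separable sub-\ca{s}.

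First, I would produce, inside $\Sep(A)$, a club $\mathcal{F}$ of separable sub-\ca{s} $A' \subseteq A$ that are themselves simple and purely infinite. Both simplicity and the condition that every nonzero hereditary sub-\ca{} contain an infinite projection are separably inheritable in Blackadar's sense (\cite[II.8.5]{Bla06OpAlgs}) and therefore satisfy the L\"owenheim-Skolem condition of \autoref{pgr:LS}; the intersection of the two associated clubs produces the desired $\mathcal{F}$. For every $A' \in \mathcal{F}$, which is automatically $\sigma$-unital, \autoref{prp:xrrHerMultPI} yields $\xrr(J) \leq 1$ for every hereditary sub-\ca{} $J \subseteq M(A')$ and $\xrr(K) \leq 1$ for every hereditary sub-\ca{} $K \subseteq M(A')/A'$. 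Plugging $n=1$ into \autoref{prp:rrMultUsingClub}(1) and~(2) delivers the two-sided estimates
\[
\rr(M(B)) \leq \rr(M(E)) \leq \max\bigl\{1,\rr(M(B))\bigr\},
\]
and the analogous estimate for the corona algebras, establishing the unconditional parts of~(1) and~(2).

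For the sharper equalities under $K_1(A)=0$ (respectively $K_0(A)=0$), I would refine the club. Vanishing of $K_1$ (or $K_0$) also satisfies the L\"owenheim-Skolem condition by continuity of $K$-theory: any class in $K_1(A')$ maps to $0$ in $K_1(A)=0$ and is therefore already trivial in some intermediate separable sub-\ca{}, and a standard countable-chain construction promotes this to a subclub $\mathcal{F}' \subseteq \mathcal{F}$ of $A'$ with additionally $K_1(A')=0$. The refined part of \autoref{prp:xrrHerMultPI}(1) improves the conclusion to $\xrr(J)=0$, and \autoref{prp:rrMultUsingClub}(1) now gives $\rr(M(E)) \leq \max\{0,\rr(M(B))\}=\rr(M(B))$, which combined with the reverse inequality completes~(1). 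The proof of the sharpened part of~(2) is identical, with $K_0$ replacing $K_1$ and \autoref{prp:xrrHerMultPI}(2), \autoref{prp:rrMultUsingClub}(2) replacing their counterparts.

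The main obstacle is the L\"owenheim-Skolem bookkeeping. The separable inheritance of simplicity, of pure infiniteness, and of the vanishing of $K_1$ or $K_0$ are each standard individually, but they must be combined into a single club; this uses the stability of clubs under countable intersection recalled in \autoref{pgr:LS}, together with continuity of $K$-theory. Once this is in place, the proof reduces to a direct invocation of the two previous lemmas.
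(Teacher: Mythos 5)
Your proposal is correct and follows essentially the same route as the paper: both build a club of separable, simple, purely infinite sub-\ca{s} of $A$ (refined by vanishing $K_1$ or $K_0$ for the sharper statements), apply \autoref{prp:xrrHerMultPI} to each member, and conclude via \autoref{prp:rrMultUsingClub}. The only difference is cosmetic: the paper cites \cite[Example~7.3.4]{Far19BookSetThyCAlg} and \cite[Paragraph~II.8.5.5]{Bla06OpAlgs} for the L\"owenheim-Skolem conditions where you sketch the standard arguments directly.
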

\begin{proof}
Since the property `simple and purely infinite' satisfies the L{\"o}wenheim-Skolem condition (\cite[Example~7.3.4]{Far19BookSetThyCAlg}), $A$ contains a club $\mathcal{F}$ of separable sub-\ca{s} of $A$ that are simple and purely infinite.
For each $A' \in \mathcal{F}$ and for all hereditary sub-\ca{s} $J \subseteq M(A')$ and $K \subseteq M(A')/A'$, we have $\xrr(A) \leq 1$ and $\xrr(K) \leq 1$ by \autoref{prp:xrrHerMultPI}.
Now the estimates for $\rr(M(E))$ and $\rr(M(E)/E)$ follow from \autoref{prp:rrMultUsingClub}.

Let us now additionally assume that $K_1(A)=0$.
Since the property `vanishing $K_1$-group' satisfies the L{\"o}wenheim-Skolem condition (\cite[Paragraph~II.8.5.5]{Bla06OpAlgs}), we obtain a club $\mathcal{F}_1$ of separable sub-\ca{s} of $A$ that are simple, purely infinite and have vanishing $K_1$-group.
For each $A' \in \mathcal{F}_1$ and every hereditary sub-\ca{} $J \subseteq M(A')$, we have $\xrr(A) \leq 0$ by \autoref{prp:xrrHerMultPI}.
Then $\rr(M(E))=\rr(M(B))$ by \autoref{prp:rrMultUsingClub}.

If instead we additionally assume that $K_0(A)=0$, then a similar argument (using that vanishing $K_0$-group satisfies the L{\"o}wenheim-Skolem condition) gives $\rr(M(E)/E) = \rr(M(B)/B)$.
\end{proof}

\begin{exa}
\label{exa:rrMultExtSimplePI-ExceptionalCase}
Let $\Calk$ denote the Calkin algebra, and consider the extension
\[
0 \to \Calk\otimes\Cpct \to M(\Calk\otimes\Cpct) \to M(\Calk\otimes\Cpct)/(\Calk\otimes\Cpct) \to 0.
\]
Then $\Calk\otimes\Cpct$ is simple and purely infinite.
Further, $M(\Calk\otimes\Cpct)$ and $M(\Calk\otimes\Cpct)/(\Calk\otimes\Cpct)$ are unital and therefore agree with their multiplier algebras.
By \autoref{exa:StableMultCalkin}, we have 
\[
\rr( M(\Calk\otimes\Cpct) ) = 1, \andSep
\rr( M(\Calk\otimes\Cpct)/(\Calk\otimes\Cpct) ) = 0.
\]

This shows that in \autoref{prp:rrMultExtSimplePI}(1), the exceptional case $\rr(M(B))=0$ and $\rr(M(E))=1$ can occur.
\end{exa}

\section{Extensions by certain simple C*-algebras with stable rank one}
\label{sec:ExtSimpleSR1}

In this section, we consider extensions
\[
0 \to A \to E \to B \to 0
\]
where $E$ is $\sigma$-unital, and $A$ is simple, with real rank zero, stable rank one, strict comparison of positive elements by traces, and finitely many extremal traces (normalized at some nonzero projection).
We solve \autoref{pbm:xrrHerMA} for~$A$ (\autoref{prp:xrrHerMultTraces}) and deduce in \autoref{prp:rrMultExtSimpleSR1} that
\[
\rr(M(B)) \leq \rr(M(E)) \leq \max\big\{ 1,\rr(M(B) \big\}.
\]

Using this, we compute in \autoref{prp:rrStableMultIIinf} the real rank of the stable multiplier and stable corona algebra of a countably decomposable type~\IIinf{} factor $N$ as
\[
\rr( M(N\otimes\Cpct) )
= \rr( M(N\otimes\Cpct)/(N\otimes\Cpct) )
= 1.
\]
Together with \autoref{prp:rrStableMultBdd} and results of Zhang \cite{Zha92RR0CoronaMultiplier1} this completes the computation of the real rank
for stable multiplier and corona algebras of countably decomposable factors;
see \autoref{prp:rrMultFactor}

\medskip

Following Kirchberg-R{\o}rdam \cite[Definition~4.1]{KirRor00PureInf}, we say that a \ca{}~$A$ is \emph{purely infinite} if $A$ has no characters and if for any $a,b \in A_+$ such that $a$ belongs to the ideal generated by $b$, then $a$ is Cuntz subequivalent to $b$.
It is known that \emph{simple}, purely infinite \ca{s} have real rank zero;
see \cite[Proposition~V.3.2.12]{Bla06OpAlgs}.
It is also known that non-simple, purely infinite \ca{s} need not have real rank zero;
see, for example, \cite{PasRor07PIRR0}.
The next result shows in particular, that purely infinite \ca{s} with finite primitive ideal space have real rank at most one.

\begin{lma}
\label{prp:xrrHerPIFinIdl}
Purely infinite \ca{s} with finite primitive ideal space have extension real rank $\leq 1$.
\end{lma}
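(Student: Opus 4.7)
The plan is to reduce the statement to the simple case via a composition-series argument, and then invoke the earlier results about the extension real rank of simple purely infinite \ca{s} together with the behavior of $\xrr$ under extensions.

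Since $\mathrm{Prim}(A)$ is finite, the lattice of closed two-sided ideals of $A$ (which is isomorphic to the lattice of open subsets of $\mathrm{Prim}(A)$) is finite. Consequently $A$ admits a maximal finite chain of ideals
\[
\{0\} = I_0 \subsetneq I_1 \subsetneq \cdots \subsetneq I_n = A,
\]
and by maximality every subquotient $I_k/I_{k-1}$ is \emph{simple}. Next I would invoke the permanence result of Kirchberg--R{\o}rdam \cite[Proposition~4.3]{KirRor00PureInf} that pure infiniteness passes to ideals and quotients: since $A$ is purely infinite, each subquotient $I_k/I_{k-1}$ is a simple, purely infinite \ca, and therefore $\xrr(I_k/I_{k-1}) \leq 1$ by \cite[Proposition~5.11(2)]{Thi23arX:RRExt}.

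Finally, I would induct on $k$ to show that $\xrr(I_k) \leq 1$ for each $k$, with the trivial base case $\xrr(I_0) = \xrr(\{0\}) \leq 1$. For the inductive step, apply \autoref{prp:xrrOfExtension} to the extension $0 \to I_{k-1} \to I_k \to I_k/I_{k-1} \to 0$ to obtain
\[
\xrr(I_k) \leq \max\big\{ \xrr(I_{k-1}), \xrr(I_k/I_{k-1}) \big\} \leq 1.
\]
Taking $k=n$ yields $\xrr(A) \leq 1$, as desired.

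The main step deserving attention is the existence of the maximal chain with simple subquotients and the descent of pure infiniteness to those subquotients; however, both are standard facts, so this should not present a serious obstacle. The rest of the argument is a direct iteration of the extension estimate \autoref{prp:xrrOfExtension} starting from the simple case \cite[Proposition~5.11(2)]{Thi23arX:RRExt}.
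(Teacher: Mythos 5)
Your proof is correct and follows essentially the same route as the paper: reduce to the simple case via the finite ideal lattice, use that pure infiniteness passes to ideals and quotients (the paper cites \cite[Theorem~4.19]{KirRor00PureInf} rather than Proposition~4.3), apply \cite[Proposition~5.11(2)]{Thi23arX:RRExt} to the simple subquotients, and iterate \autoref{prp:xrrOfExtension}. The only cosmetic difference is that you organize the induction along a composition series with simple subquotients, while the paper inducts on the number of primitive ideals by splitting off an arbitrary nontrivial proper ideal.
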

\begin{proof}
By induction over $n$, we show that the result holds for every purely infinite \ca{} whose primitive ideal space contains at most $n$ elements.
To show that case $n=1$, let $A$ be a purely infinite \ca{} with at most one primitive ideal.
Then $A$ is simple (and purely infinite), and therefore $\xrr(A) \leq 1$ by \cite[Proposition~5.11(2)]{Thi23arX:RRExt}.

Next, assume that the result holds for some $n$, and let $A$ be a purely infinite \ca{} whose primitve ideal space contains at most $n+1$ elements.
Let $I \subseteq A$ be any ideal with $I\neq\{0\}$ and $I \neq A$.
By \cite[Theorem~4.19]{KirRor00PureInf}, both $I$ and $A/I$ are purely infinite, and their primitive ideal spaces have at most $n$ elements.
By assumption of the induction, we get $\xrr(I) \leq 1$ and $\xrr(A) \leq 1$, and then $\xrr(A) \leq 1$ by \autoref{prp:xrrOfExtension}.
\end{proof}

A \ca{} $A$ is said to have \emph{strict comparison of positive elements by traces} if for any $a,b \in (A\otimes\Cpct)_+$ such that $d_\tau(a) \leq (1-\varepsilon)d_\tau(b)$ for some $\varepsilon>0$ and all lower semicontinuous, $[0,\infty]$-valued traces $\tau$ on $A$, then $a$ is Cuntz subequivalent to $b$.
By \cite[Remark~3.7]{NgRob16CommutatorsPureCa} and \cite[Theorem~6.2]{EllRobSan11Cone}, a \ca{} has strict comparison of positive elements by traces if and only if its Cuntz semigroup is almost unperforated and every quasitrace on $A$ is a trace.
(We refer to \cite{AntPerThi18TensorProdCu} and \cite{GarPer23arX:ModernCu} for details on the Cuntz semigroup.)

If $A$ is a simple \ca{} with real rank zero, stable rank one, and strict comparison of positive elements by traces, then $A$ necessarily admits nontrivial (lower semicontinuous, $[0,\infty]$-valued) traces, but there are possibly no bounded traces.
However, for any nonzero projections $p,q \in A$, the Choquet simplices of tracial states on $pAp$ and $qAq$ are canonically isomorphic. 
We will say that $A$ has \emph{finitely many extremal traces} if for any nonzero projection $p \in A$, the Choquet simplex of tracial states on $pAp$ has finitely many extreme points.

\begin{lma}
\label{prp:xrrHerMultTraces}
Let $A$ be a separable, simple \ca{} with real rank zero, stable rank one, strict comparison of positive elements by traces, and finitely many extremal traces (normalized at some nonzero projection).
Then:
\begin{enumerate}
\item
If $J \subseteq M(A)$ is a hereditary sub-\ca{}, then $\xrr(J) \leq 1$.
If we additionally assume that $K_1(A)=0$, then $\xrr(J)=0$.
\item
We have $\rr(K)=0$ and $\xrr(K) \leq 1$ for every hereditary sub-\ca{} $K \subseteq M(A)/A$.
\end{enumerate}
\end{lma}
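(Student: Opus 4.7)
The plan is to mirror the strategy of \autoref{prp:xrrHerMultPI}, substituting the ``simple purely infinite'' toolkit with the corresponding regularity theorems for the stably finite setting.

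For~(2), I first appeal to Lin's theorem \cite[Theorem~15]{Lin93ExpRank}, valid for separable simple \ca{s} with real rank zero and stable rank one, to conclude $\rr(M(A)/A) = 0$; then $\rr(K) = 0$ for every hereditary sub-\ca{} $K$ by \cite[Corollary~2.8]{BroPed91CAlgRR0}. For the bound $\xrr(K) \leq 1$, the key structural input is that $M(A)/A$ is purely infinite in the sense of Kirchberg-R{\o}rdam and has finite primitive ideal space: the hypotheses of strict comparison and finitely many extremal traces force the ideal lattice of $M(A)/A$ to be finite, corresponding to the finitely many closed faces of the normalized tracial state simplex, while each simple subquotient is purely infinite. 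Since hereditary sub-\ca{s} of purely infinite \ca{s} remain purely infinite (Kirchberg-R{\o}rdam), and are Morita equivalent to ideals of $M(A)/A$, they inherit the finite primitive ideal space, and the bound $\xrr(K) \leq 1$ follows from \autoref{prp:xrrHerPIFinIdl}.

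For~(1), I decompose the hereditary sub-\ca{} $J \subseteq M(A)$ via the extension
\[
0 \to J \cap A \to J \to J/(J \cap A) \to 0,
\]
where $J \cap A$ is hereditary in $A$ and, as one verifies directly from hereditariness, the image $\pi(J) \cong J/(J \cap A)$ is hereditary in $M(A)/A$. By~(2), $\xrr(J/(J \cap A)) \leq 1$. If $J \cap A$ is nonzero, then by simplicity of $A$ it is Morita equivalent to $A$ and inherits all the regularity properties; being separable, Lin's theorem gives $\rr(M(J \cap A)/(J \cap A)) = 0$, and then \autoref{prp:rrMult-rr0Corona} yields $\xrr(J \cap A) \leq \rr(J \cap A) + 1 = 1$. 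Applying \autoref{prp:xrrOfExtension} gives $\xrr(J) \leq 1$. For the improvement to $\xrr(J) = 0$ under $K_1(A) = 0$, I combine: (i) $\rr(M(A)) = 0$ from Lin \cite[Theorem~10]{Lin93ExpRank}, forcing $\rr(J) = 0$ by \cite[Corollary~2.8]{BroPed91CAlgRR0}; (ii) $K_1(J) = 0$, via a Zhang-type trivial $K_1$-flow result analogous to \cite[Theorem~2.4]{Zha90TrivialK1Flow}; and (iii) \autoref{prp:xrr0SufficientCond} to upgrade $\xrr(J) \leq 1$ to $\xrr(J) = 0$.

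The main obstacle will be the structural input underpinning~(2)---namely, that $M(A)/A$ is purely infinite in the Kirchberg-R{\o}rdam sense with finite primitive ideal space. Both the finite ideal lattice and the pure infiniteness of the simple subquotients rely crucially on finitely many extremal traces together with strict comparison, and establishing this cleanly will likely require invoking structural results of Kucerovsky-Ng or Perera-R{\o}rdam on coronas of simple \ca{s}. A secondary technical hurdle is locating the appropriate vanishing-$K_1$ statement for hereditary sub-\ca{s} of $M(A)$ in the stably finite regime needed in~(1).
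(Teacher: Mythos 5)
Your overall strategy coincides with the paper's: part~(2) is reduced to the statement that $M(A)/A$ is purely infinite in the Kirchberg--R{\o}rdam sense with finite primitive ideal space, followed by \autoref{prp:xrrHerPIFinIdl}; part~(1) uses the same extension $0 \to J\cap A \to J \to J/(J\cap A) \to 0$ together with \autoref{prp:xrrOfExtension}; and the upgrade under $K_1(A)=0$ goes through $\rr(J)=0$, $K_1(J)=0$ and \autoref{prp:xrr0SufficientCond}. Your minor variations are harmless and in places cleaner: using Lin's \cite[Theorem~15]{Lin93ExpRank} for $\rr(M(A)/A)=0$ and \cite[Theorem~10]{Lin93ExpRank} for $\rr(J)=0$ (the paper instead gets the former from \cite[Theorem~3.8]{Ng22RR0PICorona} and the latter from \cite[Proposition~4]{LinRor95ExtLimitCircle} applied to the subextension), and using \autoref{prp:rrMult-rr0Corona} for $\xrr(J\cap A)\leq 1$ where the paper quotes \cite[Proposition~5.11(1)]{Thi23arX:RRExt} directly.

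The genuine gap is exactly where you flag the ``main obstacle'': the assertion that $M(A)/A$ is purely infinite with finitely many ideals is not something that follows from a heuristic about closed faces of the trace simplex, and it is the substantive content of the paper's proof. What the paper actually does is verify that $A$ (in the nonunital, nonelementary case) has Ng's property $(\ast)$ --- nonunital, separable, simple, strict comparison, \emph{projection injectivity and surjectivity}, and \emph{quasicontinuous scale} --- using \cite[Theorem~4.5]{KafNgZha19PICorona} for projection injectivity/surjectivity, \cite[Definition~2.2]{KucPer11PICorona} for quasicontinuous scale (this is precisely where ``finitely many extremal traces'' enters), and \cite[Theorem~3.6]{NgRob16CommutatorsPureCa} to pass from quasitraces to traces; only then does \cite[Theorem~6.11]{KafNgZha19PICorona} deliver that $M(A)/A$ is purely infinite with finitely many ideals. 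Your proof needs this verification spelled out, and it must also treat the degenerate cases separately ($A$ unital, where $M(A)/A=\{0\}$, and $A$ elementary, where $M(A)/A$ is the Calkin algebra), since the corona structure theorems assume $A$ nonunital and nonelementary. The secondary hurdle you mention is resolved by \cite[Theorem~9]{Lin93ExpRank}, which gives $K_1(J)=0$ for hereditary sub-\ca{s} $J$ of $M(A)$ in this stably finite setting; with that citation in place, step~(ii) of your $K_1(A)=0$ argument closes.
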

\begin{proof}
For this proof, let us say that a \ca{} has property $(\ast\ast)$ if it satisfies that assumptions of this lemma, that is, if it is separable, simple, with real rank zero, stable rank one, strict comparison of positive elements by traces, and finitely many extremal traces (normalized at some nonzero projection).
In \cite{Ng22RR0PICorona}, a \ca{} is said to have property $(\ast)$ if it is nonunital, separable, simple, with strict comparison of positive elements by traces, projection injectivity and surjectivity, and quasicontinuous scale.

\smallskip

\textbf{Claim~1:} \textit{Every nonunital, nonelementary \ca{} with $(\ast\ast)$ has $(\ast)$.}
To prove the claim, we first note that every $\sigma$-unital, simple, nonunital, nonelementary \ca{} with real rank zero, stable rank one, and strict comparison of positive element by traces has projection injectivity and surjectivity by \cite[Theorem~4.5]{KafNgZha19PICorona}.
Essentially by definition (\cite[Definition~2.2]{KucPer11PICorona}), every simple \ca{} with real rank zero and with at most finitely many extremal quasitraces (normalized at some nonzero projection) has quasi-continuous scale.
Further, if a \ca{} has strict comparison of positive elements by traces, then every lower-semicontinuous quasitrace is a trace (\cite[Theorem~3.6]{NgRob16CommutatorsPureCa}).
The claim is proved by combining these results.

\smallskip

\textbf{Claim~2:} \textit{Let $B$ be a \ca{} with $(\ast)$, and let $K \subseteq M(B)/B$ be a hereditary sub-\ca{}.
Then $\rr(K)=0$ and $\xrr(K) \leq 1$.}
First, we deduce that $\rr(K)=0$ using that $\rr(M(B)/B)=0$ by \cite[Theorem~3.8]{Ng22RR0PICorona} and that real rank zero passes to hereditary sub-\ca{s} by \cite[Corollary~2.8]{BroPed91CAlgRR0}.
Further, $M(B)/B$ is purely infinite and has finitely many ideals by \cite[Theorem~6.11]{KafNgZha19PICorona};
see also \cite[Theorem~2.1]{Ng22RR0PICorona}.
This implies that $M(B)/B$ has finite primitive ideal space.
Since pure infiniteness passes to hereditary sub-\ca{s} by \cite[Proposition~4.17]{KirRor00PureInf}, and since the primitive ideal space of $K$ is naturally isomorphic to a subset of the primitive ideal space of $M(B)/B$, we obtain $\xrr(K) \leq 1$ by \autoref{prp:xrrHerPIFinIdl}.
This proves the claim.

\smallskip

To verify~(2), let $K \subseteq M(A)/A$ by a hereditary sub-\ca{}.
If $A$ is unital, then $M(A)/A=\{0\}$ and the result is clear.
If $A$ is elementary, then $M(A)/A$ is the Calkin algebra, and it follows that $K$ is purely infinite and simple, and therefore $\rr(K)=0$ by \cite[Proposition~V.3.2.12]{Bla06OpAlgs}, and $\xrr(K) \leq 1$ by \cite[Proposition~5.11(2)]{Thi23arX:RRExt}.
Finally, if $A$ is non-unital and non-elementary, then it follows from Claim~1 that $A$ has $(\ast)$, and the result follows from Claim~2.

\medskip

To verify~(1), let $J \subseteq M(A)$ be a hereditary sub-\ca.
Set $K := J/(J \cap A)$.
We have the following inclusion of extensions:
\[
\xymatrix@R-15pt{
0 \ar[r] 
& A \ar[r] 
& M(A) \ar[r] 
& M(A)/A \ar[r] 
& 0 \\
0 \ar[r] 
& J \cap A \ar[r] \ar@{}[u]|{\subseteqRotatedUp}
& J \ar[r] \ar@{}[u]|{\subseteqRotatedUp}
& K \ar[r] \ar@{}[u]|{\subseteqRotatedUp}
& 0 \\
}
\]

Since $K \subseteq M(A)/A$ is hereditary, we have $\rr(K)=0$ and $\xrr(K) \leq 1$ by~(2).
Further, since $J\cap A \subseteq A$ is hereditary, and since the properties entering the definition of $(\ast\ast)$ each pass to hereditary sub-\ca{s}, it follows that $J \cap A$ also has~$(\ast\ast)$.
In particular, $J \cap A$ is simple with real rank zero and stable rank one, and thus $\xrr(J \cap A) \leq 1$ by \cite[Proposition~5.11(1)]{Thi23arX:RRExt}.
Using \autoref{prp:xrrOfExtension} at the first step, we get
\[
\xrr(J) 
\leq \max\big\{ \xrr(J \cap A), \xrr(J/(J \cap A)) \big\}
\leq 1.
\]

Next, let us additionally assume that $K_1(A)=0$.
Since $J \cap A$ is Morita equivalent to $A$, we get $K_1(J \cap A)=0$.
Using that $J \cap A$ and $K$ both have real rank zero, we deduce that $\rr(J)=0$ by \cite[Proposition~4]{LinRor95ExtLimitCircle};
see also \cite[Proposition~2.5]{Thi23arX:RRExt}.
Further, we have $K_1(J)=0$ by \cite[Theorem~9]{Lin93ExpRank}.
Then $\xrr(J)=0$ by \autoref{prp:xrr0SufficientCond}.
\end{proof}

In the next result, we do not assume that $A$ is separable or $\sigma$-unital.
The special case $A=\Cpct$ was already considered in \autoref{sec:ExtCpct}.

\begin{prp}
\label{prp:rrMultExtSimpleSR1}
Let $0 \to A \to E \to B \to 0$ be an extension of \ca{s}.
Assume that $E$ is $\sigma$-unital, and that $A$ is simple, with real rank zero, stable rank one, strict comparison of positive elements by traces, and finitely many extremal traces (normalized at some nonzero projection).
Then:
\begin{enumerate}
\item
We have
\[
\rr(M(B)) \leq \rr(M(E)) \leq \max\big\{1, \rr(M(B)) \big\}.
\]
If we additionally assume that $K_1(A)=0$, then
\[
\rr(M(E)) = \rr(M(B)).
\]
\item
We have
\[
\rr(M(B)/B) \leq \rr(M(E)/E) \leq \max\big\{1, \rr(M(B)/B) \big\}.
\]
\end{enumerate}
\end{prp}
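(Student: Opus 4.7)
The plan is to mirror the proof of \autoref{prp:rrMultExtSimplePI}: reduce to the separable case via a L\"owenheim-Skolem argument, apply \autoref{prp:xrrHerMultTraces} to bound the extension real rank of hereditary sub-\ca{s} of $M(A')$ and $M(A')/A'$ for separable $A'$ with the hypotheses, and then invoke \autoref{prp:rrMultUsingClub}. The lower bounds $\rr(M(B)) \leq \rr(M(E))$ and $\rr(M(B)/B) \leq \rr(M(E)/E)$ follow from the surjectivity of $M(E) \to M(B)$ (via \cite[Theorem~10]{Ped86SAW}) together with \autoref{prp:EstimateBasic}, so the content is the upper bounds.

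The key input, and what I expect to be the main technical point, is to show that the hypotheses on $A$ pass to a club of separable sub-\ca{s}. Each individual condition---simplicity, real rank zero, stable rank one---is well-known to satisfy the L\"owenheim-Skolem condition (see \cite[Paragraph~II.8.5.5]{Bla06OpAlgs} and the axiomatizability results in \cite[Chapter~7]{Far19BookSetThyCAlg}). For strict comparison of positive elements by traces, one uses the characterization via almost unperforation of the Cuntz semigroup and the property that every quasitrace is a trace, both of which can be phrased as L\"owenheim-Skolem-type conditions. For the condition that there are only finitely many extremal traces (normalized at some projection), one argues directly: fix a nonzero projection $p_0 \in A$ realizing finitely many extremal traces on $p_0 A p_0$, and require the separable sub-\ca{} $A' \subseteq A$ to contain $p_0$; then the tracial state space of $p_0 A' p_0$ surjects onto (a sub-simplex of) that of $p_0 A p_0$ via restriction, and finiteness of extreme points is inherited. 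Taking the countable intersection of these clubs gives a club $\mathcal{F}$ of separable sub-\ca{s} of $A$ each satisfying all the hypotheses.

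For $A' \in \mathcal{F}$, \autoref{prp:xrrHerMultTraces} gives $\xrr(J) \leq 1$ for every hereditary sub-\ca{} $J \subseteq M(A')$ and $\xrr(K) \leq 1$ for every hereditary sub-\ca{} $K \subseteq M(A')/A'$. Applying \autoref{prp:rrMultUsingClub} with $n = 1$ yields
\[
\rr(M(E)) \leq \max\{1, \rr(M(B))\}, \andSep
\rr(M(E)/E) \leq \max\{1, \rr(M(B)/B)\},
\]
proving the upper bounds in (1) and (2).

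For the equality in~(1) under the additional assumption $K_1(A)=0$, I add vanishing $K_1$-group to the list of properties. This condition satisfies the L\"owenheim-Skolem condition (\cite[Paragraph~II.8.5.5]{Bla06OpAlgs}), so the intersection with $\mathcal{F}$ is again a club $\mathcal{F}_1$. For each $A' \in \mathcal{F}_1$, \autoref{prp:xrrHerMultTraces}(1) now yields $\xrr(J) = 0$ for every hereditary sub-\ca{} $J \subseteq M(A')$, and \autoref{prp:rrMultUsingClub} with $n=0$ gives $\rr(M(E)) \leq \rr(M(B))$, which combined with the lower bound gives equality. Note that the analogous equality for (2) is not claimed because \autoref{prp:xrrHerMultTraces}(2) does not provide a corresponding vanishing-$K$-theory refinement.
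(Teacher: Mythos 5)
Your overall architecture is the same as the paper's (club of separable sub-\ca{s} with the hypotheses, then \autoref{prp:xrrHerMultTraces} plus \autoref{prp:rrMultUsingClub}, with `vanishing $K_1$' added for the equality in~(1)), but there is a genuine gap at the technical heart of the argument: the verification that the separable sub-\ca{s} with the stated properties form a club. Your treatment of the condition `finitely many extremal traces' is wrong. You claim that if $A' \subseteq A$ is a separable sub-\ca{} containing a fixed projection $p_0$, then finiteness of the extreme boundary of the trace simplex is ``inherited'' via restriction. Restriction goes the wrong way for this: it maps tracial states on $p_0Ap_0$ to tracial states on $p_0A'p_0$, and this map is in general neither surjective nor injective; a tracial state on the subalgebra need not extend to the ambient algebra, and $T(p_0A'p_0)$ can be vastly larger than $T(p_0Ap_0)$. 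For instance, a UHF algebra has a unique tracial state but contains separable sub-\ca{s} (even simple ones with real rank zero and stable rank one) whose trace simplices have infinitely many extreme points. So merely requiring $p_0 \in A'$ does not produce a cofinal family with the trace condition, and this is precisely the point where real work is needed: the paper fixes $n$ with $A$ having at most $n$ extremal traces, passes to a separable $C$ with real rank zero to obtain an approximate unit of projections $(p_k)_k$, and then builds the separable subalgebra as an increasing union of unital sub-\ca{s} $D_k \subseteq p_kAp_k$ using a L{\"o}wenheim--Skolem argument for the property `at most $n$ extremal tracial states' (adapted from the `unique tracial state' case in \cite[Paragraph~II.8.5.5]{Bla06OpAlgs}), together with the L{\"o}wenheim--Skolem conditions for `simple', `real rank zero', `stable rank one', and `strict comparison' (the last via \cite[Theorem~8.2.2]{FarHarLupRobTikVigWin21ModelThy}).

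A related, smaller issue: for $\sigma$-completeness you cannot take the family of separable sub-\ca{s} with ``finitely many'' extremal traces, since that condition is not preserved under countable increasing unions; you need the uniform bound `at most $n$ extremal traces' fixed in advance (which is preserved under inductive limits). Everything after the club is constructed --- the application of \autoref{prp:xrrHerMultTraces}, of \autoref{prp:rrMultUsingClub} with $n=1$ (resp.\ $n=0$ in the $K_1(A)=0$ case), and the lower bounds via surjectivity of $M(E)\to M(B)$ --- matches the paper and is fine, as is your observation that no $K$-theoretic refinement is available for part~(2).
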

\begin{proof}
Pick $n$ such that $A$ has at most $n$ extremal traces (normalized at some nonzero projection).
The proof is similar to that of \autoref{prp:rrMultExtSimplePI}.
Let $\mathcal{F}$ denote the collection of separable sub-\ca{s} of $A$ that are simple, have real rank zero, stable rank one, strict comparison of positive elements by traces, and that have at most $n$ extremal traces (normalized at some nonzero projection).
Since each of the considered properties passes to inductive limits, we see that $\mathcal{F}$ is $\sigma$-complete.

To show that $\mathcal{F}$ is cofinal, let $B \subseteq A$ be a separable sub-\ca.
Since `real rank zero' satisfies the L{\"o}wenheim-Skolem condition (\cite[Paragraph~II.8.5.5]{Bla06OpAlgs}), we find a separable sub-\ca{} $C \subseteq A$ such that $B \subseteq C$ and $\rr(C)=0$.
Choose an increasing sequence of projections $(p_n)_n$ in $C$ that form an approximate unit.
Then the unital corners $p_nCp_n$ form an increasing sequence whose union is dense in $C$.

Consider $p_1Ap_1$, which is a unital, simple \ca{} with real rank zero, stable rank one, strict comparison of positive elements by traces, and at most $n$ extremal tracial states.
In \cite[Paragraph~II.8.5.5]{Bla06OpAlgs} it is shown that the property `unique tracial state' satisfies the L{\"o}wenheim-Skolem condition for unital \ca{s}.
The proof is easily adapted to show that `at most $n$ extremal tracial states' satisfies the L{\"o}wenheim-Skolem condition for unital \ca{s}.
Using also that the L{\"o}wenheim-Skolem condition is satisfied for the properties `simple' (\cite[Theorem~II.8.5.6]{Bla06OpAlgs}), for `real rank zero' and `stable rank one' (\cite[Paragraph~II.8.5.5]{Bla06OpAlgs}), and for `strict comparison of positive elements by traces' (\cite[Theorem~8.2.2]{FarHarLupRobTikVigWin21ModelThy}), we find a separable, unital, simple sub-\ca{} $D_1 \subseteq p_1Ap_1$ that has real rank zero, stable rank one, strict comparison of positive elements by traces, and at most $n$ extremal tracial states, and such that $p_1Bp_1 \subseteq D_1$.

Using the same argument successively in each $p_nAp_n$, we obtain a sequence $(D_n)_n$ such that $D_n$ is a separable, unital, simple sub-\ca{} of $p_nAp_n$ that has real rank zero, stable rank one, strict comparison of positive elements by traces, and at most $n$ extremal tracial states, and such that $D_{n-1} \subseteq D_n$ and $p_nCp_n \subseteq D_n$ for each $n \geq 2$.
Set $D:= \overline{\bigcup_n D_n}$.
Then $D$ belongs to $\mathcal{F}$ and contains $A$, as desired.
The situation is shown in the following diagram:
\[
\xymatrix@R-15pt{
p_1Ap_1 \ar@{}[r]|{\subseteq}
& p_2Ap_2 \ar@{}[r]|{\subseteq}
& p_3Ap_3 \ar@{}[r]|{\subseteq}
& \cdots \ar@{}[r]|{\subseteq}
& A \\
D_1 \ar@{}[r]|{\subseteq} \ar@{}[u]|{\subseteqRotatedUp}
& D_2 \ar@{}[r]|{\subseteq} \ar@{}[u]|{\subseteqRotatedUp}
& D_3 \ar@{}[r]|{\subseteq} \ar@{}[u]|{\subseteqRotatedUp}
& \cdots \ar@{}[r]|{\subseteq}
& D \ar@{}[r]|-{=} \ar@{}[u]|{\subseteqRotatedUp}
& \overline{\bigcup_n D_n} \\
p_1Cp_1 \ar@{}[r]|{\subseteq} \ar@{}[u]|{\subseteqRotatedUp}
& p_2Cp_2 \ar@{}[r]|{\subseteq} \ar@{}[u]|{\subseteqRotatedUp}
& p_3Cp_3 \ar@{}[r]|{\subseteq} \ar@{}[u]|{\subseteqRotatedUp}
& \cdots \ar@{}[r]|{\subseteq}
& C \ar@{}[r]|-{=} \ar@{}[u]|{\subseteqRotatedUp}
& \overline{\bigcup_n p_nCp_n}.
}
\]

Now the first estimate for $\rr(M(E))$ and the estimate for $\rr(M(E)/E)$ follow from combining \autoref{prp:rrMultUsingClub} with \autoref{prp:xrrHerMultTraces}, as in the proof of \autoref{prp:rrMultExtSimplePI}.
If we additionally assume that $K_1(A)=0$, then we use that `vanishing $K_1$-group' satisfies the L{\"o}wenheim-Skolem condition (\cite[Paragraph~II.8.5.5]{Bla06OpAlgs}) to obtain the improved result for $\rr(M(E))$ from \autoref{prp:rrMultUsingClub} and \autoref{prp:xrrHerMultTraces}.
\end{proof}

Comparing \autoref{prp:rrMultExtSimplePI} with \autoref{prp:rrMultExtSimpleSR1} (or \autoref{prp:xrrHerMultPI} with \autoref{prp:xrrHerMultTraces}), the following question naturally arises:

\begin{qst}
Let $A$ be a separable, simple \ca{} with real rank zero, stable rank one, strict comparison of positive elements by traces, and finitely many extremal traces (normalized at some nonzero projection).
Assume that $K_0(A)=0$.
Let $K \subseteq M(A)/A$ be a hereditary sub-\ca{}.
Do we have $K_1(K)=0$?
Do we have $\xrr(K)=0$?
\end{qst}

\begin{exa}
Consider an extension
\[
0 \to A \to E \to B \to 0
\]
where $A$ is a UHF-algebra (for example, the CAR algebra $M_{2^\infty}$), and $E$ is $\sigma$-unital.
Then
\[
\rr(M(E)) = \rr(M(B)).
\]
\end{exa}

\begin{thm}
\label{prp:rrStableMultIIinf}
Let $N$ be a countably decomposable \IIinf~factor.
Then
\[
\xrr(N\otimes\Cpct) = 0, \andSep
\rr( M(N\otimes\Cpct) )
= \rr( M(N\otimes\Cpct)/N\otimes\Cpct ) 
= 1.
\]
\end{thm}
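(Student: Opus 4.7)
The plan is to realize $N\otimes\Cpct$ as the middle term of an extension whose ideal is a stabilized \IIone{} factor and whose quotient is a stabilized unital, simple, purely infinite \ca{} with nonzero $K_1$; once this is set up, the result follows from combining \autoref{prp:rrMultExtSimpleSR1} with \autoref{prp:rrMultStableSimple}. Since $N$ is countably decomposable and properly infinite, I would pick a finite projection $p\in N$ with $M:=pNp$ a \IIone{} factor, yielding $N \cong M\bar\otimes\Bdd$. The norm-closed ideal $J\subseteq N$ generated by the finite projections is then isomorphic to $M\otimes\Cpct$, and by the classification of norm-closed ideals in factors (\cite[Proposition~III.1.7.11]{Bla06OpAlgs}) the quotient $C := N/J$ is unital and simple. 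Tensoring by $\Cpct$ (and using nuclearity together with $\Cpct\otimes\Cpct\cong\Cpct$) yields the extension
\[
0\to M\otimes\Cpct \to N\otimes\Cpct \to C\otimes\Cpct \to 0.
\]

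The ideal $A := M\otimes\Cpct$ is simple with real rank zero, stable rank one, strict comparison via the unique trace on $M$, a single extremal trace (normalized at $p\otimes q$ for a rank-one projection $q\in\Cpct$), and $K_1(A)=K_1(M)=0$. Applying \autoref{prp:rrMultExtSimpleSR1}(1) thus gives $\rr(M(N\otimes\Cpct)) = \rr(M(C\otimes\Cpct))$. The remaining task is to show that $C$ is unital, simple, purely infinite, with $K_1(C)\neq 0$; granted this, \autoref{prp:rrMultStableSimple} yields $\rr(M(C\otimes\Cpct))=1$, hence $\rr(M(N\otimes\Cpct))=1$. The computation of $K_1(C)$ is straightforward from the six-term exact sequence in $K$-theory for $0\to J\to N\to C\to 0$: using $K_\ast(N)=0$ (which holds for countably decomposable properly infinite factors, since the absorbing class $[1]$ in $V(N)$ collapses the Grothendieck group), $K_1(J)=K_1(M)=0$, and $K_0(J)\cong K_0(M)\cong\mathbb{R}$, one reads off $K_1(C)\cong\mathbb{R}\neq 0$.

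For the remaining assertions, $\xrr(N\otimes\Cpct)\leq \rr(M(N\otimes\Cpct))=1$ by \cite[Proposition~3.11]{Thi23arX:RRExt}, and since $N\otimes\Cpct$ has real rank zero (as a stabilized von Neumann algebra) and vanishing $K_1$, \autoref{prp:xrr0SufficientCond} upgrades this to $\xrr(N\otimes\Cpct)=0$. For the corona algebra, the lower bound $\rr(M(N\otimes\Cpct)/(N\otimes\Cpct))\geq 1$ follows from $\xrr(N\otimes\Cpct)=0$ combined with \autoref{prp:EstimateWithXRR} applied to $0\to N\otimes\Cpct \to M(N\otimes\Cpct) \to M(N\otimes\Cpct)/(N\otimes\Cpct) \to 0$, while the matching upper bound comes from \autoref{prp:rrMultExtSimpleSR1}(2) together with $\rr(M(C\otimes\Cpct)/(C\otimes\Cpct))=0$ (by \autoref{prp:rrMultStableSimple}, since $C$ is unital, simple, purely infinite).

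I expect the main obstacle to be verifying that $C=N/J$ is purely infinite. Every nonzero projection in $C$ lifts to a projection in $N$ not contained in $J$, which is therefore infinite in $N$ and, by factoriality, properly infinite; this proper infiniteness transfers to its image in $C$. Pure infiniteness of $C$ then reduces to showing that $C$ has real rank zero (so that every nonzero hereditary sub-\ca{} of $C$ contains a nonzero projection, which is then automatically properly infinite by the previous observation). Real rank zero of $C$ should be accessible via the techniques of Zhang on corona-type algebras (\cite{Zha92RR0CoronaMultiplier1}).
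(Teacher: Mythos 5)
Your overall strategy is the same as the paper's: realize $N\otimes\Cpct$ as an extension whose ideal is (stably) a \IIone{} factor and whose quotient is the stabilization of a unital, simple, purely infinite \ca{} with $K_1\cong\mathbb{R}$, then combine \autoref{prp:rrMultExtSimpleSR1} with \autoref{prp:rrMultStableSimple}, and finish via \autoref{prp:xrr0SufficientCond} and \autoref{prp:EstimateWithXRR}. There is, however, a genuine error in your setup: the norm-closed ideal $J\subseteq N$ generated by the finite projections is \emph{not} isomorphic to $M\otimes\Cpct$; indeed, $M\otimes\Cpct$ is not even an ideal of $N\cong M\,\bar\otimes\,\Bdd$. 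Concretely, choose pairwise orthogonal nonzero projections $q_i\in M$ with $\sum_i q_i=1$ and partial isometries $v_i\in M$ with $v_i^*v_i=q_i$ and pairwise orthogonal range projections; then $x:=\sum_i v_i\otimes e_{i1}$ satisfies $x^*x=1\otimes e_{11}$, so $x=x(1\otimes e_{11})$ lies in $J$ (and in the ideal generated by $M\otimes\Cpct$), yet $\operatorname{dist}\bigl(x, M\otimes M_n(\mathbb{C})\bigr)=\bigl\|\sum_{i>n}v_i\otimes e_{i1}\bigr\|=1$ for every $n$, so $x\notin M\otimes\Cpct$. Hence the extension $0\to M\otimes\Cpct\to N\otimes\Cpct\to C\otimes\Cpct\to 0$ you write down does not exist. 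The correct ideal is $J=\{x\in N: d(x^*x)<\infty\}$, which is not $\sigma$-unital and is only \emph{Morita equivalent} to the \IIone{} factor $pNp$ (a full corner). This is repairable -- and is precisely how the paper argues: \autoref{prp:rrMultExtSimpleSR1} is deliberately stated without separability or $\sigma$-unitality hypotheses on the ideal, and the required properties of $J\otimes\Cpct$ (simplicity, real rank zero, stable rank one, strict comparison, a unique trace normalized at $p$, vanishing $K_1$) are deduced from those of $pNp$ by Morita invariance rather than from a false identification with $M\otimes\Cpct$.

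A second, smaller point: what you flag as the ``main obstacle'' -- real rank zero of $C=N/J$ -- is immediate and needs none of Zhang's corona machinery. Since $N$ is a von Neumann algebra it has real rank zero, and real rank does not increase under passing to quotients (\autoref{prp:EstimateBasic}); combined with your correct observation that every nonzero projection of $C$ lifts to an infinite, hence properly infinite, projection of $N$, this yields that $C$ is simple and purely infinite, exactly as in the paper. The remainder of your argument (the six-term computation $K_1(C)\cong\mathbb{R}$, the deduction $\xrr(N\otimes\Cpct)=0$ from \autoref{prp:xrr0SufficientCond}, and the two-sided estimate for the corona algebra) agrees with the paper's proof.
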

\begin{proof} 
Let $p\in N$ be a projection such that $N_0:=pNp$ is a type~\IIone~factor, and let $d\colon N_+\to[0,\infty]$ denote the unique dimension function on~$N$ with $d(p)=1$.
Since~$N$ is countably decomposable, it follows from \cite[Proposition~III.1.7.11]{Bla06OpAlgs} that $N$ contains a unique nontrivial (norm-closed) ideal $J$ given by
\[
J = \big\{ x\in N : d(x^*x)<\infty \big\}.
\]

Set $B := N/J$.
Since~$J$ is a maximal ideal in $N$, the quotient $B$ is simple.
Since~$N$ has real rank zero, so does $B$, and every projection in $B$ lifts to a projection in $N$.
For~$J$ contains every finite projection in $N$, we deduce that every (nonzero) projection in $B$ lifts to a properly infinite projection and is therefore properly infinite itself.
This implies that $B$ is simple and purely infinite.

Next, we show that $K_1(B) \neq 0$.
Consider the six-term exact sequence in $K$-theory induced by the extension $0 \to J \to N \to B \to 0$.
Since $J$ is Morita equivalent to the \IIone~factor $N_0$, we have $K_0(J) \cong K_0(N_0) \cong \mathbb{R}$.
Further, since $N$ is a \IIinf~factor, we have $K_0(N) = K_1(N) = 0$.
It follows that $K_1(B) \cong \mathbb{R}$.
\[
\xymatrix@R-15pt{
\mathbb{R} \ar@{}[r]|-{\cong} 
& K_0(J) \ar[r] 
& K_0(N) \ar[r] 
& K_0(B) \ar[dd] \\
& & 0 \ar@{}[u]|{\eqRotated} \ar@{}[d]|{\eqRotated}  \\
& K_1(B) \ar[uu] 
& K_1(N) \ar[l] 
& K_1(Q) \ar[l]
}
\]
Thus, by \autoref{prp:rrMultStableSimple}, we have
\[
\rr( M(B\otimes\Cpct) )
= 1.
\]

Now consider the extension
\[
0 \to J\otimes\Cpct \to N\otimes\Cpct \to B\otimes\Cpct \to 0.
\]
Then $N\otimes\Cpct$ is $\sigma$-unital.
Since $J\otimes\Cpct$ is Morita equivalent to the \IIone~factor $N_0$, we see that $J\otimes\Cpct$ is simple, has real rank zero, stable rank one, strict comparison of positive elements by traces, a unique trace normalized at $p$, and that $K_1(J\otimes\Cpct)=0$.
Therefore, \autoref{prp:rrMultExtSimpleSR1} applies and we obtain that
\[
\rr( M(N\otimes\Cpct) ) 
= \rr( M(B\otimes\Cpct) )
= 1,
\]

Arguing as in the proof of \autoref{prp:rrStableMultBdd}, we next deduce that $\xrr(N\otimes\Cpct)=0$.
Then, by applying \autoref{prp:EstimateWithXRR} for the extension
\[
0 \to N\otimes\Cpct \to M(N\otimes\Cpct) \to M(N\otimes\Cpct)/(N\otimes\Cpct) \to 0,
\]
we get
\[
\rr( M(N\otimes\Cpct)/(N\otimes\Cpct) ) 
= \rr( M(N\otimes\Cpct) )
= 1. \qedhere
\]
\end{proof}

\begin{rmk}
\label{rmk:VNA}
We have computed the real ranks of stable multiplier and stable corona algebras of countably decomposable von Neumann factors.
What can we say about the analogous problem for general von Neumann algebras?

If $N$ is a finite von Neumann algebra, then $N$ has real rank zero, stable rank one, and trivial $K_1$-group, and then it follows from Lin's theorem \cite[Theorem~10]{Lin93ExpRank} that $M(N\otimes\Cpct)$ has real rank zero.
We expect that $M(N\otimes\Cpct)$ also has real rank zero if $N$ is a von Neumann algebra of type~\III.

On the other hand, if $N$ contains an ideal $I$ such that $N/I$ is a countably decomposable factor of type~\Iinf{} or type~\IIinf, then
\[
\rr( M(N\otimes\Cpct) ) 
\geq \rr( M((N/I)\otimes\Cpct) ) 
= 1.
\]
We suspect that this holds more generally whenever $N$ has a nonzero summand of type~\Iinf{} or type~\IIinf.
To complete the speculation, we expect that for every von Neumann algebra~$N$, we have
\[
\rr(M(N\otimes\Cpct)) = \begin{cases}
0, & \text{ if $N$ has no nonzero summands of type~\Iinf{} or type~\IIinf}, \\
1, & \text{ otherwise},
\end{cases}
\]
With view towards \autoref{prp:xrr0SufficientCond}, this would imply that $\xrr(N\otimes\Cpct)=0$ for every von Neumann algebra~$N$.
\end{rmk}


\providecommand{\etalchar}[1]{$^{#1}$}
\providecommand{\bysame}{\leavevmode\hbox to3em{\hrulefill}\thinspace}
\providecommand{\noopsort}[1]{}
\providecommand{\mr}[1]{\href{http://www.ams.org/mathscinet-getitem?mr=#1}{MR~#1}}
\providecommand{\zbl}[1]{\href{http://www.zentralblatt-math.org/zmath/en/search/?q=an:#1}{Zbl~#1}}
\providecommand{\jfm}[1]{\href{http://www.emis.de/cgi-bin/JFM-item?#1}{JFM~#1}}
\providecommand{\arxiv}[1]{\href{http://www.arxiv.org/abs/#1}{arXiv~#1}}
\providecommand{\doi}[1]{\url{http://dx.doi.org/#1}}
\providecommand{\MR}{\relax\ifhmode\unskip\space\fi MR }
\providecommand{\MRhref}[2]{%
  \href{http://www.ams.org/mathscinet-getitem?mr=#1}{#2}
}
\providecommand{\href}[2]{#2}

\end{document}